\def\cB{\mathcal B}
\def\cC{\mathcal C}
\def\cK{\mathcal K}
\def\cL{\mathcal L}
\def\cN{\mathcal N}
\def\t{\tilde}
\def\cQ{\mathcal Q}
\def\cV{\mathcal V}
\def\cX{\mathcal X}
\def\cY{\mathcal Y}
\def\sF{\mathscr F}
\def\f{\frac}
\def\sV{\mathscr V}
\def\N{\mathop{\mathbb N\kern 0pt}\nolimits}
\def\Z{\mathop{\mathbb Z\kern 0pt}\nolimits}
\def\Q{\mathop{\mathbb Q\kern 0pt}\nolimits}
\def\R{\mathop{\mathbb R\kern 0pt}\nolimits}
\def\T{\mathop{\mathbb T\kern 0pt}\nolimits}
\def\C{\mathop{\mathbb C\kern 0pt}\nolimits}
\def\ds{\displaystyle}
\def\supp{\mathop{\rm supp}\nolimits}
\def\p{\partial}
\def\eps{\epsilon}
\def\ve{\varepsilon}
\def\ls{\lesssim}
\def\gt{\gtrsim}
\def\med{\mathop{\rm med}\nolimits}
\newcommand{\w}[1]{\langle {#1} \rangle}
\def\id{\textbf{1}}
\theoremstyle{plain}
\newtheorem{theorem}{Theorem}[section]
\newtheorem{proposition}[theorem]{Proposition}
\newtheorem{lemma}[theorem]{Lemma}
\newtheorem{corollary}[theorem]{Corollary}
\theoremstyle{definition}
\newtheorem{remark}{Remark}[section]
\numberwithin{equation}{section}
\title{Almost global solutions of 1D nonlinear Klein-Gordon equations with small weakly decaying initial data}
\author{Hou Fei$^{1,*}$ \qquad Tao Fei$^{2,*}$ \qquad
  Yin Huicheng$^{3, } $\footnote{Hou Fei (\texttt{fhou$@$nju.edu.cn}), Tao Fei (\texttt{1458527731@qq.com}) and
    Yin Huicheng (\texttt{huicheng$@$nju.edu.cn}, \texttt{05407$@$njnu.edu.cn}) are supported by the NSFC (No.11731007, No.12101304).
    Hou Fei is also supported by the NSF of Jiangsu Province (No. BK20210170).
    In addition, Yin Huicheng is supported by the National key research and development program of China (No.2020YFA0713803).}\\
    [12pt] {\small 1. Department of Mathematics, Nanjing University, Nanjing 210093, China}\\
  {\small 2. School of Science, Nanjing University of Posts and Telecommunications,}\\
  {\small Nanjing 210023, China}\\
  {\small 3. School of Mathematical Sciences and Mathematical Institute, }\\
  {\small Nanjing Normal University, Nanjing 210023, China}}
\begin{document}

\date{}
\maketitle
\thispagestyle{empty}

\begin{abstract}

It has been known that if the initial data decay sufficiently fast at space infinity, then 1D Klein-Gordon equations
with quadratic nonlinearity admit classical solutions up to time $e^{C/\epsilon^2}$
while $e^{C/\epsilon^2}$ is also the upper bound of the lifespan,
where $C>0$ is some suitable constant and $\epsilon>0$ is the size of the initial data.
In this paper, we will focus on the 1D nonlinear Klein-Gordon equations with weakly decaying initial data.
It is shown that if the $H^s$-Sobolev norm with $(1+|x|)^{1/2+}$ weight of the initial data is small,
then the almost global solutions exist; if the initial $H^s$-Sobolev norm with $(1+|x|)^{1/2}$ weight
is small, then for any $M>0$, the solutions exist on $[0,\epsilon^{-M}]$.
Our proof is based on the dispersive estimate with a suitable $Z$-norm and a delicate analysis on the phase function.

\vskip 0.2 true cm

\noindent
\textbf{Keywords:} 1D Klein-Gordon equation, weakly decaying initial data, dispersive estimate, $Z$-norm.

\vskip 0.2 true cm
\noindent
\textbf{2020 Mathematical Subject Classification.}  35L70, 35L72.
\end{abstract}

\vskip 0.5 true cm

\tableofcontents

\section{Introduction}

Consider the Cauchy problem of the following semilinear Klein-Gordon equation
\begin{equation}\label{KG-0}
\left\{
\begin{aligned}
&\Box u+u=F(u,\p u),\quad(t,x)\in[0,\infty)\times\R^d,\\
&(u,\p_tu)(0,x)=(u_0,u_1)(x),
\end{aligned}
\right.
\end{equation}
where $\Box=\p_t^2-\Delta$, $\Delta=\ds\sum_{j=1}^d\p_j^2$, $x=(x^1,\cdots,x^d)\in\R^d$, $d\ge1$,
$\p_0=\p_t$, $\p_j=\p_{x^j}$ for $j=1,\cdots,d$, $\p_x=(\p_1,\cdots,\p_n)$, $\p=(\p_0,\p_x)$,
$u$ is real valued, $(u_0,u_1)\in H^{s+1}(\Bbb R^d)\times H^s(\Bbb R^d)$ with $s>\f{d}{2}$
being suitably large numbers,  $\ve=\|u_0\|_{H^{s+1}(\Bbb R^d)}+\|u_1\|_{H^s(\Bbb R^d)}>0$ is sufficiently
small, and the smooth nonlinearity $F(u,\p u)$ is quadratic on $(u,\p u)$.

Under the assumption of null condition for $F(u,\p u)$, the authors in \cite{DF00}
prove that the solution $u\in C([0,T_\varepsilon), H^{s+1}(\Bbb R^d))\cap  C^1([0,T_\varepsilon), H^{s}(\Bbb R^d))$
of \eqref{KG-0} exists, where $T_\varepsilon\ge Ce^{C\varepsilon^{-\mu}}$ for $\mu=1$ if $d\ge3$, and $\mu=2/3$ if $d=2$.
In addition, for $d=1$, the lifespan $T_{\ve}\ge \frac{C}{\ve^4 |\ln\ve|^6}$  of \eqref{KG-0} is shown in \cite{Delort97}.
Recently, without the restriction of null condition for $F(u,\p u)$, the authors in \cite{HY23} have established
that the existence time of the solution $u\in C([0,T_\varepsilon), H^{s+1}(\Bbb R^d))
\cap  C^1([0,T_\varepsilon), H^{s}(\Bbb R^d))$ to \eqref{KG-0} can be improved to $T_\varepsilon=+\infty$ if $d\ge3$,
$T_\varepsilon\ge e^{C\varepsilon^{-2}}$ if $d=2$ and $T_\varepsilon\ge \frac{C}{\ve^4}$ if $d=1$.
Moreover, for $d=2$ and any fixed number $\beta>0$, if
\begin{equation}\label{initial:d=2}
\t\ve=\|u_0\|_{H^{N+1}(\R^2)}+\|u_1\|_{H^{N}(\R^2)}
+\|(1+|x|)^\beta u_0\|_{L^2(\R^2)}+\|(1+|x|)^\beta u_1\|_{L^2(\R^2)}
\end{equation}
is sufficiently small, where $N\ge12$, then it is proved in \cite{HY23}
that \eqref{KG-0} has a  global small classical solution $u\in C([0,\infty),H^{N+1}(\R^2))$
$\cap C^1([0,\infty),H^{N}(\R^2))$.
In the present paper, we are concerned with the 1D case of \eqref{KG-0}, that is,
\begin{equation}\label{KG}
\left\{
\begin{aligned}
&\p_t^2u-\p_x^2u+u=F(u,\p u),\quad(t,x)\in[0,\infty)\times\R,\\
&(u,\p_tu)(0,x)=(u_0,u_1)(x).
\end{aligned}
\right.
\end{equation}
Our main results can be stated as follows.

\begin{theorem}\label{thm1}
Let $N\ge27$ and $\alpha\in(0,1/2]$.
There are two positive constants $\ve_0$ and $\kappa_0$ such that if $(u_0,u_1)$ satisfies
\begin{equation}\label{initial:data}
\ve:=\|u_0\|_{H^{N+1}(\R)}+\|u_1\|_{H^N(\R)}
+\|(\Lambda u_0,u_1)\|_{Z_\alpha}\le\ve_0,
\end{equation}
where $\Lambda:=(1-\p_x^2)^{1/2}$ and $\|\cdot\|_{Z_\alpha}$ is defined by \eqref{Znorm:def} below, then \eqref{KG}
has a unique classical solution $u\in C([0,T_{\alpha,\ve}],H^{N+1}(\R))\cap C^1([0,T_{\alpha,\ve}],H^N(\R))$ with
\begin{equation}\label{lifespan}
T_{\alpha,\ve}=\left\{
\begin{aligned}
&e^{\kappa_0/\ve^2}-1,\qquad&&\alpha=1/2,\\
&\frac{\kappa_0}{\ve^{\frac{2}{1-2\alpha}}},&&\alpha\in(0,1/2).
\end{aligned}
\right.
\end{equation}
Moreover, there is a positive constant $C$ such that
\begin{equation}\label{thm1:disp}
\|(\Lambda u,\p_tu)(t,\cdot)\|_{L^\infty(\R)}\le C\ve(1+t)^{-\alpha}.
\end{equation}
\end{theorem}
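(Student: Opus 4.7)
The plan is to rewrite \eqref{KG} as a first-order Schrödinger-type equation for the complex Klein-Gordon variable $v := \p_t u - i\Lambda u$ (so that $\Lambda u = -\mathrm{Im}\,v$), which satisfies $(\p_t + i\Lambda)v = F(u,\p u)$, and then to work with the profile $f(t) := e^{it\Lambda}v(t)$. Duhamel's formula, after expanding $v$ into its $\pm$ components, produces the bilinear identity
\begin{equation*}
\hat f(t,\xi) = \hat f(0,\xi) + \sum_{\pm_1,\pm_2}\int_0^t\int_\R e^{is\Phi_{\pm_1\pm_2}(\xi,\eta)}\,m_{\pm_1\pm_2}(\xi,\eta)\,\hat f_{\pm_1}(s,\eta)\,\hat f_{\pm_2}(s,\xi-\eta)\,d\eta\,ds,
\end{equation*}
with phases $\Phi_{\pm_1\pm_2}(\xi,\eta) = \w\xi \mp_1 \w\eta \mp_2 \w{\xi-\eta}$ and smooth symbols $m_{\pm_1\pm_2}$ inherited from $F$. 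The overall strategy is a continuity/bootstrap argument on the pair
\begin{equation*}
E_N(t) := \|u(t)\|_{H^{N+1}}+\|\p_tu(t)\|_{H^N},\qquad \|f(t)\|_{Z_\alpha},
\end{equation*}
under the a priori hypotheses $E_N(t) \le C_1\ve\w t^{\delta}$ and $\|f(t)\|_{Z_\alpha} \le C_1\ve$ on $[0,T_{\alpha,\ve}]$, to be improved to $\tfrac12 C_1\ve\w t^{\delta}$ and $\tfrac12 C_1\ve$.

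The first ingredient is a linear dispersive estimate of the form $\|e^{-it\Lambda}g\|_{L^\infty} \ls \w t^{-\alpha}\|g\|_{Z_\alpha}$, obtained by stationary phase on the oscillation $e^{i(x\xi - t\w\xi)}$: inside the light cone the phase has a unique critical point $\xi_*(x,t)$, and the $Z_\alpha$-norm defined in \eqref{Znorm:def} is designed to encode precisely the regularity of $\hat g$ near $\xi_*$ that is needed to extract the leading amplitude, interpolating between the standard $t^{-1/2}$ decay at $\alpha = 1/2$ and the trivial $L^\infty$ bound at $\alpha = 0$. This yields \eqref{thm1:disp} immediately from the $Z_\alpha$ a priori bound. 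Together with the quadratic structure of $F$ and a standard energy identity, one obtains
\begin{equation*}
\frac{d}{dt}E_N(t)^2 \ls \|(\Lambda u,\p_tu)(t)\|_{L^\infty}E_N(t)^2 \ls \ve\,\w t^{-\alpha}E_N(t)^2,
\end{equation*}
whose integration produces $E_N(t)\ls \ve\exp\bigl(C\ve\int_0^t\w s^{-\alpha}\,ds\bigr)$; by the very choice of $T_{\alpha,\ve}$ in \eqref{lifespan} this is $O(\ve)$ on the whole lifespan, so the Sobolev bootstrap closes for some small $\delta>0$.

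The hard part is the closure of the $Z_\alpha$-norm of $f$, and this is where the \emph{delicate analysis on the phase function} announced in the abstract enters. One performs a space-time resonance decomposition of each $\Phi_{\pm_1\pm_2}$: where $|\Phi_{\pm_1\pm_2}|\gt 1$, integration by parts in $s$ converts the bilinear Duhamel integrand into a cubic one, and where $|\p_\eta\Phi_{\pm_1\pm_2}|\gt 1$, integration by parts in $\eta$ does the same. Both cubic expressions are then controlled in $Z_\alpha$ by
\begin{equation*}
\ve^2\int_0^t \|e^{-is\Lambda}f(s)\|_{L^\infty}\,\w s^{-\alpha}\,ds \ls \ve^3\int_0^t\w s^{-2\alpha}\,ds.
\end{equation*}
On the residual set where both $\Phi_{\pm_1\pm_2}$ and $\p_\eta\Phi_{\pm_1\pm_2}$ are small, the Hessian $\p_\eta^2\Phi_{\pm_1\pm_2}$ is non-degenerate, so a weighted stationary-phase argument in $\eta$ recovers the same rate. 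Summing, the total nonlinear contribution is bounded by $\ve^2 t^{1-2\alpha}$ for $\alpha\in(0,1/2)$ and by $\ve^2\log(1+t)$ for $\alpha=1/2$, and imposing that this stay below the threshold $\tfrac12 C_1\ve$ produces exactly the lifespans in \eqref{lifespan}. The main obstacle is the sharp treatment of the resonant region: a naive $L^2$-based estimate loses an extra factor of $\w s^{1/2-\alpha}$, which would ruin the exponent; removing it relies crucially on the precise design of the $Z_\alpha$-norm in \eqref{Znorm:def} and on a careful quadratic expansion of $\Phi_{\pm_1\pm_2}$ at its critical points.
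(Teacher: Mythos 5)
Your proposal has a critical gap in the energy estimate that makes the claimed lifespan unreachable, and a misidentification of where the resonance problem lives.

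\textbf{The energy estimate does not close.} You write $\tfrac{d}{dt}E_N^2\ls\|(\Lambda u,\p_tu)\|_{L^\infty}E_N^2\ls\ve\w t^{-\alpha}E_N^2$ directly from the quadratic nonlinearity. Integrating, $E_N(t)\ls\ve\exp\bigl(C\ve\int_0^t\w s^{-\alpha}\,ds\bigr)\sim\ve\exp\bigl(C\ve\,t^{1-\alpha}\bigr)$. For $\alpha=1/2$ this remains $O(\ve)$ only up to $t\sim\ve^{-2}$, which is far short of $e^{\kappa_0/\ve^2}$; for $\alpha<1/2$ it caps out at $t\sim\ve^{-1/(1-\alpha)}$, again short of $\ve^{-2/(1-2\alpha)}$. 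To reach the lifespans of \eqref{lifespan} the right-hand side of the energy inequality must be \emph{quadratic} in the pointwise decay, i.e. $\ls\|U\|^2_{W^{N',\infty}}\|U\|_{H^N}\ls\ve^2\w t^{-2\alpha}\|U\|_{H^N}$, so that one integrates $\w s^{-2\alpha}$ (which yields $\log(1+t)$ at $\alpha=1/2$ and $t^{1-2\alpha}$ for $\alpha<1/2$). This is exactly what the paper's Lemma~\ref{lem:energy} produces, and it is only available \emph{after} a normal form transformation that eliminates the quadratic nonlinearity. You use integration by parts in $s$ only inside the profile/$Z_\alpha$ bootstrap, not in the energy estimate; you need it in both.

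\textbf{The space--time resonance decomposition is aimed at the wrong phase.} For the 1D Klein--Gordon system the bilinear phase $\Phi_{\mu_1\mu_2}$ \emph{never vanishes} (inequality \eqref{2phase:bdd1}): $|\Phi_{\mu_1\mu_2}^{-1}|\ls 1+\min\{|\xi_1+\xi_2|,|\xi_1|,|\xi_2|\}$. Thus there is no time-resonant set at the quadratic level, and the "residual region where both $\Phi_{\pm_1\pm_2}$ and $\p_\eta\Phi_{\pm_1\pm_2}$ are small'' that you propose to handle by stationary phase on the Hessian is empty. One simply integrates by parts in $s$ globally and lands on a cubic nonlinearity. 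The genuine resonance occurs only at the cubic level, in the sign combination $(++-)$, where $\Phi_{++-}$ can vanish (cf.~\eqref{bad:phase}, \eqref{badphase:bdd}); the good combinations $(+++),(+--),(---)$ satisfy \eqref{3phase:bdd} and admit a second normal form. Moreover, the delicate part of the paper is not a stationary-phase argument in the frequency of $\Phi_{++-}$ alone: after unwinding the Fourier inversion the relevant phase is $\Psi_{bad}=s\Phi_{bad}+\xi(x-x_1)+\eta(x_1-x_2)+\zeta(x_2-x_3)$ (equations \eqref{bad:kernel}, \eqref{grad:bad:phase}), whose critical points depend jointly on the frequencies, the physical positions $x_l$, and $s$. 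Closing the $Z_\alpha$-norm (which weights $\|Q_jP_kV\|_{L^2}$ by $2^{j\alpha}$ in physical space) requires a case-by-case kernel analysis in both dyadic parameters $(j,k)$, with frequency cutoffs $\chi^I_{high/med/low}$, $\chi^{II}_{high/low}$ tuned to the sizes $t|\eta|$ versus $2^j$, $2^{j+3k_1}$. Your sketch replaces this by "a careful quadratic expansion of $\Phi_{\pm_1\pm_2}$ at its critical points,'' but that phase has no critical points for $\Phi$ itself, and at the cubic level a naive stationary-phase bound in $\eta$ does not by itself remove the $\w s^{1/2-\alpha}$ loss you correctly identify as the obstacle; the physical-space dyadic decomposition $Q_j$ is essential for that.
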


\begin{corollary}\label{coro1}
Let $N\ge27$.
There are two positive constants $\eps_1$ and $\kappa_1$ such that for any $\beta>1/2$, if $(u_0,u_1)$ satisfies
\begin{equation*}
\eps:=\|u_0\|_{H^{N+1}(\R)}+\|u_1\|_{H^N(\R)}
+\|\w{x}^\beta\Lambda^{14}(\Lambda u_0,u_1)\|_{L^2(\R)}\le\eps_1,
\end{equation*}
where $\w{x}=\sqrt{1+x^2}$, then \eqref{KG} has a unique classical solution $u\in C([0,e^{\kappa_1/\eps^2}-1],H^{N+1}(\R))\cap C^1([0,e^{\kappa_1/\eps^2}-1],H^N(\R))$.
\end{corollary}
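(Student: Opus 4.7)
The plan is to reduce Corollary \ref{coro1} directly to the $\alpha=1/2$ case of Theorem \ref{thm1}. Notice that the lifespan $e^{\kappa_1/\eps^2}-1$ claimed in the corollary matches $T_{1/2,\ve}$ from \eqref{lifespan} exactly, so it suffices to establish a continuous embedding
\begin{equation*}
\|f\|_{Z_{1/2}} \le C(\beta)\,\|\w{x}^\beta \Lambda^{14} f\|_{L^2(\R)}
\qquad\text{for every } \beta>1/2,
\end{equation*}
and then apply it with $f=\Lambda u_0$ and $f=u_1$. Once this is available, choosing $\eps_1$ small enough (and $\kappa_1$ appropriately in terms of $\kappa_0$) makes the quantity $\ve$ in \eqref{initial:data} smaller than $\ve_0$, and Theorem \ref{thm1} with $\alpha=1/2$ yields both the existence on $[0,e^{\kappa_0/\ve^2}-1]$ and the regularity claimed in the corollary.

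The heart of the argument is the elementary Cauchy--Schwarz estimate
\begin{equation*}
\|\hat g\|_{L^\infty_\xi(\R)}\le\|g\|_{L^1(\R)}\le\|\w{x}^{-\beta}\|_{L^2(\R)}\,\|\w{x}^\beta g\|_{L^2(\R)},
\end{equation*}
which is finite precisely because $\beta>1/2$. Applied to $g=\Lambda^{14}f$ it yields the pointwise Fourier bound $\sup_\xi \w{\xi}^{14}|\hat f(\xi)|\ls\|\w{x}^\beta \Lambda^{14} f\|_{L^2}$. With $14$ derivatives of regularity on hand, any dyadic Littlewood--Paley piece $P_k f$ can be controlled by this $L^\infty_\xi$ bound in the high-frequency regime $2^k\gg 1$ (the factor $\w{\xi}^{-14}$ producing geometric decay in $k$), while in the low-frequency regime $2^k\ls 1$ one simply combines the $L^2$ bound with Bernstein's inequality. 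These two regimes together recover any reasonable $Z$-type norm built from a Fourier weight of order $\le 14$ together with a Sobolev index of order $\le 14$.

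The only real obstacle is a bookkeeping match: one must verify that the specific $\|\cdot\|_{Z_\alpha}$ defined in \eqref{Znorm:def} is indeed dominated by $\sup_\xi \w{\xi}^{14}|\hat f(\xi)|$ up to $L^2$ contributions that are absorbed by $\|\w{x}^\beta \Lambda^{14} f\|_{L^2}$. Since $Z_\alpha$ is tailored so that Theorem \ref{thm1} delivers the dispersive estimate \eqref{thm1:disp} via stationary phase on the Klein--Gordon phase $\w{\xi}$, the required $L^\infty_\xi$ control together with $14$ derivatives of regularity is exactly why the number $14$ appears explicitly in the hypothesis of the corollary. Once this matching is checked, no further nonlinear analysis is required: the whole content of the corollary is repackaging the weighted $L^2$ hypothesis into the $Z_{1/2}$ hypothesis of Theorem \ref{thm1}.
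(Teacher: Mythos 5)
Your overall reduction is the same as the paper's: both proofs boil down to the embedding $\|f\|_{Z_{1/2}}\le C(\beta)\|\w{x}^\beta\Lambda^{14}f\|_{L^2}$ followed by an application of Theorem \ref{thm1} with $\alpha=1/2$. But your proposed proof of the embedding has a genuine gap. The norm $\|f\|_{Z_{1/2}}=\sum_{j,k}2^{j/2+12k}\|Q_jP_kf\|_{L^2}$ contains the \emph{physical-space} weight $2^{j/2}$, and to sum over $j$ you need quantitative spatial decay of $f$, i.e.\ that $\|Q_jP_kf\|_{L^2}$ decays strictly faster than $2^{-j/2}$. The bound $\sup_\xi\w{\xi}^{14}|\hat f(\xi)|\ls\|\w{x}^\beta\Lambda^{14}f\|_{L^2}$ you extract via Cauchy--Schwarz gives frequency-side information only: it controls $\|P_kf\|_{L^2}$ with geometric decay in $k$, but it says nothing about the distribution of $P_kf$ among the spatial annuli $Q_j$ (spatial decay of $f$ corresponds to \emph{smoothness} of $\hat f$, not boundedness). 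Bounding $\|Q_jP_kf\|_{L^2}\le\|P_kf\|_{L^2}$ uniformly in $j$ makes $\sum_j2^{j/2}$ diverge, so the "bookkeeping match" you defer to at the end cannot be carried out: $\|\cdot\|_{Z_{1/2}}$ is \emph{not} dominated by $\sup_\xi\w{\xi}^{14}|\hat f(\xi)|$ plus $H^N$-type contributions.

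The correct way to exploit $\beta>1/2$ — and what the paper does — is to keep the weight on the physical side and apply Cauchy--Schwarz in the $j$-sum:
\begin{equation*}
\sum_{j}2^{j/2}\|Q_jP_kf\|_{L^2}
=\sum_j2^{j(1/2-\beta)}\,2^{j\beta}\|Q_jP_kf\|_{L^2}
\ls\Big(\sum_j2^{j(1-2\beta)}\Big)^{1/2}\|\w{x}^\beta P_kf\|_{L^2},
\end{equation*}
where the geometric series converges precisely because $\beta>1/2$. One then still has to commute the weight $\w{x}^\beta$ past $P_k\Lambda^{-14}$, which requires the weighted $L^1$ kernel bound $|\cK(x-y)|\ls2^{-13k}(1+2^k|x-y|)^{-3}$ together with $\w{x}^\beta\ls\w{x-y}^\beta\w{y}^\beta$ and Young's inequality; this is also where the exponent $14$ actually enters (it must exceed $N_1=12$ by enough to make the $k$-sum converge), rather than through a stationary-phase consideration as you suggest. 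So the skeleton of your argument is right, but the central analytic step needs to be replaced.
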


\begin{corollary}\label{coro2}
Let $N\ge27$.
For any $M>0$, there is $\eps_2>0$, such that if $(u_0,u_1)$ satisfies
\begin{equation*}
\eps:=\|u_0\|_{H^{N+1}(\R)}+\|u_1\|_{H^{N}(\R)}
+\|\w{x}^{1/2}\Lambda^{14}(\Lambda u_0,u_1)\|_{L^2(\R)}\le\eps_2,
\end{equation*}
then \eqref{KG} has a unique classical solution $u\in C([0,\eps^{-M}],H^{N+1}(\R))\cap C^1([0,\eps^{-M}],H^N(\R))$.
\end{corollary}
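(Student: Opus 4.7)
The plan is to deduce Corollary~\ref{coro2} directly from Theorem~\ref{thm1} by choosing a parameter $\alpha\in(0,1/2)$ tuned to the prescribed exponent $M$, and then verifying that the weighted $L^2$-assumption on the data implies the required $Z_\alpha$-smallness. Concretely, given $M>0$, I pick $\alpha\in(0,1/2)$ with $\frac{2}{1-2\alpha}>M$; for example $\alpha=\frac12-\frac{1}{2(M+1)}$ when $M\ge 1$, and any small positive $\alpha$ otherwise. With such a choice, formula \eqref{lifespan} yields the lifespan $T_{\alpha,\eps}=\kappa_0\eps^{-2/(1-2\alpha)}$, and since $\frac{2}{1-2\alpha}-M>0$ is fixed, the inequality $T_{\alpha,\eps}\ge\eps^{-M}$ holds as soon as $\eps$ is sufficiently small in terms of $\kappa_0$ and this gap. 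This requirement, together with the smallness threshold $\ve_0$ of Theorem~\ref{thm1}, determines the constant $\eps_2=\eps_2(M)$.

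It then suffices to establish the functional-analytic embedding
\[
\|f\|_{Z_\alpha}\;\lesssim_\alpha\;\|\w{x}^{1/2}\Lambda^{14}f\|_{L^2(\R)}\qquad\text{for every }\alpha\in(0,1/2),
\]
and apply it to $f=(\Lambda u_0,u_1)$: then $\|(\Lambda u_0,u_1)\|_{Z_\alpha}\le C\eps$, the hypothesis of Theorem~\ref{thm1} holds after further shrinking $\eps_2$ if necessary, and the solution exists on $[0,T_{\alpha,\eps}]\supset[0,\eps^{-M}]$. I would prove the embedding by Fourier-side analysis: the weight $\w{x}^{1/2}$ corresponds, after Plancherel, to fractional regularity in $\xi$ of $\hat f$, while the $\Lambda^{14}$ factor supplies abundant decay at high frequencies. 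A dyadic decomposition in $\xi$, combined with the $\xi$-weights built into $\|\cdot\|_{Z_\alpha}$ and Sobolev embedding on each shell, reduces the bound to an $L^2$-estimate on $\w{\partial_\xi}^{1/2}(\w{\xi}^{14}\hat f)$; any logarithmic loss arising from the endpoint failure of $H^{1/2}_\xi\hookrightarrow L^\infty_\xi$ is absorbed by the fourteen extra derivatives.

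The main obstacle is precisely that the weight $\w{x}^{1/2}$ is the endpoint case of Corollary~\ref{coro1}: the strict inequality $\beta>1/2$ there affords a polynomial gain that permits running Theorem~\ref{thm1} at $\alpha=1/2$ and hence obtaining the exponential lifespan $e^{\kappa_1/\eps^2}-1$, whereas here no such margin is available. One is therefore compelled to accept some $\alpha<1/2$, which degrades the dispersive decay rate to $(1+t)^{-\alpha}$ and correspondingly the lifespan to $\eps^{-2/(1-2\alpha)}$. The saving grace is that $\frac{2}{1-2\alpha}\to\infty$ as $\alpha\nearrow 1/2$, so any prescribed polynomial threshold $\eps^{-M}$ can be exceeded by bringing $\alpha$ sufficiently close to $1/2$, at the price of a correspondingly smaller smallness threshold $\eps_2(M)$. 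This quantitative trade-off between the admissible $\alpha$ and the accessible polynomial lifespan is exactly what Corollary~\ref{coro2} records.
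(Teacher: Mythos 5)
Your proposal is correct and follows essentially the same route as the paper: choose $\alpha\in(0,1/2)$ close enough to $1/2$ that $\frac{2}{1-2\alpha}>M$, establish the embedding $\|f\|_{Z_\alpha}\lesssim_\alpha\|\w{x}^{1/2}\Lambda^{14}f\|_{L^2}$, and feed this into Theorem~\ref{thm1}, shrinking $\eps_2(M)$ to absorb the $\alpha$-dependent constant and to guarantee $T_{\alpha,\ve}\ge\eps^{-M}$. One small imprecision: your sketch of the embedding attributes the $\alpha$-dependent loss to a ``logarithmic'' failure of $H^{1/2}_\xi\hookrightarrow L^\infty_\xi$ absorbed by the $\Lambda^{14}$ factor, whereas the actual loss is the Cauchy--Schwarz constant $\bigl(\sum_{j}2^{2j(\alpha-1/2)}\bigr)^{1/2}\sim(1-2\alpha)^{-1/2}$ from summing the physical-space dyadic shells, together with the equivalence $\|2^{j/2}\|Q_jg\|_{L^2}\|_{\ell_j^2}\approx\|\w{x}^{1/2}g\|_{L^2}$; the $\Lambda^{14}$ factor instead pays for the $2^{12k}$ weight and the commutation of $\w{x}^{1/2}$ with $P_k\Lambda^{-14}$. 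This does not affect the validity of your argument, since the resulting $\alpha$-dependent constant is handled exactly as you propose.
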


\begin{remark}
For the Cauchy problem
\begin{equation}\label{KG:blowup}
\left\{
\begin{aligned}
&\p_t^2u-\p_x^2u+u=(\p_tu)^2\p_xu,\\
&(u,\p_tu)(0,x)=\ve(\tilde u_0,\tilde u_1)(x),
\end{aligned}
\right.
\end{equation}
where $(\tilde u_0,\tilde u_1)\in C_0^\infty([-R,R])$,
\cite[Proposition 7.8.8]{Hormander97book} proved that
the lifespan $T_\ve\le R(e^{\frac{2}{\sigma\ve^2}}-1)$ holds if
$\sigma=\int_{\R}\tilde u_0'(x)\tilde u_1(x)dx>0$.
Note that problem \eqref{KG} contains the case \eqref{KG:blowup}, then
the upper bound $T_{1/2,\ve}=e^{\kappa_0/\ve^2}-1$ in Theorem \ref{thm1}
and $T_{\ve}=e^{\kappa_1/\eps^2}-1$ in Corollary \ref{coro1} are optimal.
\end{remark}

\begin{remark}
Although the lifespan $T_{\alpha,\ve}$ in Theorem \ref{thm1} may be not optimal
for $\alpha\in(0,1/2)$, it suffices to obtain Corollary \ref{coro2}.
\end{remark}

\begin{remark}
By the definition of $Z_{\alpha}$-norm in \eqref{Znorm:def} below, there exists some positive constant $C>0$
such that
\begin{equation}\label{Znorm:rough}
\text{$\|f\|_{Z_{1/2}}\le C\|(1+|x|)^{1/2+}\Lambda^{14}f\|_{L^2}$ and $\|f\|_{Z_{\alpha}}\le C\|(1+|x|)^{1/2}\Lambda^{14}f\|_{L^2}$
for $\alpha\in(0, 1/2)$.}
\end{equation}
One can see the details in the proofs for Corollaries \ref{coro1} and \ref{coro2} of $\S 6$.
\end{remark}

\begin{remark}
When the small data $(u_0,u_1)(x)$ decay sufficiently fast,
the analogous result to Corollary \ref{coro1} has been obtained for problem \eqref{KG} in  \cite{MTT97} by the vector field method.
It is pointed out that our Corollary \ref{coro1} only requires the smallness of
$H^s$-Sobolev norm with $\w{x}^{1/2+}$
weights of $(u_0,u_1)$, which leads to the failure of vector field method since
$\|x\p_x(u_0,u_1)\|_{L^2(\Bbb R)}$ can become infinite.
\end{remark}

\begin{remark}
Consider 1D quasilinear Klein-Gordon equation
\begin{equation}\label{KG-00}
\left\{
\begin{aligned}
&\p_t^2v-\p_x^2v+v=P(v,\p v,\p^2_{tx}v, \p^2_xv),\quad(t,x)\in[0,\infty)\times\R,\\
&(v,\p_tv)(0,x)=\delta (v_0,v_1)(x),
\end{aligned}
\right.
\end{equation}
where $\delta>0$ is small,  $P(v,\p v,\p^2_{tx}v, \p^2_xv)$ is smooth
on its arguments and linear with respect to $(\p^2_{tx}v, \p^2_xv)$, moreover, $P$ vanishes at least at order 2 at 0.
In \cite{Delort01}, under the null condition of $P(v,\p v,\p^2_{tx}v, \p^2_xv)$ and $(v_0,v_1)(x)\in C_0^{\infty}(\Bbb R)$,
the author shows that \eqref{KG-00} has a global small solution.
When $P(v,\p v,\p^2_{tx}v, \p^2_xv)$ is a homogeneous polynomial of degree 3 in $(v,\p v,\p^2_{tx}v, \p^2_xv)$,
affine in $(\p^2_{tx}v, \p^2_xv)$, if there exists an integer $s$ sufficiently large such that
\begin{equation}\label{YH-0}
\|v_0\|_{H^{s+1}(\Bbb R)}+\|v_1\|_{H^{s}(\Bbb R)}+\|xv_0\|_{H^2(\Bbb R)}+\|xv_1\|_{H^1(\Bbb R)}\le 1,
\end{equation}
it is proved in \cite{Stingo18} that \eqref{KG-00} admits a global small solution under the null condition
of $P(v,\p v,\p^2_{tx}v, \p^2_xv)$.
By \eqref{YH-0}, $(v_0,v_1)$ decays as $\w{x}^{-1}$ at infinity, which implies that the method of Klainerman vector fields
can be applied in \cite{Stingo18}.
\end{remark}

\begin{remark}
When $d\ge2$, it is well known that problem \eqref{KG-0} with rapidly decaying and small
initial data $(u_0, u_1)$ has a global
smooth solution, see \cite{Klainerman85,OTT96,Shatah85,ST93}.
\end{remark}

\begin{remark}
For 1D or 2D irrotational Euler-Poisson systems, when the $H^s$-Sobolev norms with $1+|x|$ weight of initial data are small,
the authors in \cite{GHZ17} or \cite{LW14} have proved the global existence of small solutions, respectively.
In this paper, we prove the almost global existence of problem \eqref{KG} with quadratic nonlinearity and small $H^s$-Sobolev norm
with lower order $\w{x}^{1/2+}$ weight. It is expected that 1D  or 2D irrotational Euler-Poisson systems
still have global solutions  when the corresponding initial data with the lower order weight $\w{x}^{1/2+}$ or $\w{x}^{0+}$ are small.
\end{remark}

We now give some comments and illustrations on the proof of Theorem \ref{thm1}.
Note that the vector field method in \cite{Klainerman85,MTT97,OTT96} will produce quite high order $\langle x\rangle$ weight
in the resulting Sobolev norm of the initial data, which is not suitable for the proof of Theorem \ref{thm1} with the
initial data of lower order $\w{x}^{1/2+}$ weight. Motivated by
the Fourier analysis methods as in \cite{GHZ17,IP13,LW14, Shatah85}, at first,
we will transform the quadratic nonlinearity of \eqref{KG} into the cubic nonlinearity.
For this end, we set
\begin{equation*}
U:=(\p_t+i\Lambda)u.
\end{equation*}
Then \eqref{KG} can be reduced to the following half Klein-Gordon equation
\begin{equation}\label{halfKG}
(\p_t-i\Lambda)U=\cN(U),
\end{equation}
where $\cN(U)$ is at least quadratic in $U$.
Denote the profile
\begin{equation}\label{intro:profile:def}
V:=V_+=e^{-it\Lambda}U,\quad V_-:=\overline{V}.
\end{equation}
Applying Fourier transformation to \eqref{halfKG} yields
\begin{equation}\label{intro:profile1}
\hat V(t,\xi)=\hat V(0,\xi)+\sum_{\mu_1,\mu_2=\pm}\int_0^t\int_{\xi_1+\xi_2=\xi}
e^{is\Phi_{\mu_1\mu_2}}m_2(\xi_1,\xi_2)\hat V_{\mu_1}(s,\xi_1)
\hat V_{\mu_2}(s,\xi_2)d\xi_1ds+\mathrm{other~terms},
\end{equation}
where $\hat V(t,\xi)=(\sF_xV(t,x))(t,\xi)$, $m_2(\xi_1,\xi_2)$ is some Fourier multiplier and
\begin{equation*}
\Phi_{\mu_1\mu_2}=\Phi_{\mu_1\mu_2}(\xi_1,\xi_2):=-\Lambda(\xi_1+\xi_2)+\mu_1\Lambda(\xi_1)
+\mu_2\Lambda(\xi_2),\quad\Lambda(\xi)=\sqrt{1+\xi^2},\quad\xi\in\R.
\end{equation*}
Note that $\Phi_{\mu_1\mu_2}\neq0$ for equation \eqref{KG}. Then one can integrate
by parts in time $s$ in \eqref{intro:profile1} and utilize \eqref{halfKG} to obtain
\begin{equation}\label{intro:profile2}
\begin{split}
\hat V(t,\xi)=\hat V(0,\xi)+\sum_{\substack{(\mu_1,\mu_2,\mu_3)\in\{(+++),\\
(++-),(+--),(---)\}}}\int_0^t\iint_{\xi_1+\xi_2+\xi_3=\xi}
e^{is\Phi_{\mu_1\mu_2\mu_3}}m_3(\xi_1,\xi_2,\xi_3)\hat V_{\mu_1}(s,\xi_1)\\
\times\hat V_{\mu_2}(s,\xi_2)\hat V_{\mu_3}(s,\xi_3)d\xi_1d\xi_2ds+\mathrm{other~terms},
\end{split}
\end{equation}
where $m_3(\xi_1,\xi_2,\xi_3)$ is the resulting Fourier multiplier and
\begin{equation}\label{intro:3phase}
\Phi_{\mu_1\mu_2\mu_3}(\xi_1,\xi_2,\xi_3):=-\Lambda(\xi_1+\xi_2+\xi_3)+\mu_1\Lambda(\xi_1)
+\mu_2\Lambda(\xi_2)+\mu_3\Lambda(\xi_3).
\end{equation}
Through the normal form transformation (see details in Section \ref{1st:nf}), one can simply
consider problem \eqref{KG} with the cubic nonlinearity.
Based on this, applying the standard energy method, one can obtain that there are some
positive constants $C$ and $N'$ such that
\begin{equation}\label{intro:energy}
\frac{d}{dt}\|U(t)\|_{H^N(\R)}\le C\|U(t)\|^2_{W^{N',\infty}(\R)}\|U(t)\|_{H^N(\R)}.
\end{equation}
To derive the sufficient time-decay of $\|U(t)\|_{W^{N',\infty}}$,
we firstly consider the following corresponding linear problem of \eqref{KG}
\begin{equation}\label{KG:ln}
\left\{
\begin{aligned}
&\p_t^2u_{lin}-\p_x^2u_{lin}+u_{lin}=0,\quad(t,x)\in[0,\infty)\times\R,\\
&(u_{lin},\p_tu_{lin})(0,x)=(u_0,u_1)(x).
\end{aligned}
\right.
\end{equation}
The solution to \eqref{KG:ln} can be expressed as
\begin{equation}\label{KG:ln:solu}
u_{lin}(t)=\frac{(e^{it\Lambda}+e^{-it\Lambda})u_0}{2}
+\frac{(e^{it\Lambda}-e^{-it\Lambda})\Lambda^{-1}u_1}{2i}.
\end{equation}
Note that by the standard dispersive estimate  of $e^{\pm it\Lambda}$ (see \eqref{disp:estimate} below), one has
\begin{equation}\label{stan:disp}
\|e^{\pm it\Lambda}f\|_{L^\infty(\R)}\le C(1+t)^{-1/2}\|\Lambda^{3/2+}f\|_{L^1(\R)}.
\end{equation}
Under the weakly decaying initial data of Theorem \ref{thm1},
it is necessary to employ the $Z_{\alpha}$-norm
instead of the $L^1(\R)$ norm on the right hand side of \eqref{stan:disp},
which has the form
\begin{equation}\label{ln:solu:decay}
\|u_{lin}(t)\|_{W^{N',\infty}(\R)}\le C(1+t)^{-\alpha}\|(u_0, \Lambda^{-1}u_1)\|_{Z_\alpha},
\quad\alpha\in(0,1/2].
\end{equation}
Similarly, for the solution $u(t)$ to the nonlinear problem \eqref{KG}, we can arrive at
\begin{equation}\label{solu:decay}
\|U(t)\|_{W^{N',\infty}(\R)}\le C(1+t)^{-\alpha}\|V(t)\|_{Z_\alpha},
\quad\alpha\in(0,1/2],
\end{equation}
where $V$ is defined in \eqref{intro:profile:def}.
The remaining task is to control $\|V(t)\|_{Z_\alpha}\le C\ve$.
Inspired by \cite{IP13,LW14}, we will give a precise analysis on the related cubic nonlinearity and perform a suitable
normal form transformation once again. Note that for $(\mu_1,\mu_2,\mu_3)\in\{(+++),(+--),(---)\}$,
the phase $\Phi_{\mu_1\mu_2\mu_3}$ does not vanish and the cubic nonlinearity can be further transformed into
a quartic one. Then for the bad cubic nonlinearity $\hat V_+(s,\xi_1)\hat V_+(s,\xi_2)\hat V_-(s,\xi_3)$,
the corresponding phase in \eqref{intro:profile2} is
\begin{equation}\label{bad:phase}
\begin{split}
&\Phi_{bad}(\xi,\eta,\zeta)=\Phi_{++-}(\xi_1,\xi_2,\xi_3)
=-\Lambda(\xi)+\Lambda(\xi-\eta)+\Lambda(\eta-\zeta)-\Lambda(\zeta),\\
&\xi_1=\xi-\eta,\qquad\xi_2=\eta-\zeta,\qquad\xi_3=\zeta.
\end{split}
\end{equation}
To handle the situation of bad phase, we write \eqref{intro:profile2} in the physical space as
\begin{equation}\label{intro:profile3}
\begin{split}
V(t,x)=V(0,x)+\frac{1}{(2\pi)^{3}}\int_0^t\iiint_{\R^3}K_{bad}(x-x_1,x-x_2,x-x_3)V_+(s,x_1)
V_+(s,x_2)\\
\times V_-(s,x_3)dx_1dx_2dx_3ds+\mathrm{other~terms},
\end{split}
\end{equation}
where the Schwartz kernel $K_{bad}$ is given by
\begin{equation}\label{bad:kernel}
\begin{split}
&K_{bad}(x-x_1,x-x_2,x-x_3)=\iiint_{\R^3}e^{i\Psi_{bad}}\times\{\mathrm{other~terms}\}d\xi d\eta d\zeta,\\
&\Psi_{bad}=s\Phi_{bad}(\xi,\eta,\zeta)+\xi(x-x_1)+\eta(x_1-x_2)+\zeta(x_2-x_3).
\end{split}
\end{equation}
Therefore, in order to estimate $\|V(t)\|_{Z_\alpha}$, the key points  are  to analyze the phase $\Psi_{bad}$
and further to treat the Schwartz kernel $K_{bad}$.
For this purpose, according to the relations of $\xi_1=\xi-\eta$, $\xi_2=\eta-\zeta$ and $\xi_3=\zeta$,
the following cases are distinguished:
\begin{equation}\label{case:intro}
\begin{array}{cccc}
  &\xi-\eta & \eta-\zeta & \zeta \\
  {\rm case~(LLH)} & low & low & high \\
  {\rm case~(HLL)} & high & low & low \\
  {\rm case~(LHL)} & low & high & low \\
  {\rm case~(HLH)} & high & low & high\\
  {\rm case~(Oth)} & & other~cases &
\end{array}
\end{equation}

In the case~(LLH), one has $|\xi-\eta|,|\eta-\zeta|\ll|\zeta|$ and $\Phi_{bad}\neq0$.
Then the related cubic nonlinearity can be transformed into the quartic one.

For the cases of~(HLL), (LHL), (HLH) and (Oth), it is required to precisely compute
the critical points of $\Psi_{bad}$.
However, this is a hard task since $\p_{\xi,\zeta}\Psi_{bad}$ depends on the space-time locations
as well as the frequencies:
\begin{equation}\label{grad:bad:phase}
\begin{split}
\p_\xi\Psi_{bad}&=x-x_1+s(\Lambda'(\xi-\eta)-\Lambda'(\xi))
=x-x_1-s\eta\Lambda''(\xi-r_1\eta),\\
\p_\zeta\Psi_{bad}&=x_2-x_3+s(\Lambda'(\zeta-\eta)-\Lambda'(\zeta))
=x_2-x_3-s\eta\Lambda''(\zeta-r_2\eta),
\end{split}
\end{equation}
where $r_1,r_2\in[0,1]$ and $\Lambda''(y)=(1+y^2)^{-3/2}$ with $y\in\R$.
On the other hand, in order to analyze the critical points of $\Psi_{bad}$ in \eqref{grad:bad:phase},
the Littlewood-Paley decompositions both in the physical and frequency spaces are applied,
which leads to the introduction of the related $Z_{\alpha}$-norm.
Note that by a careful discussion on the relations between $s\eta$ and other factors in \eqref{grad:bad:phase},
a suitable classification will be taken in terms of the relative size of the space-time locations and the frequencies.
Roughly speaking, the classification includes: near the possible critical points and away from the critical points
of $\Psi_{bad}$. Near the possible critical points, the $Z_{\alpha}$-norm estimate of the cubic nonlinearity can be
treated by the dispersive estimate \eqref{solu:decay} with a bootstrap assumption on $\|V(t)\|_{Z_\alpha}$.
Away from the critical points, the stationary phase method is performed.
Nevertheless, many  involved  and technical computations are needed.
For examples, in the case~(HLL) with $|\eta|\ll|\xi|$, by the observation $\Lambda''(\xi-r_1\eta)\approx(1+|\xi|)^{-3}$,
the $L_x^\infty$ norm of some related high frequency term can be obtained;
in the case~(HLH), due to the different distances from the zero points of $\p_\xi\Psi_{bad}$,
three cases including the high-frequency, intermediate-frequency and low-frequency in
the kernel of $K_{bad}$ are separately treated: with respect to  the parts of the high-frequency and low-frequency,
since the corresponding frequencies are away from the zero points of $\p_\xi\Psi_{bad}$, the stationary phase argument
with respect to the $\xi$ variable can be implemented. For the part of intermediate frequency,
the zero points of $\p_\xi\Psi_{bad}$ and $\p_\zeta\Psi_{bad}$ will be considered simultaneously
so that the space-decay rate of $K_{bad}$ can be obtained. Next we explain why
some technical analysis on the related phase $\Psi_{bad}$ in the 2D case of \cite{IP13}
is difficult to be utilized directly by us. For the  2D case, such a faster time-decay estimate
than \eqref{solu:decay} in 1D case is obtained
\begin{equation}\label{solu:decay:2d}
\|U(t)\|_{W^{N',\infty}(\R^2)}\le C(1+t)^{-1}\|V(t)\|_{Z_1}.
\end{equation}
Due to \eqref{Znorm:rough}, the estimate of $\|V(t)\|_{Z_1}$ in \eqref{solu:decay:2d} roughly comes down to
that of $\|(1+|x|)^{1+}\Lambda^{\upsilon} V\|_{L^2(\R^2)}$ for some suitable number $\upsilon >0$.
To this end, two kinds of regions for $|x|\ge s^\theta$ and $|x|\le s^\theta$ with $\theta\in(0,1)$
are divided, respectively. For $|x|\le s^\theta$, the authors in \cite{IP13} obtain that for $\theta\in(0,1)$,
\begin{equation}\label{IP13:InCone}
\begin{split}
\|(1+|x|)^{1+}\Lambda^{\upsilon} V(t)\|_{L^2(|x|\le s^\theta)}&\le C\int_0^t(1+s)^{\theta^{+}}
\|U(s)\|^2_{W^{N',\infty}(\R^2)}\|U(s)\|_{H^N}ds+\mathrm{other~terms}\\
&\le C\ve^3\int_0^t(1+s)^{-2+\theta^{+}}ds+\mathrm{other~terms}\\
&\le C\ve^3+\mathrm{other~terms},
\end{split}
\end{equation}
which yields the smallness estimate of  $\|V(t)\|_{Z_1}$ when $|x|\le s^\theta$.
However, in our problem \eqref{KG}, if taking the case of $\alpha=1/2$ as an instance,
by $\|U(t)\|_{W^{N',\infty}(\R)}\le C(1+t)^{-1/2}\|V(t)\|_{Z_{1/2}}$
and $\|V(t)\|_{Z_{1/2}}\le C\|(1+|x|)^{1/2+}\Lambda^{\upsilon} V\|_{L^2(\R)}$, then similarly to \eqref{IP13:InCone}, one has
that for $\theta>0$,
\begin{equation}
\begin{split}
\|(1+|x|)^{1/2+}\Lambda^{\upsilon} V(T_{1/2,\ve})\|_{L^2(|x|\le s^\theta)}&\le C\ve^3\int_0^{T_{1/2,\ve}}(1+s)^{\theta^{+}/2-1}ds+\mathrm{other~terms}\\
&\le C\ve^3(1+T_{1/2,\ve})^{\theta^{+}/2}+\mathrm{other~terms}.
\end{split}
\end{equation}
This means that $T_{1/2,\ve}\le \ve^{-\f{4}{\theta^{+}}}$ holds in order to guarantee the smallness of
$\|V(t)\|_{Z_1}$, which is too crude by comparison with $T_{1/2,\ve}\sim e^{\kappa_0/\ve^2}$ in \eqref{lifespan} of Theorem \ref{thm1}.
This is the reason that we have to give more delicate analysis on the related phase $\Psi_{bad}$  in \eqref{bad:kernel}.

Based on all the above analysis, the estimate of the $Z_{\alpha}$-norm of the
cubic nonlinearity in \eqref{intro:profile3} will be finished.
On the other hand, the treatments for the quartic nonlinearity and other terms in \eqref{intro:profile3}
are much easier. Finally, the bootstrap assumption of
$\|V(t)\|_{Z_\alpha}$ can be closed and then Theorem \ref{thm1} is proved.

\vskip 0.3cm

The paper is organized as follows.
In Section 2, some preliminaries such as the Littlewood-Paley decomposition, the definition of $Z_{\alpha}$-norm,
the linear dispersive estimate and two useful lemmas are illustrated.
By the normal form transformations, a reformulation of \eqref{KG} will be derived in Section 3.
In Section 4, some energy estimates and the continuity of the $Z_{\alpha}$-norm are established.
In Section 5, the related $Z_{\alpha}$-norm is estimated.
In Section 6, we complete the proofs of Theorem \ref{thm1} and Corollaries \ref{coro1}-\ref{coro2}.
In addition, the estimates on some resulting multilinear Fourier multipliers are given in Appendix.

\section{Preliminaries}

\subsection{Littlewood-Paley decomposition and definition of $Z_{\alpha}$-norm}

For the integral function $f(x)$ on $\R$, its Fourier transformation is defined as
\begin{equation*}
\hat f(\xi):=\sF_xf(\xi)=\int_{\R}e^{-ix\xi}f(x)dx.
\end{equation*}
Choosing a smooth cut-off function $\psi: \R\rightarrow[0,1]$, which equals 1 on $[-5/4,5/4]$ and vanishes outside $[-8/5,8/5]$,
we set
\begin{equation*}
\begin{split}
&\psi_k(x):=\psi(|x|/2^k)-\psi(|x|/2^{k-1}),\quad k\in\Z,k\ge0,\\
&\psi_{-1}(x):=1-\sum_{k\ge0}\psi_k(x)=\psi(2|x|),
\quad\psi_I:=\sum_{k\in I\cap\Z\cap[-1,\infty)}\psi_k,
\end{split}
\end{equation*}
where $I$ is any interval of $\R$.
Let $P_k$ be the Littlewood-Paley projection onto frequency $2^k$
\begin{equation*}
\sF(P_kf)(\xi):=\psi_k(\xi)\sF f(\xi),\quad k\in\Z,k\ge-1.
\end{equation*}
For any interval $I$, $P_I$ is defined by
\begin{equation*}
P_If:=\sum_{k\in I\cap\Z\cap[-1,\infty)}P_kf.
\end{equation*}
Introducing the following dyadic decomposition in the Euclidean physical space $\R$
\begin{equation*}
(Q_jf)(x):=\psi_j(x)f(x),\qquad j\in\Z,j\ge-1.
\end{equation*}
Inspired by \cite{IP13}, we define the $Z_{\alpha}$-norm of $f$ as
\begin{equation}\label{Znorm:def}
\|f\|_{Z_\alpha}:=\sum_{j,k\ge-1}2^{j\alpha+N_1k}\|Q_jP_kf\|_{L^2(\R)},
\qquad\alpha\in(0,1/2],\ N_1=12.
\end{equation}
Let
\begin{equation*}
Z_\alpha:=\{f\in L^2(\R):\|f\|_{Z_\alpha}<\infty\}
\end{equation*}
and
$\|(g,h)\|_{Z_\alpha}:=\|g\|_{Z_\alpha}+\|h\|_{Z_\alpha}$.

Through the whole paper, for non-negative quantities $f$ and $g$,
$f\ls g$ and $f\gt g$ mean $f\le Cg$ and $f\ge Cg$ with $C>0$ being a generic constant.

\subsection{Linear dispersive estimate}

\begin{lemma}[Linear dispersive estimate]\label{lem:disp}
For any function $f$, integer $k\ge-1$ and $t\ge0$, it holds that
\begin{equation}\label{disp:estimate}
\|P_ke^{\pm it\Lambda}f\|_{L^\infty(\R)}\ls2^{3k/2}(1+t)^{-1/2}\|P_kf\|_{L^1(\R)}.
\end{equation}
Moreover, for $\beta\in[0,1/2]$ and $j\ge-1$, one has
\begin{equation}\label{loc:disp}
\|P_ke^{\pm it\Lambda}Q_jf\|_{L^\infty(\R)}
\ls2^{k/2+2k\beta+j\beta}(1+t)^{-\beta}\|Q_jf\|_{L^2(\R)}.
\end{equation}
\end{lemma}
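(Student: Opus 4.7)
The plan is to reduce \eqref{disp:estimate} to a stationary-phase bound on the Schwartz kernel of $P_ke^{\pm it\Lambda}$, and then to extract \eqref{loc:disp} by interpolating between an $L^2$ bound and \eqref{disp:estimate} itself.

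For \eqref{disp:estimate}, I would write the convolution kernel
\begin{equation*}
K_k^\pm(t,x):=\int_\R e^{i(x\xi\pm t\Lambda(\xi))}\psi_k(\xi)\,d\xi,
\end{equation*}
so that $(P_ke^{\pm it\Lambda}f)(x)=(K_k^\pm(t,\cdot)*P_kf)(x)$, and apply Young's inequality to reduce the task to showing $\|K_k^\pm(t,\cdot)\|_{L^\infty(\R)}\ls 2^{3k/2}(1+t)^{-1/2}$. For $t\le1$ the trivial bound $\|K_k^\pm\|_{L^\infty}\le\int|\psi_k|\,d\xi\ls 2^k\le 2^{3k/2}$ (with the obvious convention at $k=-1$) is already sharp enough. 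For $t\ge1$, the phase $\Psi(\xi)=x\xi\pm t\Lambda(\xi)$ satisfies $\Psi''(\xi)=\pm t(1+\xi^2)^{-3/2}$, hence $|\Psi''|\gt t\cdot 2^{-3k}$ on $\supp\psi_k$ when $k\ge0$ and $|\Psi''|\gt t$ when $k=-1$. Van der Corput's lemma, together with $\int|\psi_k'|\,d\xi\ls 1$, then gives $|K_k^\pm(t,x)|\ls 2^{3k/2}t^{-1/2}$ uniformly in $x$, completing \eqref{disp:estimate}.

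For \eqref{loc:disp} I would handle the two endpoint exponents. At $\beta=0$, Bernstein's inequality together with the $L^2$-unitarity of $e^{\pm it\Lambda}$ yields
\begin{equation*}
\|P_ke^{\pm it\Lambda}Q_jf\|_{L^\infty}\ls 2^{k/2}\|P_ke^{\pm it\Lambda}Q_jf\|_{L^2}\ls 2^{k/2}\|Q_jf\|_{L^2},
\end{equation*}
matching \eqref{loc:disp} with $\beta=0$. At $\beta=1/2$, the support property $\supp Q_jf\subset\{|x|\ls 2^j\}$ and Cauchy--Schwarz give $\|Q_jf\|_{L^1}\ls 2^{j/2}\|Q_jf\|_{L^2}$, which combined with \eqref{disp:estimate} yields $\|P_ke^{\pm it\Lambda}Q_jf\|_{L^\infty}\ls 2^{3k/2+j/2}(1+t)^{-1/2}\|Q_jf\|_{L^2}$, matching \eqref{loc:disp} with $\beta=1/2$. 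For intermediate $\beta\in(0,1/2)$, the elementary inequality $\min(A,B)\le A^{1-2\beta}B^{2\beta}$ applied to $A=2^{k/2}\|Q_jf\|_{L^2}$ and $B=2^{3k/2+j/2}(1+t)^{-1/2}\|Q_jf\|_{L^2}$ produces exactly the factor $2^{k/2+2k\beta+j\beta}(1+t)^{-\beta}\|Q_jf\|_{L^2}$ in \eqref{loc:disp}.

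The only analytic content is the van der Corput estimate on $K_k^\pm$, and this is routine: the phase is nondegenerate on each dyadic annulus with curvature $\sim t\cdot 2^{-3k}$, producing both the $t^{-1/2}$ decay and the natural $2^{3k/2}$ loss from $(1+\xi^2)^{3/2}$. No serious obstacle is expected, and the proof is short once the Littlewood--Paley framework of Section~2.1 is in place.
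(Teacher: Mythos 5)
Your proof is correct and follows essentially the same route as the paper: the kernel estimate for $t\ge1$ (which the paper obtains by citing Corollary 2.36 and 2.38 of \cite{NS11book}, and you derive directly via van der Corput's second-derivative test, giving $|\Psi''|\sim t\,2^{-3k}$ on $\supp\psi_k$), the trivial bound for $t\le1$, Young's inequality for \eqref{disp:estimate}, and then the two endpoint bounds (Bernstein at $\beta=0$, Cauchy--Schwarz plus \eqref{disp:estimate} at $\beta=1/2$) interpolated via $\|\cdot\|_{L^\infty}=\|\cdot\|_{L^\infty}^{1-2\beta}\|\cdot\|_{L^\infty}^{2\beta}$ for \eqref{loc:disp}. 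The only nitpick is the convolution identity: to convolve against $P_kf$ rather than $f$ you should put $\psi_{[[k]]}$ in the kernel (using $\psi_k=\psi_k\psi_{[[k]]}$, as the paper does), but this does not change any estimate.
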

\begin{proof}
Note that
\begin{equation}\label{proj:proj}
\psi_k(x)=\psi_k(x)\psi_{[[k]]}(x),
\end{equation}
where $[[k]]:=[k-1,k+1]$.
Then one has
\begin{equation}\label{disp:estimate1}
\begin{split}
P_ke^{it\Lambda}f(x)&=(2\pi)^{-1}\int_{\R}\cK_k(t,x-y)P_kf(y)dy,\\
\cK_k(t,x)&:=\int_{\R}e^{i(x\xi+t\w{\xi})}\psi_{[[k]]}(\xi)d\xi.
\end{split}
\end{equation}
According to Corollary 2.36 and 2.38 in \cite{NS11book}, for any $t\ge1$, it holds that
\begin{equation}\label{disp1}
\|\cK_k(t,x)\|_{L^\infty(\R)}\ls2^{3k/2}t^{-1/2}.
\end{equation}
For $0\leq t\le1$, we easily have
\begin{equation*}
\|\cK_k(t,x)\|_{L^\infty(\R)}\ls\int_{\R}\psi_{[[k]]}(\xi)d\xi\ls2^k.
\end{equation*}
This, together with \eqref{disp:estimate1}, \eqref{disp1} and Young's inequality, leads to
\begin{equation*}
\|P_ke^{it\Lambda}f\|_{L^\infty(\R)}\ls\|\cK_k\|_{L^\infty(\R)}\|P_kf\|_{L^1(\R)}
\ls2^{3k/2}(1+t)^{-1/2}\|P_kf\|_{L^1(\R)}.
\end{equation*}
In addition, the estimate of $\|P_ke^{-it\Lambda}f\|_{L^\infty(\R)}$ is analogous.
Thus, \eqref{disp:estimate} is achieved.

Next we turn to the proof of \eqref{loc:disp}.
It follows from the Bernstein inequality such as \cite[Lemma 2.1]{BCD2011}
and the unitarity of $e^{\pm it\Lambda}$ that
\begin{equation*}
\|P_ke^{\pm it\Lambda}Q_jf\|_{L^\infty(\R)}
\ls2^{k/2}\|P_ke^{\pm it\Lambda}Q_jf\|_{L^2(\R)}\ls2^{k/2}\|Q_jf\|_{L^2(\R)}.
\end{equation*}
On the other hand, \eqref{disp:estimate} implies
\begin{equation*}
\|P_ke^{\pm it\Lambda}Q_jf\|_{L^\infty(\R)}
\ls2^{3k/2}(1+t)^{-1/2}\|Q_jf\|_{L^1(\R)}
\ls2^{3k/2+j/2}(1+t)^{-1/2}\|Q_jf\|_{L^2(\R)}.
\end{equation*}
Therefore,
\begin{equation*}
\begin{split}
\|P_ke^{\pm it\Lambda}Q_jf\|_{L^\infty(\R)}
&=(\|P_ke^{\pm it\Lambda}Q_jf\|_{L^\infty(\R)})^{1-2\beta}
(\|P_ke^{\pm it\Lambda}Q_jf\|_{L^\infty(\R)})^{2\beta}\\
&\ls(2^{k/2})^{1-2\beta}(2^{3k/2+j/2}(1+t)^{-1/2})^{2\beta}\|Q_jf\|_{L^2(\R)}\\
&\ls2^{k/2+2k\beta+j\beta}(1+t)^{-\beta}\|Q_jf\|_{L^2(\R)}.
\end{split}
\end{equation*}
\end{proof}

\begin{lemma}
For any function $f$, integer $k\ge-1$, $t\ge0$ and $p\in[2,+\infty]$, it holds that
\begin{equation}\label{disp:Lp}
\|P_ke^{\pm it\Lambda}Q_jf\|_{L^p(\R)}
\ls\Big(\frac{2^{3k+j}}{1+t}\Big)^{1/2-1/p}\|Q_jf\|_{L^2(\R)}.
\end{equation}
\end{lemma}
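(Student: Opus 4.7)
The plan is to prove this $L^p$ dispersive bound by interpolating between the endpoint estimates $p=2$ and $p=\infty$, both of which are already essentially in hand.

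First I would establish the $L^\infty$ endpoint. Specializing the previous Lemma \ref{lem:disp} by replacing $f$ with $Q_jf$ and then using the crude bound $\|Q_jf\|_{L^1(\R)}\ls 2^{j/2}\|Q_jf\|_{L^2(\R)}$ (which is immediate because $Q_jf$ is localized on an interval of length $\ls 2^j$, by Cauchy-Schwarz), one obtains
\begin{equation*}
\|P_k e^{\pm it\Lambda}Q_jf\|_{L^\infty(\R)}\ls 2^{3k/2+j/2}(1+t)^{-1/2}\|Q_jf\|_{L^2(\R)}
=\Big(\frac{2^{3k+j}}{1+t}\Big)^{1/2}\|Q_jf\|_{L^2(\R)}.
\end{equation*}
This is exactly the claimed bound at $p=\infty$. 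For the $L^2$ endpoint, the unitarity of $e^{\pm it\Lambda}$ on $L^2$ together with the $L^2$-boundedness of the Littlewood-Paley projector $P_k$ immediately yield
\begin{equation*}
\|P_k e^{\pm it\Lambda}Q_jf\|_{L^2(\R)}\ls \|Q_jf\|_{L^2(\R)},
\end{equation*}
which matches the right-hand side of \eqref{disp:Lp} at $p=2$ (the exponent $1/2-1/p$ vanishes there).

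Finally I would interpolate. By the standard log-convexity of $L^p$ norms (or equivalently Riesz--Thorin), for any $p\in[2,\infty]$,
\begin{equation*}
\|g\|_{L^p(\R)}\le \|g\|_{L^2(\R)}^{2/p}\,\|g\|_{L^\infty(\R)}^{1-2/p}.
\end{equation*}
Applying this with $g=P_k e^{\pm it\Lambda}Q_jf$ and inserting the two endpoint bounds gives
\begin{equation*}
\|P_k e^{\pm it\Lambda}Q_jf\|_{L^p(\R)}\ls \|Q_jf\|_{L^2(\R)}^{2/p}\left[\Big(\frac{2^{3k+j}}{1+t}\Big)^{1/2}\|Q_jf\|_{L^2(\R)}\right]^{1-2/p}
=\Big(\frac{2^{3k+j}}{1+t}\Big)^{1/2-1/p}\|Q_jf\|_{L^2(\R)},
\end{equation*}
which is the desired estimate.

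No step here is a genuine obstacle; the only mildly delicate point is the use of $\|Q_jf\|_{L^1}\ls 2^{j/2}\|Q_jf\|_{L^2}$ for $j\ge -1$ (including $j=-1$, where $\psi_{-1}$ is supported in $|x|\le 1$, so the constant is harmless). Everything else is a direct invocation of Lemma \ref{lem:disp} together with complex interpolation, so the proof will be essentially two lines beyond what is already written.
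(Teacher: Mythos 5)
Your proof is correct and follows essentially the same route as the paper: both arguments interpolate between the trivial $L^2$ bound (unitarity of $e^{\pm it\Lambda}$) and the $L^1\to L^\infty$ dispersive estimate \eqref{disp:estimate}, using the spatial localization of $Q_jf$ to convert an $L^1$ (or $L^{p'}$) norm into $2^{j(1/2-1/p)}\|Q_jf\|_{L^2}$. The only cosmetic difference is the order of operations --- the paper applies Riesz--Thorin to the operator to get an $L^{p'}\to L^p$ bound and then H\"older on the input side, whereas you fix the function $P_ke^{\pm it\Lambda}Q_jf$, bound it at the two endpoints, and invoke log-convexity of the $L^p$ norm --- so the ingredients and the conclusion are identical.
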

\begin{proof}
Note that
\begin{equation}\label{disp:Lp1}
\|P_ke^{\pm it\Lambda}f\|_{L^2(\R)}=\|P_kf\|_{L^2(\R)}\ls\|f\|_{L^2(\R)}.
\end{equation}
Applying the Riesz-Thorin interpolation theorem to \eqref{disp:estimate} and \eqref{disp:Lp1} yields
\begin{equation*}
\|P_ke^{\pm it\Lambda}f\|_{L^p(\R)}\ls\Big(\frac{2^{3k/2}}{\sqrt{1+t}}\Big)^{1-2/p}\|f\|_{L^{p'}(\R)},
\end{equation*}
where $\frac{1}{p'}=1-\frac1p$.
Therefore, we achieve from \eqref{proj:proj} that
\begin{equation*}
\begin{split}
\|P_ke^{\pm it\Lambda}Q_jf\|_{L^p(\R)}
&\ls\Big(\frac{2^{3k}}{1+t}\Big)^{1/2-1/p}\|Q_jf\|_{L^{p'}(\R)}\\
&\ls\Big(\frac{2^{3k}}{1+t}\Big)^{1/2-1/p}\|\psi_{[[j]]}Q_jf\|_{L^{p'}(\R)}\\
&\ls\Big(\frac{2^{3k}}{1+t}\Big)^{1/2-1/p}\|\psi_{[[j]]}\|_{L^{2p/(p-2)}(\R)}
\|Q_jf\|_{L^2(\R)}\\
&\ls\Big(\frac{2^{3k}}{1+t}\Big)^{1/2-1/p}2^{j(1/2-1/p)}\|Q_jf\|_{L^2(\R)},
\end{split}
\end{equation*}
which derives \eqref{disp:Lp}.
\end{proof}

\subsection{Two technical Lemmas}

\begin{lemma}
For $\mu_1,\mu_2,\mu_3=\pm$, define
\begin{equation}\label{phase:def}
\begin{split}
\Phi_{\mu_1\mu_2}(\xi_1,\xi_2)&:=-\Lambda(\xi_1+\xi_2)+\mu_1\Lambda(\xi_1)+\mu_2\Lambda(\xi_2),\\
\Phi_{\mu_1\mu_2\mu_3}(\xi_1,\xi_2,\xi_3)&:=-\Lambda(\xi_1+\xi_2+\xi_3))+\mu_1\Lambda(\xi_1)
+\mu_2\Lambda(\xi_2)+\mu_3\Lambda(\xi_3).
\end{split}
\end{equation}
For $\mu_1,\mu_2=\pm$ and $l\ge1$, one has
\begin{equation}\label{2phase:bdd1}
|\Phi^{-1}_{\mu_1\mu_2}(\xi_1,\xi_2)|\ls1+\min\{|\xi_1+\xi_2|,|\xi_1|,|\xi_2|\},
|\p_{\xi_1,\xi_2}^l\Phi_{\mu_1\mu_2}(\xi_1,\xi_2)|\ls\min\{1,|\Phi_{\mu_1\mu_2}(\xi_1,\xi_2)|\}
\end{equation}
and
\begin{equation}\label{2phase:bdd2}
|\p_{\xi_1,\xi_2}^l\Phi^{-1}_{\mu_1\mu_2}(\xi_1,\xi_2)|\ls|\Phi^{-1}_{\mu_1\mu_2}(\xi_1,\xi_2)|.
\end{equation}
For $(\mu_1,\mu_2,\mu_3)\in A_\Phi^{good}:=\{(+++),(+--),(---)\}$, one has
\begin{equation}\label{3phase:bdd}
|\Phi^{-1}_{\mu_1\mu_2\mu_3}(\xi_1,\xi_2,\xi_3)|\ls1+\min\{|\xi_1+\xi_2+\xi_3|,|\xi_1|,|\xi_2|,|\xi_3|\}.
\end{equation}
\end{lemma}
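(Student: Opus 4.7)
The plan is to establish each inequality by direct algebraic manipulation of the closed form $\Lambda(\xi)=\sqrt{1+\xi^{2}}$, organized by a short case analysis on the sign pattern and on which of the frequencies realises the minimum. The basic tools are the two rationalization identities
\[
\Lambda(a)-\Lambda(b)=\frac{a^{2}-b^{2}}{\Lambda(a)+\Lambda(b)},\qquad \Lambda(a)\Lambda(b)-ab=\frac{1+a^{2}+b^{2}}{\Lambda(a)\Lambda(b)+ab},
\]
together with the elementary derivative decay $|\Lambda^{(l)}(\xi)|\ls\Lambda(\xi)^{1-l}$ for $l\ge 1$.

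For \eqref{2phase:bdd1}, the $(-,-)$ case is trivial since $|\Phi_{--}|\ge 3$. For $(+,+)$ I would multiply $\Phi_{++}$ by its conjugate to obtain
\[
\Phi_{++}\cdot\bigl(\Lambda(\xi_{1})+\Lambda(\xi_{2})+\Lambda(\xi_{1}+\xi_{2})\bigr)=1+2\bigl(\Lambda(\xi_{1})\Lambda(\xi_{2})-\xi_{1}\xi_{2}\bigr),
\]
and then rationalize the right-hand side once more via the second identity above; a short subcase analysis on the sign of $\xi_{1}\xi_{2}$ and on which of $|\xi_{1}|,|\xi_{2}|,|\xi_{1}+\xi_{2}|$ is smallest delivers the lower bound $|\Phi_{++}|\gt(1+\min\{\cdots\})^{-1}$. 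The mixed signs reduce to $(+,+)$ via the identity $\Phi_{+-}(\xi_{1},\xi_{2})=-\Phi_{++}(\xi_{1}+\xi_{2},-\xi_{2})$ (and its $(-,+)$ analogue), which permutes the set $\{|\xi_{1}|,|\xi_{2}|,|\xi_{1}+\xi_{2}|\}$. The bound $|\p^{l}\Phi|\ls 1$ in \eqref{2phase:bdd1} is immediate from $|\Lambda^{(l)}|\ls 1$; the upgrade to $|\p^{l}\Phi|\ls|\Phi|$ in the regime $|\Phi|\ll 1$ uses the previous step to force $|\xi_{1}|,|\xi_{2}|,|\xi_{1}+\xi_{2}|$ to be simultaneously large and comparable, after which a mean value argument on $\p_{\xi_{1}}\Phi_{\mu_{1}\mu_{2}}=\mu_{1}\Lambda'(\xi_{1})-\Lambda'(\xi_{1}+\xi_{2})$ combined with $\Lambda''(y)=\Lambda(y)^{-3}$ yields the matching decay. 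Granted \eqref{2phase:bdd1}, estimate \eqref{2phase:bdd2} is a formal consequence of Fa\`a di Bruno's formula: $\p^{l}\Phi^{-1}$ is a finite sum of terms $\Phi^{-r-1}\prod_{i}\p^{j_{i}}\Phi$ with $\sum_{i}j_{i}=l$, each bounded by $|\Phi|^{-r-1}\min(1,|\Phi|)^{r}\le|\Phi^{-1}|$.

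For the three-phase bound \eqref{3phase:bdd}, $(---)$ is immediate ($|\Phi_{---}|\ge 4$) and $(+++)$ parallels $(++)$ after multiplying by $\Lambda(\xi_{1})+\Lambda(\xi_{2})+\Lambda(\xi_{3})+\Lambda(\xi_{1}+\xi_{2}+\xi_{3})$. The main obstacle is the $(+--)$ case, which I expect to require the most care. Here I would introduce the auxiliary variable $\xi_{0}:=-(\xi_{1}+\xi_{2}+\xi_{3})$, under which $\Phi_{+--}=\Lambda(\xi_{1})-\Lambda(\xi_{0})-\Lambda(\xi_{2})-\Lambda(\xi_{3})$ is manifestly symmetric in the three ``$-$''-variables $\xi_{0},\xi_{2},\xi_{3}$. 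Rationalizing $\Lambda(\xi_{1})-\Lambda(\xi_{0})=(\xi_{1}-\xi_{0})(\xi_{1}+\xi_{0})/(\Lambda(\xi_{1})+\Lambda(\xi_{0}))$ and using the elementary inequality $|\xi_{1}-\xi_{0}|\le\Lambda(\xi_{1})+\Lambda(\xi_{0})$ gives $|\Lambda(\xi_{1})-\Lambda(\xi_{0})|\le|\xi_{1}+\xi_{0}|=|\xi_{2}+\xi_{3}|\le|\xi_{2}|+|\xi_{3}|$, whence $|\Phi_{+--}|\ge(\Lambda(\xi_{2})-|\xi_{2}|)+(\Lambda(\xi_{3})-|\xi_{3}|)\ge(2\Lambda(\xi_{2}))^{-1}+(2\Lambda(\xi_{3}))^{-1}$, i.e.\ $|\Phi_{+--}^{-1}|\ls 1+\min(|\xi_{2}|,|\xi_{3}|)$. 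Taking the best such bound over the three permutations of $\{\xi_{0},\xi_{2},\xi_{3}\}$ yields $|\Phi_{+--}^{-1}|\ls 1+\min(|\xi_{0}|,|\xi_{2}|,|\xi_{3}|)$, and in the remaining regime where the overall minimum of the four frequencies is $|\xi_{1}|$ one has $\Lambda(\xi_{1})\le\min(\Lambda(\xi_{0}),\Lambda(\xi_{2}),\Lambda(\xi_{3}))$, hence $|\Phi_{+--}|\ge 2\Lambda(\xi_{1})\ge 2$ so the desired estimate holds trivially.
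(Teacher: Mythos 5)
Your proposal is correct and is genuinely more self-contained than the paper's treatment: the paper disposes of this lemma by citing Lemma 5.1 and estimate (4.47) of \cite{IP13} (proved there for $\xi\in\R^2$) and noting that the one-dimensional case follows by inspection, whereas you verify everything directly in 1D. Your building blocks---the two rationalization identities, the conjugate identity for $\Phi_{++}$, the reduction $\Phi_{+-}(\xi_1,\xi_2)=-\Phi_{++}(\xi_1+\xi_2,-\xi_2)$ for the mixed signs, the symmetrizing variable $\xi_0=-(\xi_1+\xi_2+\xi_3)$ for the $(+--)$ phase, and Fa\`a di Bruno for \eqref{2phase:bdd2}---all check out, and the approach buys a reader a verification that stays inside the paper instead of resting on a dimensional-reduction remark. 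One misstatement in the narrative for the derivative bound: smallness of $|\Phi_{\mu_1\mu_2}|$ forces $\min\{|\xi_1|,|\xi_2|,|\xi_1+\xi_2|\}$ to be large, but it does \emph{not} make the three frequencies comparable (for instance $\xi_1=K$, $\xi_2=L$ with $1\ll K\ll L$, both positive, gives $|\Phi_{++}|\approx K^{-1}\ll1$ while $|\xi_1|\ll|\xi_2|$). Comparability is not needed: once the minimum is large, the integral-of-$\Lambda''$ argument you invoke (using $\Lambda''=\Lambda^{-3}$) already gives $|\p_{\xi_1}\Phi_{\mu_1\mu_2}|\ls\min\{|\xi_1|,|\xi_1+\xi_2|\}^{-2}$ in the regime $|\Phi_{\mu_1\mu_2}|\ll1$, and this is $\ls|\Phi_{\mu_1\mu_2}|$ by the lower bound on $|\Phi_{\mu_1\mu_2}|$ from the first half of \eqref{2phase:bdd1}.
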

\begin{proof}
The proof of \eqref{2phase:bdd1} can be found in Lemma 5.1 of \cite{IP13}. Meanwhile,
\eqref{2phase:bdd2} is a consequence of \eqref{2phase:bdd1}.
For inequality \eqref{3phase:bdd}, see (4.47) in \cite{IP13}.
Note that although all these related inequalities in \cite{IP13} are derived for $\xi_1,\xi_2,\xi_3\in\R^2$,
it is easy to check that these inequalities still hold for $\xi_1,\xi_2,\xi_3\in\R$.
\end{proof}

\begin{lemma}[H\"{o}lder inequality]\label{lem:Holder}
For any functions $f_1,f_2,f_3,f_4$ on $\R$ and $p,q_1,q_2,q_3,q_4\in[1,\infty]$, one has
\begin{equation}\label{Holder}
\begin{split}
&\Big\|\iint_{\R^2}K(x-x_1,x-x_2)f_1(x_1)f_2(x_2)dx_1dx_2\Big\|_{L^p_x(\R)}\\
\le&\|K(\cdot,\cdot)\|_{L^1(\R^2)}\|f_1\|_{L^{q_1}}\|f_2\|_{L^{q_2}},
\qquad\frac{1}{p}=\frac{1}{q_1}+\frac{1}{q_2},\\
&\Big\|\iiint_{\R^3}K(x-x_1,x-x_2,x-x_3)f_1(x_1)f_2(x_2)f_3(x_3)dx_1dx_2dx_3\Big\|_{L^p_x(\R)}\\
\le&\|K(\cdot,\cdot,\cdot)\|_{L^1(\R^3)}\|f_1\|_{L^{q_1}}\|f_2\|_{L^{q_2}}
\|f_3\|_{L^{q_3}},
\qquad\frac{1}{p}=\frac{1}{q_1}+\frac{1}{q_2}+\frac{1}{q_3},\\
&\Big\|\iiiint_{\R^4}K(x-x_1,x-x_2,x-x_3,x-x_4)f_1(x_1)f_2(x_2)f_3(x_3)f_4(x_4)
dx_1dx_2dx_3dx_4\Big\|_{L^p_x(\R)}\\
\le&\|K(\cdot,\cdot,\cdot,\cdot)\|_{L^1(\R^4)}\|f_1\|_{L^{q_1}}\|f_2\|_{L^{q_2}}
\|f_3\|_{L^{q_3}}\|f_4\|_{L^{q_4}},
\qquad\frac{1}{p}=\frac{1}{q_1}+\frac{1}{q_2}+\frac{1}{q_3}+\frac{1}{q_4}.
\end{split}
\end{equation}
\end{lemma}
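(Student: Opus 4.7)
The three inequalities all have the same structure, so the plan is to give the argument for the bilinear case and observe that the trilinear and quadrilinear cases are identical up to adding dummy variables. The key idea is that each multilinear operator is really a multi-convolution in disguise, so after a translation change of variables the norm estimate reduces to Minkowski's integral inequality followed by ordinary H\"older.

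First, I would perform the substitution $y_i = x - x_i$ (for $i=1,2$) in the bilinear integral. Since the Lebesgue measure is translation invariant and the Jacobian is $1$, the operator can be rewritten as
\begin{equation*}
\iint_{\R^2} K(x - x_1, x - x_2) f_1(x_1) f_2(x_2)\, dx_1 dx_2
= \iint_{\R^2} K(y_1, y_2) f_1(x - y_1) f_2(x - y_2)\, dy_1 dy_2.
\end{equation*}
In this form the dependence on $x$ has been transferred entirely into the factors $f_i(x - y_i)$, while the kernel $K(y_1,y_2)$ no longer depends on $x$.

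Next I would take the $L^p_x(\R)$ norm of both sides and apply Minkowski's integral inequality to pull the norm inside the double integral against $|K|$, giving
\begin{equation*}
\Big\| \iint_{\R^2} K(y_1, y_2) f_1(\cdot - y_1) f_2(\cdot - y_2)\, dy_1 dy_2 \Big\|_{L^p_x}
\le \iint_{\R^2} |K(y_1, y_2)|\, \| f_1(\cdot - y_1) f_2(\cdot - y_2) \|_{L^p_x}\, dy_1 dy_2.
\end{equation*}
Then by ordinary H\"older with the exponent relation $1/p = 1/q_1 + 1/q_2$ together with the translation invariance $\|f_i(\cdot - y_i)\|_{L^{q_i}} = \|f_i\|_{L^{q_i}}$, the inner norm is bounded uniformly in $y_1,y_2$ by $\|f_1\|_{L^{q_1}} \|f_2\|_{L^{q_2}}$. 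The remaining integral of $|K|$ gives $\|K\|_{L^1(\R^2)}$, yielding the claimed estimate.

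The trilinear and quadrilinear inequalities follow by the same scheme: after the substitutions $y_i = x - x_i$ for $i=1,\dots,n$ (with $n=3$ or $n=4$), Minkowski pulls the $L^p_x$ norm inside the $n$-fold integral against $|K|$, and H\"older with the relation $1/p = \sum_i 1/q_i$ produces the product $\prod_i \|f_i\|_{L^{q_i}}$ independently of the $y_i$'s. There is no real obstacle here; the only point that requires any care is verifying that the endpoint exponents $q_i = \infty$ (or $p = \infty$) are admissible, which follows since both Minkowski's integral inequality and H\"older extend to the full range $[1,\infty]$.
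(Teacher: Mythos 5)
Your proof is correct and is exactly the argument the paper has in mind: the paper simply states that \eqref{Holder} follows directly from the Minkowski and H\"older inequalities (or cites Lemma 2.3 of \cite{OTT96}), and your change of variables plus Minkowski-then-H\"older fills in precisely those details. No issues.
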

\begin{proof}
\eqref{Holder} can be directly derived from the Minkowski inequality and the H\"{o}lder inequality,
or see Lemma 2.3 in \cite{OTT96}.
\end{proof}

Denote
\begin{equation}\label{proj:decom}
\begin{split}
\cX_k&=\cX_k^1\cup\cX_k^2,\qquad \cY_k=\cY_k^1\cup\cY_k^2,\\
\cX_k^1&=\{(k_1,k_2)\in\Z^2: k_1,k_2\ge-1,|\max\{k_1,k_2\}-k|\le8\},\\
\cX_k^2&=\{(k_1,k_2)\in\Z^2: k_1,k_2\ge-1,\max\{k_1,k_2\}\ge k+8,|k_1-k_2|\le8\},\\
\cY_k^1&=\{(k_1,k_2,k_3)\in\Z^3: k_1,k_2,k_3\ge-1,|\max\{k_1,k_2,k_3\}-k|\le4\},\\
\cY_k^2&=\{(k_1,k_2,k_3)\in\Z^3: k_1,k_2,k_3\ge-1,k+4\le\max\{k_1,k_2,k_3\}
\le\med\{k_1,k_2,k_3\}+4\}.
\end{split}
\end{equation}
As in \cite[page 784,799]{IP13}, if $P_k(P_{k_1}fP_{k_2}g)\neq0$ and $P_k(P_{k_1}fP_{k_2}gP_{k_3}h)\neq0$,
one then has $(k_1,k_2)\in\cX_k$ and $(k_1,k_2,k_3)\in\cY_k$, respectively.

\section{Reduction}

\subsection{First normal form transformation}\label{1st:nf}
Based on \eqref{2phase:bdd1}, we are devoted to transforming the quadratic nonlinearity
in \eqref{KG} into the cubic one. Denote
\begin{equation}\label{profile:def}
U_\pm:=(\p_t\pm i\Lambda)u,\quad U:=U_+.
\end{equation}
For functions $m_2(\xi_1,\xi_2):\R^2\rightarrow\C$ and $m_3(\xi_1,\xi_2,\xi_3):\R^3\rightarrow\C$,
define the following multi-linear pseudoproduct operators:
\begin{equation}\label{m-linear:def}
\begin{split}
T_{m_2}(f,g)&:=\sF_\xi^{-1}\Big((2\pi)^{-2}\int_{\R}
m_2(\xi-\eta,\eta)\hat f(\xi-\eta)\hat g(\eta)d\eta\Big),\\
T_{m_3}(f,g,h)&:=\sF_\xi^{-1}\Big((2\pi)^{-3}\iint_{\R^2}
m_3(\xi-\eta,\eta-\zeta,\zeta)\hat f(\xi-\eta)\hat g(\eta-\zeta)\hat h(\zeta)d\eta d\zeta\Big).
\end{split}
\end{equation}
Then \eqref{KG} is reduced to
\begin{equation}\label{profile:eqn1}
(\p_t-i\Lambda)U=\cN(U),\qquad\p_tV(t,x)=e^{-it\Lambda}\cN(U),
\end{equation}
where $V=V_+$ and $V_-$ are defined in \eqref{intro:profile:def}, $\cN(U)$ is given by
\begin{equation}\label{nonlinear}
\cN(U):=\sum_{\mu_1,\mu_2=\pm}T_{a_{\mu_1\mu_2}}(U_{\mu_1},U_{\mu_2})
+\sum_{\mu_1,\mu_2,\mu_3=\pm}T_{b_{\mu_1\mu_2\mu_3}}(U_{\mu_1},U_{\mu_2},U_{\mu_3})
+\cN_4(U),
\end{equation}
here $a_{\mu_1\mu_2}=a_{\mu_1\mu_2}(\xi_1,\xi_2)$ is a linear combination of the products of
the following terms
\begin{equation}\label{symbol:a}
1,\frac{1}{\Lambda(\xi_1)},\frac{1}{\Lambda(\xi_2)},\frac{\xi_1}{\Lambda(\xi_1)},
\frac{\xi_2}{\Lambda(\xi_2)},
\end{equation}
$b_{\mu_1\mu_2\mu_3}=b_{\mu_1\mu_2\mu_3}(\xi_1,\xi_2,\xi_3)$ is a linear combination of the products of
\begin{equation}\label{symbol:b}
1,\frac{1}{\Lambda(\xi_1)},\frac{1}{\Lambda(\xi_2)},\frac{1}{\Lambda(\xi_3)},
\frac{\xi_1}{\Lambda(\xi_1)},\frac{\xi_2}{\Lambda(\xi_2)},\frac{\xi_3}{\Lambda(\xi_3)},
\end{equation}
and the nonlinearity $\cN_4(U)$ is at least quartic in $U$.

Applying the Fourier transformation to \eqref{profile:eqn1} and solving the resulting equation yield
\begin{equation}\label{profile:eqn3}
\begin{split}
\hat V(t,\xi)
&=\hat V(0,\xi)+\int_0^te^{-is\Lambda(\xi)}\widehat{\cN_4(U)}(s,\xi)ds\\
&\quad+\sum_{\mu_1,\mu_2=\pm}\int_0^t\int_{\R}e^{is\Phi_{\mu_1\mu_2}}
a_{\mu_1\mu_2}\hat V_{\mu_1}(s,\xi-\eta)\hat V_{\mu_2}(s,\eta)d\eta ds,\\
&\quad+\sum_{\mu_1,\mu_2,\mu_3=\pm}\int_0^t\iint_{\R^2}
e^{is\Phi_{\mu_1\mu_2\mu_3}}b_{\mu_1\mu_2\mu_3}\hat V_{\mu_1}(s,\xi-\eta)
\hat V_{\mu_2}(s,\eta-\zeta)\hat V_{\mu_3}(s,\zeta)d\eta d\zeta ds,
\end{split}
\end{equation}
where $\Phi_{\mu_1\mu_2}$ and $\Phi_{\mu_1\mu_2\mu_3}$ are defined by \eqref{phase:def}.

Thanks to \eqref{2phase:bdd1}, through integrating by parts in $s$ for the second line of \eqref{profile:eqn3}, we arrive at
\begin{equation*}
\begin{split}
\hat V(t,\xi)
&=\hat V(0,\xi)+\int_0^te^{-is\Lambda(\xi)}\widehat{\cN_4(U)}(s,\xi)ds\\
&\quad-i\sum_{\mu_1,\mu_2=\pm}\sF(e^{-is\Lambda}
T_{\Phi_{\mu_1\mu_2}^{-1}a_{\mu_1\mu_2}}(U_{\mu_1},U_{\mu_2}))(s,\xi)\Big|_{s=0}^t\\
&\quad+i\sum_{\mu_1,\mu_2=\pm}\int_0^t\int_{\R}e^{is\Phi_{\mu_1\mu_2}}
\Phi_{\mu_1\mu_2}^{-1}a_{\mu_1\mu_2}\frac{d}{ds}\Big(\hat V_{\mu_1}(s,\xi-\eta)
\hat V_{\mu_2}(s,\eta)\Big)d\eta ds\\
&\quad+\sum_{\mu_1,\mu_2,\mu_3=\pm}\int_0^t\iint_{\R^2}
e^{is\Phi_{\mu_1\mu_2\mu_3}}b_{\mu_1\mu_2\mu_3}
\hat V_{\mu_1}(s,\xi-\eta)\hat V_{\mu_2}(s,\eta-\zeta)\hat V_{\mu_3}(s,\zeta)d\eta d\zeta ds.
\end{split}
\end{equation*}
Returning to the physical space, one has
\begin{equation}\label{profile:eqn4}
\begin{split}
&V(t,x)=V(0,x)+\int_0^te^{-is\Lambda}\cN_4(U)ds
-i\sum_{\mu_1,\mu_2=\pm}e^{-is\Lambda}
T_{\Phi_{\mu_1\mu_2}^{-1}a_{\mu_1\mu_2}}(U_{\mu_1},U_{\mu_2})\Big|_{s=0}^t\\
&+i\sum_{\mu_1,\mu_2=\pm}\int_0^te^{-is\Lambda}\Big\{
T_{\Phi_{\mu_1\mu_2}^{-1}a_{\mu_1\mu_2}}(e^{is\mu_1\Lambda}\p_tV_{\mu_1},U_{\mu_2})
+T_{\Phi_{\mu_1\mu_2}^{-1}a_{\mu_1\mu_2}}(U_{\mu_1},e^{is\mu_2\Lambda}\p_tV_{\mu_2})\Big\}ds\\
&+\sum_{\mu_1,\mu_2,\mu_3=\pm}\int_0^te^{-is\Lambda}
T_{b_{\mu_1\mu_2\mu_3}}(U_{\mu_1},U_{\mu_2},U_{\mu_3})ds.
\end{split}
\end{equation}
Set
\begin{equation}\label{cN3:def}
\begin{split}
\cN_3(U)&=\cN_4(U)+\sum_{\mu_1,\mu_2,\mu_3=\pm}
T_{b_{\mu_1\mu_2\mu_3}}(U_{\mu_1},U_{\mu_2},U_{\mu_3}),\\ \cN_{3,+}(U)&=\cN_3(U),\qquad\cN_{3,-}(U)=\overline{\cN_3(U)}.
\end{split}
\end{equation}
For $\nu=\pm$,
\begin{equation}\label{profile:eqn5}
\p_tV_\nu=e^{-it\nu\Lambda}(\cN_{3,\nu}(U)+
\sum_{\mu_1,\mu_2=\pm}T_{a^I_{\nu\mu_1\mu_2}}(U_{\mu_1},U_{\mu_2})),
\end{equation}
where
\begin{equation}\label{symbol:a'}
a^I_{+\mu_1\mu_2}=a_{\mu_1\mu_2},\qquad
a^I_{-\mu_1\mu_2}(\xi_1,\xi_2)=\overline{a_{-\mu_1,-\mu_2}(-\xi_1,-\xi_2)}.
\end{equation}
Substituting \eqref{profile:eqn5} into \eqref{profile:eqn4} derives
\begin{equation}\label{profile:eqn6}
\begin{split}
V(t,x)
&=V(0,x)+\int_0^te^{-is\Lambda}\cN^I_4(U)ds\\
&\quad-i\sum_{\mu_1,\mu_2=\pm}e^{-is\Lambda}
T_{\Phi_{\mu_1\mu_2}^{-1}a_{\mu_1\mu_2}}(U_{\mu_1},U_{\mu_2})(s,x)\Big|_{s=0}^t\\
&\quad+\sum_{(\mu_1,\mu_2,\mu_3)\in A_\Phi}
\int_0^te^{-is\Lambda}T_{m_{\mu_1\mu_2\mu_3}}(U_{\mu_1},U_{\mu_2},U_{\mu_3})ds,
\end{split}
\end{equation}
where $A_\Phi:=\{(+++),(++-),(+--),(---)\}$,
\begin{equation}\label{cN4:def}
\cN^I_4(U)=\cN_4(U)+\sum_{\mu,\nu=\pm}
(T_{\Phi_{\mu\nu}^{-1}a_{\mu\nu}}(\cN_{3,\mu}(U),U_\nu)
+T_{\Phi_{\mu\nu}^{-1}a_{\mu\nu}}(U_\mu,\cN_{3,\nu}(U)))
\end{equation}
and
\begin{equation}\label{symbol:m}
\begin{split}
m_{\mu_1\mu_2\mu_3}&=m^I_{\mu_1\mu_2\mu_3}+m^{II}_{\mu_1\mu_2\mu_3}\\
\end{split}
\end{equation}
with
\begin{equation}\label{symbol:m-0}
\begin{split}
m^I_{+++}(\xi_1,\xi_2,\xi_3)&=b^I_{+++}(\xi_1,\xi_2,\xi_3),\\
m^I_{++-}(\xi_1,\xi_2,\xi_3)&=b^I_{++-}(\xi_1,\xi_2,\xi_3)
+b^I_{+-+}(\xi_1,\xi_3,\xi_2)+b^I_{-++}(\xi_3,\xi_2,\xi_1),\\
m^I_{+--}(\xi_1,\xi_2,\xi_3)&=b^I_{+--}(\xi_1,\xi_2,\xi_3)
+b^I_{-+-}(\xi_2,\xi_1,\xi_3)+b^I_{--+}(\xi_3,\xi_2,\xi_1),\\
m^I_{---}(\xi_1,\xi_2,\xi_3)&=b^I_{---}(\xi_1,\xi_2,\xi_3),\\
m^{II}_{+++}(\xi_1,\xi_2,\xi_3)&=b_{+++}(\xi_1,\xi_2,\xi_3),\\
m^{II}_{++-}(\xi_1,\xi_2,\xi_3)&=b_{++-}(\xi_1,\xi_2,\xi_3)
+b_{+-+}(\xi_1,\xi_3,\xi_2)+b_{-++}(\xi_3,\xi_2,\xi_1),\\
m^{II}_{+--}(\xi_1,\xi_2,\xi_3)&=b_{+--}(\xi_1,\xi_2,\xi_3)
+b_{-+-}(\xi_2,\xi_1,\xi_3)+b_{--+}(\xi_3,\xi_2,\xi_1),\\
m^{II}_{---}(\xi_1,\xi_2,\xi_3)&=b_{---}(\xi_1,\xi_2,\xi_3),\\
b^I_{\sigma\mu_1\mu_2}(\xi_1,\xi_2,\xi_3)
&=i\sum_{\mu=\pm}(\Phi_{\mu\sigma}^{-1}a_{\mu\sigma})(\xi_2+\xi_3,\xi_1)
a^I_{\mu\mu_1\mu_2}(\xi_2,\xi_3)\\
&\quad+i\sum_{\nu=\pm}(\Phi_{\sigma\nu}^{-1}a_{\sigma\nu})(\xi_1,\xi_2+\xi_3)
a^I_{\nu\mu_1\mu_2}(\xi_2,\xi_3),\quad\sigma,\mu_1,\mu_2=\pm.
\end{split}
\end{equation}

\subsection{Partial second normal form transformation}\label{2nd:nf}

We require the second normal form to transform some parts of the cubic nonlinearity in
\eqref{profile:eqn6} into the quartic one.
Note that if $\max\{k_1,k_2\}\le k_3-O(1)$ with $O(1)$ being a fixed and large enough number, one then has
\begin{equation}\label{badphase:bdd}
\begin{split}
|\Phi_{++-}(\xi_1,\xi_2,\xi_3)|&=|-\Lambda(\xi_1+\xi_2+\xi_3)+\Lambda(\xi_1)
+\Lambda(\xi_2)-\Lambda(\xi_3)|\\
&\ge\Lambda(\xi_3)/2\approx2^{k_3},
\end{split}
\end{equation}
where $|\xi_l|\approx2^{k_l},l=1,2,3$.
Acting $P_k$ to \eqref{profile:eqn6}, together with \eqref{proj:decom}, yields that
\begin{equation}\label{profile:eqn7}
\begin{split}
&P_kV(t,x)
=P_kV(0,x)+\int_0^te^{-is\Lambda}P_k\cN^I_4(U)ds
-i\sum_{\mu_1,\mu_2=\pm}e^{-is\Lambda}P_k
T_{\Phi_{\mu_1\mu_2}^{-1}a_{\mu_1\mu_2}}(U_{\mu_1},U_{\mu_2})\Big|_{s=0}^t\\
&\qquad+\sum_{(\mu_1,\mu_2,\mu_3)\in A_\Phi^{good}}\sum_{(k_1,k_2,k_3)\in\cY_k}
\int_0^te^{-is\Lambda}P_kT_{m_{\mu_1\mu_2\mu_3}}
(P_{k_1}U_{\mu_1},P_{k_2}U_{\mu_2},P_{k_3}U_{\mu_3})ds\\
&\qquad+\sum_{\substack{(k_1,k_2,k_3)\in\cY_k,\\ \max\{k_1,k_2\}\le k_3-O(1)}}
\int_0^te^{-is\Lambda}P_kT_{m_{++-}}(P_{k_1}U,P_{k_2}U,P_{k_3}U_-)ds\\
&\qquad+\sum_{\substack{(k_1,k_2,k_3)\in\cY_k,\\ \max\{k_1,k_2\}\ge k_3-O(1)}}
\int_0^te^{-is\Lambda}P_kT_{m_{++-}}(P_{k_1}U,P_{k_2}U,P_{k_3}U_-)ds,
\end{split}
\end{equation}
where $A_\Phi^{good}:=\{(+++),(+--),(---)\}$.

Analogously to \eqref{profile:eqn4}, from \eqref{3phase:bdd} and \eqref{badphase:bdd}, we
can transform the cubic nonlinearities in the second and third lines of \eqref{profile:eqn7} into
the corresponding quartic form. Then
\begin{equation}\label{profile:eqn8}
P_kV(t,x)=P_kV(0,x)+\cB_k+\int_0^t(\cC_k(s)+\cQ_k(s)+P_ke^{-is\Lambda}\cN^I_4(U))ds,
\end{equation}
where the boundary term $\cB_k$, the cubic nonlinearity $\cC_k(s)$ and the quartic nonlinearity  $\cQ_k(s)$ are
respectively
\begin{equation}\label{bdry:def}
\begin{split}
\cB_k&:=-i\sum_{\mu_1,\mu_2=\pm}e^{-is\Lambda}P_k
T_{\Phi_{\mu_1\mu_2}^{-1}a_{\mu_1\mu_2}}(U_{\mu_1},U_{\mu_2})\Big|_{s=0}^t\\
&-i\sum_{(\mu_1,\mu_2,\mu_3)\in A_\Phi^{good}}\sum_{(k_1,k_2,k_3)\in\cY_k}
e^{-is\Lambda}P_kT_{\Phi_{\mu_1\mu_2\mu_3}^{-1}m_{\mu_1\mu_2\mu_3}}
(P_{k_1}U_{\mu_1},P_{k_2}U_{\mu_2},P_{k_3}U_{\mu_3})\Big|_{s=0}^t\\
&-i\sum_{\substack{(k_1,k_2,k_3)\in\cY_k,\\ \max\{k_1,k_2\}\le k_3-O(1)}}
e^{-is\Lambda}P_kT_{\Phi_{++-}^{-1}m_{++-}}(P_{k_1}U,P_{k_2}U,P_{k_3}U_-)\Big|_{s=0}^t,
\end{split}
\end{equation}
\begin{equation}\label{cubic:def}
\cC_k(t):=\sum_{\substack{(k_1,k_2,k_3)\in\cY_k,\\ \max\{k_1,k_2\}\ge k_3-O(1)}}
e^{-it\Lambda}P_kT_{m_{++-}}(P_{k_1}U,P_{k_2}U,P_{k_3}U_-),
\end{equation}
\begin{equation}\label{quartic:def}
\begin{split}
&\cQ_k(t):=i\sum_{\substack{(k_1,k_2,k_3)\in\cY_k,\\
(\mu_1,\mu_2,\mu_3)\in A_\Phi^{good}}}
P_ke^{-it\Lambda}\Big\{T_{\Phi_{\mu_1\mu_2\mu_3}^{-1}m_{\mu_1\mu_2\mu_3}}
(e^{it\mu_1\Lambda}P_{k_1}\p_tV_{\mu_1},P_{k_2}U_{\mu_2},P_{k_3}U_{\mu_3})\\
&\quad+T_{\Phi_{\mu_1\mu_2\mu_3}^{-1}m_{\mu_1\mu_2\mu_3}}
(P_{k_1}U_{\mu_1},e^{is\mu_2\Lambda}P_{k_2}\p_tV_{\mu_2},P_{k_3}U_{\mu_3})\\
&\quad+T_{\Phi_{\mu_1\mu_2\mu_3}^{-1}m_{\mu_1\mu_2\mu_3}}
(P_{k_1}U_{\mu_1},P_{k_2}U_{\mu_2},e^{it\mu_3\Lambda}P_{k_3}\p_tV_{\mu_3})\Big\}\\
&\quad +i\sum_{\substack{(k_1,k_2,k_3)\in\cY_k,\\ \max\{k_1,k_2\}\le k_3-O(1)}}
P_k e^{-it\Lambda}\Big\{T_{\Phi_{++-}^{-1}m_{++-}}
(e^{it\Lambda}P_{k_1}\p_tV,P_{k_2}U,P_{k_3}U_-)\\
&\quad+T_{\Phi_{++-}^{-1}m_{++-}}(P_{k_1}U,e^{it\Lambda}P_{k_2}\p_tV,P_{k_3}U_-)
+T_{\Phi_{++-}^{-1}m_{++-}}(P_{k_1}U,P_{k_2}U,e^{-it\Lambda}P_{k_3}\p_tV_-)\Big\}.
\end{split}
\end{equation}

\section{Energy estimate and continuity of $Z_{\alpha}$-norm}

\subsection{Energy estimate}

\begin{lemma}\label{lem:energy}
Let $N\ge27$. Suppose that $U$ is defined by \eqref{profile:def} and $\|U(t)\|_{H^N(\R)}$ is small, one then
has that for $t\ge0$,
\begin{equation}\label{energy}
\begin{split}
\|U(t)\|_{H^N(\R)}&\ls\|U(0)\|_{H^N(\R)}+\|U(0)\|^2_{H^N(\R)}+\|U(0)\|^3_{H^N(\R)}\\
&\quad+\int_0^t\sum_{k\ge-1}2^{k(7+1/4)}\|P_kU(s)\|_{L^\infty}
\|U(s)\|_{W^{1,\infty}}\|U(s)\|_{H^N(\R)}ds.
\end{split}
\end{equation}
\end{lemma}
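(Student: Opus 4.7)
The plan is to bypass the quadratic nonlinearity by working with the normal-form reformulation \eqref{profile:eqn8} from Section 3. Since $e^{-it\Lambda}$ is an isometry on $H^N(\R)$, one has $\|U(t)\|_{H^N}=\|V(t)\|_{H^N}$, and the latter can be controlled by the $\ell^2_k$-sum of $2^{Nk}\|P_kV(t)\|_{L^2}$ for $k\ge-1$. Applying $P_k$ to \eqref{profile:eqn8} splits $P_kV(t)$ into five contributions: the initial data $P_kV(0)$, the boundary piece $\cB_k$ from \eqref{bdry:def}, the bad cubic $\int_0^t\cC_k(s)\,ds$ from \eqref{cubic:def}, the quartic $\int_0^t\cQ_k(s)\,ds$ from \eqref{quartic:def}, and the quartic remainder $\int_0^tP_ke^{-is\Lambda}\cN_4^I(U)\,ds$. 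Each of these will be estimated in $L^2$ and then summed with the $H^N$-weight.

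For the boundary piece $\cB_k$ evaluated at either $s=0$ or $s=t$, the bounds \eqref{2phase:bdd1}--\eqref{3phase:bdd} on $\Phi^{-1}_{\mu_1\mu_2}$ and $\Phi^{-1}_{\mu_1\mu_2\mu_3}$ show that the corresponding symbols lose at most one derivative on the smallest frequency, which is absorbed by the $H^N$ regularity. Combined with Lemma \ref{lem:Holder} and the Littlewood--Paley partition \eqref{proj:decom}, this produces $\|\cB_k\|_{H^N}\lesssim\|U\|_{H^N}^2+\|U\|_{H^N}^3$. At $s=0$ this yields the $\|U(0)\|_{H^N}^2+\|U(0)\|_{H^N}^3$ terms on the right-hand side of \eqref{energy}; at $s=t$, the smallness hypothesis on $\|U(t)\|_{H^N}$ is used to absorb the corresponding $\|U(t)\|_{H^N}^2$ and $\|U(t)\|_{H^N}^3$ contributions into the left-hand side.

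For the time-integrated pieces, the bad cubic $\cC_k(s)$ is the core of the estimate. Each summand $P_kT_{m_{++-}}(P_{k_1}U,P_{k_2}U,P_{k_3}U_-)$ with $(k_1,k_2,k_3)\in\cY_k$ and $\max\{k_1,k_2\}\ge k_3-O(1)$ is estimated in $L^2$ by Lemma \ref{lem:Holder}: place $L^2$ on whichever factor carries the top frequency---which, thanks to \eqref{proj:decom} and the restriction $\max\{k_1,k_2\}\ge k_3-O(1)$, is always comparable to $2^k$---and $L^\infty$ on the remaining two factors. The multiplier $m_{++-}$ contributes at most a polynomial factor $2^{O(k)}$ coming from $\Phi^{-1}_{\mu_1\mu_2}$ and the elementary symbols in \eqref{symbol:a}--\eqref{symbol:b}. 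After weighting by $2^{Nk}$ and summing, the top-frequency $L^2$-factor reproduces $\|U\|_{H^N}$, one low-frequency $L^\infty$-factor weighted by $2^{k_j}$ reproduces $\|U\|_{W^{1,\infty}}$, and the remaining $L^\infty$-factor weighted by $2^{k(7+1/4)}$ gives the Besov-type sum appearing in \eqref{energy}. The quartic terms $\cQ_k$ and $P_k\cN_4^I(U)$ are handled by the same multilinear scheme; they carry an additional $\|U\|_{L^\infty}\lesssim\|U\|_{H^N}$ factor, so by smallness they are dominated by the cubic bound.

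The main obstacle is the third step: producing the precise exponent $7+\tfrac14$ requires carefully tracking all derivative losses from the normal-form symbols together with the Bernstein factors on the dyadic blocks, and verifying that the regime $\max\{k_1,k_2\}\ge k_3-O(1)$ forces the top-frequency factor to sit at scale $2^k$ so that the $2^{Nk}$ weight is absorbed cleanly. The hypothesis $N\ge27$ provides enough regularity for these multilinear estimates. Combining the four steps and absorbing the boundary $\|U(t)\|_{H^N}^2$ and $\|U(t)\|_{H^N}^3$ terms via smallness yields \eqref{energy}.
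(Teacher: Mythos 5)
Your overall plan is sound and would yield the desired bound, but it takes a different—and noticeably heavier—starting point than the paper. The paper's proof works directly from the reformulation \eqref{profile:eqn6} obtained after only the \emph{first} normal form transformation: the five contributions it must estimate are the initial data, the bilinear boundary piece $T_{\Phi^{-1}_{\mu_1\mu_2}a_{\mu_1\mu_2}}(U_{\mu_1},U_{\mu_2})|_{s=0}^t$, the cubic $T_{m_{\mu_1\mu_2\mu_3}}(\cdots)$ for \emph{all four} signs $(\mu_1,\mu_2,\mu_3)\in A_\Phi$, and the quartic $\cN^I_4$. The cubic is bounded through \eqref{trilin} with the factor $2^{7\mathrm{med}\{k_1,k_2,k_3\}}$, which after the $\ell^2_k$-summation with weight $2^{Nk}$ is what produces $2^{k(7+1/4)}$ in \eqref{energy}. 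You instead propose working from \eqref{profile:eqn8}, which incorporates the \emph{partial second} normal form. That introduces additional pieces—the trilinear boundary term $\cB_k^{II}$ containing $T_{\Phi^{-1}_{\mu_1\mu_2\mu_3}m_{\mu_1\mu_2\mu_3}}(\cdots)$ (requiring \eqref{trilin:good} and an $8$-derivative loss rather than $7$), and the structured quartic $\cQ_k$—each of which you must then estimate separately. Those pieces are all benign given $N\ge 27$, so your route does close, but it duplicates work whose real purpose is the $Z_\alpha$-norm analysis in Section~5, where the bad phase $\Phi_{++-}$ needs surgical treatment; for the raw $H^N$ energy estimate the simpler decomposition suffices and is what the paper uses.

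Two small points of imprecision. First, you assert that within $\cY_k$ under the restriction $\max\{k_1,k_2\}\ge k_3-O(1)$ the top frequency is ``always comparable to $2^k$''; this is false on the set $\cY_k^2$, where $\max\{k_1,k_2,k_3\}\ge k+4$ can be arbitrarily larger than $k$. The estimate still closes because $2^{Nk}\|P_{k_{\max}}U\|_{L^2}\ls 2^{N(k-k_{\max})}\|U\|_{H^N}$ decays geometrically in $k_{\max}-k$, but you should argue this via the $\cX^1_k/\cX^2_k$ (respectively $\cY^1_k/\cY^2_k$) split as in \eqref{energy3}, not by claiming the top frequency sits at scale $2^k$. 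Second, in the paper's proof the boundary contribution from \eqref{profile:eqn6} is only bilinear, so only $\|U(0)\|^2_{H^N}$ is forced; the $\|U(0)\|^3_{H^N}$ term in \eqref{energy} is slack there (though it would genuinely appear in your decomposition via $\cB_k^{II}$). Neither issue undermines the proof; both are worth fixing for precision.
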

\begin{proof}

By \eqref{proj:decom}, \eqref{profile:eqn6} and the unitarity  of $e^{-is\Lambda}$, we have
\begin{equation}\label{energy1}
\begin{split}
\|P_k(V(t)-V(0))\|_{L^2}&\ls\sum_{(k_1,k_2)\in\cX_k}(J_{kk_1k_2}(0)+J_{kk_1k_2}(t))\\
&\quad+\int_0^t(\|P_k\cN^I_4(U)\|_{L^2}
+\sum_{(k_1,k_2,k_3)\in\cY_k}J_{kk_1k_2k_3}(s))ds,
\end{split}
\end{equation}
where
\begin{equation}\label{energy2}
\begin{split}
J_{kk_1k_2}(t)&:=\sum_{\mu_1,\mu_2=\pm}\|P_kT_{\Phi_{\mu_1\mu_2}^{-1}a_{\mu_1\mu_2}}
(P_{k_1}U_{\mu_1},P_{k_2}U_{\mu_2}))(t)\|_{L^2},\\
J_{kk_1k_2k_3}(s)&:=\sum_{(\mu_1,\mu_2,\mu_3)\in A_\Phi}
\|P_kT_{m_{\mu_1\mu_2\mu_3}}(P_{k_1}U_{\mu_1},P_{k_2}U_{\mu_2},P_{k_3}U_{\mu_3})(s)\|_{L^2}.
\end{split}
\end{equation}

\noindent\textbf{(A) Estimate of $J_{kk_1k_2}(t)$}
\vskip 0.2 true cm

It only suffices to deal with the case of $k_1\le k_2$ in $\cX_k$ for $J_{kk_1k_2}(t)$
since the treatment on the case of $k_1\ge k_2$ is completely similar.
Applying \eqref{bilinear:a} and the Bernstein inequality yields
\begin{equation*}
\begin{split}
J_{kk_1k_2}(t)
&\ls\sum_{\mu_1,\mu_2=\pm}\|T_{\Phi_{\mu_1\mu_2}^{-1}a_{\mu_1\mu_2}}
(P_{k_1}U_{\mu_1},P_{k_2}U_{\mu_2}))(t)\|_{L^2}\\
&\ls2^{5k_1}\|P_{k_1}U(t)\|_{L^\infty}\|P_{k_2}U(t)\|_{L^2}\\
&\ls2^{k_1(5+\frac12)}\|P_{k_1}U(t)\|_{L^2}\|P_{k_2}U(t)\|_{L^2}.
\end{split}
\end{equation*}
Then
\begin{equation}\label{energy3}
\begin{split}
&\quad\;\Big\|2^{kN}\sum_{(k_1,k_2)\in\cX_k}J_{kk_1k_2}(t)\Big\|_{\ell^2_k}\\
&\ls\Big\|\sum_{(k_1,k_2)\in\cX_k^1}2^{k_2N}J_{kk_1k_2}(t)\Big\|_{\ell^2_k}
+\Big\|\sum_{(k_1,k_2)\in\cX_k^2}2^{k_2(N-1/8)-k/8+k_1/4}J_{kk_1k_2}(t)\Big\|_{\ell^2_k}\\
&\ls\sum_{k_1\ge-1}2^{k_1(5+\frac12)}\|P_{k_1}U(t)\|_{L^2}\Big\|2^{k_2N}\|P_{k_2}U(t)\|_{L^2}\Big\|_{\ell^2_{k_2}}\\
&\quad+\sum_{k_1\ge-1}2^{k_1(5+\frac34)}\|P_{k_1}U(t)\|_{L^2}\|U(t)\|_{H^N}\\
&\ls\|U(t)\|^2_{H^N},
\end{split}
\end{equation}
where $\ds\|A_k\|_{\ell^p_k}=(\sum_{k\ge-1}A_k^p)^{1/p}$ with $p\ge1$.

\noindent\textbf{(B) Estimate of $J_{kk_1k_2k_3}(s)$}
\vskip 0.2 true cm

Without loss of generality,  $k_1\le k_2\le k_3$ is assumed in $J_{kk_1k_2k_3}(s)$.
It follows from \eqref{trilin} that
\begin{equation*}
J_{kk_1k_2k_3}(s)\ls2^{7k_2}\|P_{k_1}U(s)\|_{L^\infty}
\|P_{k_2}U(s)\|_{L^\infty}\|P_{k_3}U(s)\|_{L^2}.
\end{equation*}
Similarly to \eqref{energy3}, one can achieve
\begin{equation}\label{energy4}
\Big\|2^{kN}\sum_{(k_1,k_2,k_3)\in\cY_k}
J_{kk_1k_2k_3}(s)\Big\|_{\ell^2_k}
\ls\sum_{k_2\ge-1}2^{k_2(7+1/4)}\|P_{k_2}U(s)\|_{L^\infty}
\|U(s)\|_{W^{1,\infty}}\|U(s)\|_{H^N}.
\end{equation}
\noindent\textbf{(C) Estimate of $P_k\cN^I_4(U)$}
\vskip 0.2 true cm
Note that
\begin{equation}\label{energy5}
\Big\|2^{kN}\|P_k\cN^I_4(U)\|_{L^2}\Big\|_{\ell^2_k}
\ls\sum_{k_2\ge-1}2^{k_2(7+1/4)}\|P_{k_2}U(s)\|_{L^\infty}\|U(s)\|_{W^{1,\infty}}\|U(s)\|_{H^N}.
\end{equation}

It follows from \eqref{energy1}-\eqref{energy5} that
\begin{equation*}
\begin{split}
\|V(t)-V(0)\|_{H^N}&\ls\Big\|2^{kN}\|P_k(V(t)-V(0))\|_{L^2}\Big\|_{\ell^2_k}
\ls\|U(0)\|^2_{H^N(\R^d)}+\|U(t)\|^2_{H^N(\R^d)}\\
&\quad+\int_0^t\sum_{k_2\ge-1}2^{k_2(7+1/4)}\|P_{k_2}U(s)\|_{L^\infty}
\|U(s)\|_{W^{1,\infty}}\|U(s)\|_{H^N(\R^d)}ds.
\end{split}
\end{equation*}
On the other hand, the unitarity of $e^{it\Lambda}$ ensures
\begin{equation*}
\|U(t)\|_{H^N}\ls\|V(t)\|_{H^N}\ls\|V(0)\|_{H^N}+\|V(t)-V(0)\|_{H^N}.
\end{equation*}
Therefore, \eqref{energy} is proved.
\end{proof}

\subsection{Continuity of $Z_\alpha$-norm}

In order to take a continuation argument later, the following continuous property of $Z_\alpha$-norm is required.
\begin{proposition}[Continuity and boundedness of $Z_\alpha$-norm]
\label{Prop:Cont}
Assume that $u\in C([0,T_0],H^{N+1}(\R))\cap C^1([0,T_0],H^{N}(\R))$ is a solution of problem \eqref{KG}.
Define $U$ as in \eqref{profile:def} with the property $U_0=U(0)\in Z_{\alpha}$.
Then it holds that
\begin{equation}\label{Cont}
\sup_{t\in[0,T_0]}\|e^{-it\Lambda}U(t)\|_{Z_{\alpha}}\le C\Big(T_0,\|U_0\|_{Z_\alpha},\sup_{t\in[0,T_0]}\|U(t)\|_{H^N(\R)}\Big).
\end{equation}
Moreover, the mapping $t\mapsto e^{-it\Lambda}U(t)$ is continuous from $[0,T_0]$ to $Z_\alpha$.
\end{proposition}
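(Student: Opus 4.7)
The strategy is to integrate the profile equation $\p_tV=e^{-it\Lambda}\cN(U)$ from \eqref{profile:eqn1} in time and run a Gronwall argument on the $Z_\alpha$-norm directly. Since $Z_\alpha$ is a Banach space (a weighted $\ell^1$-sum of $L^2$-norms of the Littlewood--Paley pieces), Duhamel and the triangle inequality give
\begin{equation*}
\|V(t)\|_{Z_\alpha}\le\|V(0)\|_{Z_\alpha}+\int_0^t\|e^{-is\Lambda}\cN(U(s))\|_{Z_\alpha}\,ds,
\end{equation*}
so the whole issue reduces to a pointwise-in-$s$ bound on the $Z_\alpha$-norm of the transported nonlinearity. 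Because the proposition only requires a qualitative, $T_0$-dependent bound, it is enough to establish
\begin{equation*}
\|e^{-is\Lambda}\cN(U(s))\|_{Z_\alpha}\ls (1+s)^{C}\bigl(1+\|V(s)\|_{Z_\alpha}\bigr)\,\cP\bigl(\|U(s)\|_{H^N}\bigr)
\end{equation*}
for some universal $C>0$ and some polynomial $\cP$; Gronwall's lemma then delivers \eqref{Cont}.

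\textbf{Reduction to a weighted $L^2$ estimate.} Two successive applications of Cauchy--Schwarz to the dyadic sums in $\|\cdot\|_{Z_\alpha}$ (pairing $2^{j\alpha}$ and $2^{N_1k}$ against summable geometric sequences $\{2^{-j\ve}\},\{2^{-k\ve}\}$) give the one-sided comparison
\begin{equation*}
\|f\|_{Z_\alpha}\ls\|\w{x}^{\alpha+\ve}\Lambda^{N_1+\ve}f\|_{L^2}.
\end{equation*}
On the Fourier side one has the commutator identity $[x,e^{-is\Lambda}]=s\,e^{-is\Lambda}\Lambda'(D_x)$, so $\|\w{x}\,e^{-is\Lambda}g\|_{L^2}\ls\|\w{x}g\|_{L^2}+s\|g\|_{L^2}$, and H\"older interpolation in the definition of $\|\w{x}^\beta\cdot\|_{L^2}$ combined with $(a+b)^\beta\le a^\beta+b^\beta$ produces the fractional commutator bound
\begin{equation*}
\|\w{x}^{\beta}e^{-is\Lambda}g\|_{L^2}\ls \|g\|_{L^2}^{1-\beta}\|\w{x}g\|_{L^2}^{\beta}+s^{\beta}\|g\|_{L^2},\qquad \beta\in[0,1].
\end{equation*}
Used with $g=\cN(U(s))$ and $\beta=\alpha+\ve\le 1$, together with $[\Lambda^{N_1+\ve},e^{-is\Lambda}]=0$, this reduces the pointwise control to estimates on $\|\w{x}\,\Lambda^{N_1+\ve}\cN(U(s))\|_{L^2}$ and $\|\Lambda^{N_1+\ve}\cN(U(s))\|_{L^2}$.

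\textbf{Multilinear weighted bound and Gronwall.} For a representative bilinear piece $T_a(U,U)$ of $\cN(U)$ with multiplier $a$ of the form \eqref{symbol:a}, the weight $\w{x}$ is distributed onto a single factor via the pointwise inequality $\w{x}\ls\w{x-x_1}+\w{x_1}$, and then Lemma \ref{lem:Holder} together with the integrability of the pseudoproduct kernel and its first moment (consequences of the rational form of $a$) yields
\begin{equation*}
\|\w{x}\,\Lambda^{N_1+\ve}T_a(U,U)\|_{L^2}\ls \|\w{x}U(s)\|_{H^{N_1+2}}\,\|U(s)\|_{H^N}.
\end{equation*}
The weighted Sobolev norm $\|\w{x}U(s)\|_{H^{N_1+2}}$ is then transferred back to the profile via $U=e^{is\Lambda}V$ and the same fractional commutator inequality, finally being controlled by $\|V(s)\|_{Z_\alpha}$ plus polynomially-in-$s$ terms bounded by $\|U(s)\|_{H^N}$ (the choice $N\ge 27$ amply accommodates the derivative counts). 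The cubic and quartic pieces of $\cN(U)$ in \eqref{nonlinear} are handled in exactly the same way. Assembling the pieces closes the required pointwise estimate on the integrand, and Gronwall's inequality completes \eqref{Cont}.

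\textbf{Continuity and main obstacle.} Continuity of $t\mapsto V(t)$ in $Z_\alpha$ follows from the identity $V(t_2)-V(t_1)=\int_{t_1}^{t_2}e^{-is\Lambda}\cN(U(s))\,ds$ and the uniform integrand bound just established, giving $\|V(t_2)-V(t_1)\|_{Z_\alpha}\ls |t_2-t_1|\cdot C(T_0,\|U_0\|_{Z_\alpha},\sup_{[0,T_0]}\|U\|_{H^N})$. The main obstacle is the non-commutativity of the spatial projection $Q_j$ (equivalently, of the weight $\w{x}^{\alpha+\ve}$) with the half-Klein--Gordon flow $e^{-is\Lambda}$: each time the weight is moved across the evolution one pays a factor $s^{\alpha+\ve}$, so the resulting bound blows up polynomially in time. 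This qualitative time-loss is harmless here because the proposition only serves as the continuity input for a bootstrap argument in Section~6; the sharp (non-blow-up) estimate is precisely what Section~5 achieves through the delicate stationary-phase analysis of the bad phase $\Psi_{bad}$ sketched in the introduction.
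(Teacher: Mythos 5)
Your overall architecture (Duhamel for the profile, a pointwise-in-$s$ bound on the transported nonlinearity that is allowed to grow polynomially in $s$, then Gronwall) is the same in spirit as the paper's, but two steps in your execution have genuine gaps.

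First, you run Gronwall on $\|V(t)\|_{Z_\alpha}$ without knowing a priori that this quantity is finite for $t>0$. The hypotheses only give $V(0)\in Z_\alpha$ and $U\in C([0,T_0],H^N)$, and the $Z_\alpha$-norm is \emph{not} controlled by the $H^N$-norm (it carries spatial weights), so finiteness of $\|V(t)\|_{Z_\alpha}$ is part of the conclusion, not an assumption. An integral inequality $\phi(t)\le A+\int_0^tB(s)\phi(s)\,ds$ yields nothing if $\phi$ may be $+\infty$. This is exactly why the paper introduces the truncated norms $\|f\|_{Z^J_\alpha}$ with $\min\{j\alpha,J\}$ in place of $j\alpha$: these are finite (bounded by $C_J\|f\|_{H^N}$), the Lipschitz estimate \eqref{Cont2} is proved uniformly in $J$, and one then lets $J\to\infty$. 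Your argument needs this (or an equivalent) regularization to be valid.

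Second, your reduction to pure weighted $L^2$-norms does not close. You majorize $\|f\|_{Z_\alpha}\ls\|\w{x}^{\alpha+\ve}\Lambda^{N_1+\ve}f\|_{L^2}$, but the bootstrap quantity $\|V(s)\|_{Z_\alpha}$ only returns control of weight $\w{x}^{\alpha}$ (and your multilinear step even invokes $\|\w{x}U(s)\|_{H^{N_1+2}}$, i.e.\ the full weight $\w{x}^{1}$, which for $\alpha\le1/2$ is strictly stronger than anything $\|V(s)\|_{Z_\alpha}$ provides). So the right-hand side of your claimed pointwise bound cannot in fact be dominated by $1+\|V(s)\|_{Z_\alpha}$; there is an irrecoverable $\ve$-loss (or a $1/2$-loss) in the weight exponent at every iteration. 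The $Z_\alpha$-norm is an $\ell^1$-dyadic object sitting strictly between $\w{x}^{\alpha}L^2$ and $\w{x}^{\alpha+\ve}L^2$, and replacing it by either endpoint destroys the self-reproducing structure. The paper avoids this by never leaving the dyadic decomposition: it writes $Q_jP_ke^{-is\Lambda}T_a(\cdot,\cdot)$ through its Schwartz kernel $K_0$, and for $j\ge C(T_0)$ splits into the non-stationary regime $\max\{|j-j_1|,|j-j_2|\}\ge O(1)$ (integration by parts gives $|K_0|\ls2^{-\max\{j,j_1,j_2\}}\w{\cdot}^{-3}$, which beats $2^{j\alpha}$ since $\alpha\le1/2$) and the near-diagonal regime $\max\{|j-j_1|,|j-j_2|\}\le O(1)$, where $2^{j\alpha}\approx2^{j_1\alpha}$ is absorbed directly into $\|V\|_{Z^J_\alpha}$ with no loss of weight. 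Your commutator identity $[x,e^{-is\Lambda}]=s\Lambda'(D)e^{-is\Lambda}$ and the resulting $s^{\beta}$-loss are fine as far as they go, but they do not substitute for this dyadic bookkeeping.
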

\begin{proof}
Let $C>0$ denote the sufficiently large generic constant that depends only on $T_0$, $\|U_0\|_{Z_\alpha}$ and  $\ds\sup_{t\in[0,T_0]}\|U(t)\|_{H^N(\R)}$.

For integer $J\ge0$ and $f\in H^N(\R)$, define
\begin{equation}\label{Cont1}
\|f\|_{Z^J_\alpha}:=\sum_{j,k\ge-1}2^{\min\{j\alpha,J\}+N_1k}
\|Q_jP_kf\|_{L^2(\R)},\qquad\alpha\in(0,1/2].
\end{equation}
This obviously means that there is a constant $C_J>0$ which depends on $J$ such that
\begin{equation*}
\|f\|_{Z^J_\alpha}\le\|f\|_{Z_\alpha},\qquad\|f\|_{Z^J_\alpha}\le C_J\|f\|_{H^N(\R)}.
\end{equation*}
As in (3.20) of \cite{IP13}, we shall show that when $t,t'\in[0,T_0]$ with $0\le t'-t\le1$, for any $J\ge0$,
one has
\begin{equation}\label{Cont2}
\|e^{-it'\Lambda}U(t')-e^{-it\Lambda}U(t)\|_{Z^J_\alpha}
\le C|t'-t|\Big(1+\sup_{s\in [t,t']}\|e^{-is\Lambda}U(s)\|_{Z^J_\alpha}\Big).
\end{equation}
Note that under \eqref{Cont2}, for any $t,t'\in [0,T_0]$,
\begin{equation}\label{Cont3}
\sup_{t\in[0,T_0]}\|e^{-it\Lambda}U(t)\|_{Z^J_\alpha}\le C,\qquad \|e^{-it'\Lambda}U(t')
-e^{-it\Lambda}U(t)\|_{Z^J_\alpha}\le C|t'-t|
\end{equation}
hold uniformly in $J$.
Subsequently, letting $J\rightarrow\infty$ in \eqref{Cont1} and \eqref{Cont3} yields the results in \eqref{Cont}.

Integrating \eqref{profile:eqn1} and \eqref{nonlinear} over $[t,t']$ yields
\begin{equation}\label{Cont4}
\begin{split}
V(t')-V(t)&=\int_t^{t'}e^{-is\Lambda}\cN_4(U)ds
+\sum_{\mu_1,\mu_2=\pm}\int_t^{t'}e^{-is\Lambda}
T_{a_{\mu_1\mu_2}}(U_{\mu_1},U_{\mu_2})(s)ds\\
&\quad+\sum_{\mu_1,\mu_2,\mu_3=\pm}\int_t^{t'}e^{-is\Lambda}
T_{b_{\mu_1\mu_2\mu_3}}(U_{\mu_1},U_{\mu_2},U_{\mu_3})(s)ds.
\end{split}
\end{equation}
Since \eqref{Cont2} is equivalent to
\begin{align}\label{Cont5}
\|V(t')-V(t)\|_{Z^J_\alpha}\le C|t'-t|\Big(1+\sup_{s\in[t,t']}\|V(s)\|_{Z^J_\alpha}\Big),
\end{align}
then \eqref{Cont4}, \eqref{Cont5} as well as \eqref{Cont2} will be obtained if there
hold for $s\in[t,t']$ and $\mu_1,\mu_2,\mu_3=\pm$:
\addtocounter{equation}{1}
\begin{align}
\|e^{-is\Lambda}T_{a_{\mu_1\mu_2}}(U_{\mu_1},U_{\mu_2})\|_{Z^J_\alpha}
&\le C\Big(1+\sup_{s\in[t,t']}\|V(s)\|_{Z^J_\alpha}\Big),
\tag{\theequation a}\label{Cont6a}\\
\|e^{-is\Lambda}T_{b_{\mu_1\mu_2\mu_3}}(U_{\mu_1},U_{\mu_2},U_{\mu_3})\|_{Z^J_\alpha}
&\le C\Big(1+\sup_{s\in[t,t']}\|V(s)\|_{Z^J_\alpha}\Big),
\tag{\theequation b}\label{Cont6b}\\
\|e^{-is\Lambda}\cN_4(U)\|_{Z^J_\alpha}
&\le C\Big(1+\sup_{s\in[t,t']}\|V(s)\|_{Z^J_\alpha}\Big).
\tag{\theequation c}\label{Cont6c}
\end{align}
Next, we prove \eqref{Cont6a}.
Let $C(T_0)>0$ be a large constant to be determined later.

\vskip 0.2cm

\noindent\textbf{Case 1.} $j\le C(T_0)$

\vskip 0.1cm

We now establish
\begin{equation}\label{Cont7}
\sum_{-1\le j\le C(T_0),k\ge-1}2^{\min\{j\alpha,J\}+N_1k}
\|Q_jP_ke^{-is\Lambda}T_{a_{\mu_1\mu_2}}(U_{\mu_1},U_{\mu_2})\|_{L^2(\R)}\le C.
\end{equation}
By \eqref{proj:decom}, one has
\begin{equation*}
P_kT_{a_{\mu_1\mu_2}}(U_{\mu_1},U_{\mu_2})
=\sum_{(k_1,k_2)\in\cX_k}P_kT_{a_{\mu_1\mu_2}}(P_{k_1}U_{\mu_1},P_{k_2}U_{\mu_2}).
\end{equation*}
Without loss of generality, $k_1\ge k_2$ is assumed.
In addition, $2^k\ls2^{k_1}$ holds true.
Then it follows from \eqref{bilinear:b} and the Bernstein inequality that
\begin{equation*}
\begin{split}
&\sum_{-1\le j\le C(T_0),k\ge-1}2^{\min\{j\alpha,J\}+N_1k}
\|Q_jP_ke^{-is\Lambda}T_{a_{\mu_1\mu_2}}(U_{\mu_1},U_{\mu_2})\|_{L^2(\R)}\\
&\le C\sum_{k_1,k_2\ge-1}2^{j\alpha+N_1k_1}\|T_{a_{\mu_1\mu_2}}
(P_{k_1}U_{\mu_1},P_{k_2}U_{\mu_2})\|_{L^2}\\
&\le C\sum_{k_1,k_2\ge-1}2^{N_1k_1}\|P_{k_1}U_{\mu_1}\|_{L^2}
\|P_{k_2}U_{\mu_2}\|_{L^\infty}\\
&\le C\sum_{k_1,k_2\ge-1}2^{(N_1-N)k_1+k_2/2}\|U_{\mu_1}\|_{H^N}
\|P_{k_2}U_{\mu_2}\|_{L^2}\\
&\le C\sum_{k_1,k_2\ge-1}2^{(N_1-N)(k_1+k_2)}\|U\|^2_{H^N}
\le C,
\end{split}
\end{equation*}
which derives \eqref{Cont7}.

\vskip 0.2cm

\noindent\textbf{Case 2.} $j\ge C(T_0)$

\vskip 0.1cm

In this case, we establish
\begin{equation}\label{Cont8}
\sum_{j\ge C(T_0),k\ge-1}2^{\min\{j\alpha,J\}+N_1k}
\|Q_jP_ke^{-is\Lambda}T_{a_{\mu_1\mu_2}}(U_{\mu_1},U_{\mu_2})\|_{L^2(\R)}\le C.
\end{equation}
By virtue of \eqref{proj:proj}, one has
\begin{equation}\label{Cont9}
\begin{split}
&Q_jP_ke^{-is\Lambda}T_{a_{\mu_1\mu_2}}(U_{\mu_1},U_{\mu_2})
=\sum_{j_1,j_2\ge-1}\sum_{(k_1,k_2)\in\cX_k}J_{kk_1k_2}^{jj_1j_2},\\
&J_{kk_1k_2}^{jj_1j_2}:=Q_jP_ke^{-is\Lambda}T_{a_{\mu_1\mu_2}}
(e^{is\mu_1\Lambda}P_{[[k_1]]}Q_{j_1}P_{k_1}V_{\mu_1},
e^{is\mu_2\Lambda}P_{[[k_2]]}Q_{j_2}P_{k_2}V_{\mu_2}).
\end{split}
\end{equation}
As in Case 1, $k_1\ge k_2$ is assumed.
Note that $J_{kk_1k_2}^{jj_1j_2}$ can be written as
\begin{equation}\label{Cont10}
\begin{split}
&J_{kk_1k_2}^{jj_1j_2}(t,x)
=(2\pi)^{-2}\psi_j(x)\iint_{\R^2}K_0(x-x_1,x-x_2)Q_{j_1}P_{k_1}V_{\mu_1}(s,x_1)
Q_{j_2}P_{k_2}V_{\mu_2}(s,x_2)dx_1dx_2,\\
\end{split}
\end{equation}
where
\begin{equation}\label{Cont10-0}
\begin{split}
&K_0(x-x_1,x-x_2)=\iint_{\R^2}e^{i\Psi_0}
a_{\mu_1\mu_2}(\xi_1,\xi_2)\psi_k(\xi_1+\xi_2)
\psi_{[[k_1]]}(\xi_1)\psi_{[[k_2]]}(\xi_2)
d\xi_1d\xi_2,\\
&\Psi_0=s(-\Lambda(\xi_1+\xi_2)+\mu_1\Lambda(\xi_1)+\mu_2\Lambda(\xi_2))
+\xi_1(x-x_1)+\xi_2(x-x_2).
\end{split}
\end{equation}
If $C(T_0)>0$ is sufficiently large, when $j\ge C(T_0)$ and $s\in[0,T_0]$,
then the possible critical points of the phase $\Psi_0$
in \eqref{Cont10-0} are contained in the scope of $\max\{|j-j_1|,|j-j_2|\}\le O(1)$.
The proof of \eqref{Cont8} will be separated into such two subcases:
$\max\{|j-j_1|,|j-j_2|\}\ge O(1)$ and $\max\{|j-j_1|,|j-j_2|\}\le O(1)$.

\vskip 0.1cm
\noindent\textbf{Subcase 2.1.} $\max\{|j-j_1|,|j-j_2|\}\ge O(1)$

\vskip 0.1cm

Denote the operator $\cL_0$ and its adjoint operator $\cL_0^*$ as
\begin{equation*}
\begin{split}
&\cL_0:=-i(|\p_{\xi_1}\Psi_0|^2+|\p_{\xi_2}\Psi_0|^2)^{-1}
(\p_{\xi_1}\Psi_0\p_{\xi_1}+\p_{\xi_2}\Psi_0\p_{\xi_2}),\\
&\cL_0^*:=i\p_{\xi_1}\Big(\frac{\p_{\xi_1}\Psi_0\cdot}
{|\p_{\xi_1}\Psi_0|^2+|\p_{\xi_2}\Psi_0|^2}\Big)
+i\p_{\xi_2}\Big(\frac{\p_{\xi_2}\Psi_0\cdot}
{|\p_{\xi_1}\Psi_0|^2+|\p_{\xi_2}\Psi_0|^2}\Big).
\end{split}
\end{equation*}
Then $\cL_0e^{i\Psi_0}=e^{i\Psi_0}$.
The fact of $|\Lambda'(y)|\le1$ and the condition of $\max\{|j-j_1|,|j-j_2|\}\ge O(1)$ for $j\ge C(T_0)$ with large $C(T_0)$ lead to
\begin{equation*}
|\p_{\xi_1}\Psi_0|+|\p_{\xi_2}\Psi_0|
\gt|x-x_1|+|x-x_2|\gt2^{\max\{j,j_1,j_2\}}.
\end{equation*}
On the other hand, $|\Lambda^{(l)}(y)|\ls1$ holds for $l\ge1$, which yields
\begin{equation*}
|\p_{\xi_1}^l\Psi_0|+|\p_{\xi_2}^l\Psi_0|\ls s\ls T_0\qquad \text{for $l\ge2$}.
\end{equation*}
By the method of stationary phase, we can achieve
\begin{equation*}
\begin{split}
&\quad\;|K_0(x-x_1,x-x_2)|\\
&=\Big|\iint_{\R^2}\cL_0^4(e^{i\Psi_0})a_{\mu_1\mu_2}(\xi_1,\xi_2)
\psi_k(\xi_1+\xi_2)\psi_{[[k_1]]}(\xi_1)\psi_{[[k_2]]}(\xi_2)d\xi_1d\xi_2\Big|\\
&\ls\iint_{\R^2}|(\cL_0^*)^4[a_{\mu_1\mu_2}(\xi_1,\xi_2)
\psi_k(\xi_1+\xi_2)\psi_{[[k_1]]}(\xi_1)\psi_{[[k_2]]}(\xi_2)]|d\xi_1d\xi_2\\
&\ls2^{k_1+k_2-\max\{j,j_1,j_2\}}(1+|x-x_1|+|x-x_2|)^{-3}.
\end{split}
\end{equation*}
This, together with the H\"{o}lder inequality \eqref{Holder}, the Bernstein inequality and \eqref{Cont10}, implies
\begin{equation*}
\begin{split}
\|J_{kk_1k_2}^{jj_1j_2}\|_{L^2(\R)}
&\ls\|K_0(\cdot,\cdot)\|_{L^1(\R^2)}\|Q_{j_1}P_{k_1}V_{\mu_1}\|_{L^2}
\|Q_{j_2}P_{k_2}V_{\mu_2}\|_{L^\infty}\\
&\ls2^{k_1+k_2-\max\{j,j_1,j_2\}}
\|P_{k_1}V_{\mu_1}\|_{L^2}\|P_{k_2}V_{\mu_2}\|_{L^\infty}\\
&\ls2^{k_1(1-N)+3k_2/2-\max\{j,j_1,j_2\}}\|V_{\mu_1}\|_{H^N}\|P_{k_2}V_{\mu_2}\|_{L^2}\\
&\ls2^{(k_1+k_2)(2-N)-\max\{j,j_1,j_2\}}\|U\|^2_{H^N}.
\end{split}
\end{equation*}
Therefore, one arrives at
\begin{equation}\label{Cont11}
\sum_{j\ge C(T_0),k\ge-1}2^{\min\{j\alpha,J\}+N_1k}
\sum_{\substack{j_1,j_2\ge-1,\\ \max\{|j-j_1|,|j-j_2|\}\ge O(1)}}
\sum_{(k_1,k_2)\in\cX_k}\|J_{kk_1k_2}^{jj_1j_2}\|_{L^2(\R)}\le C.
\end{equation}

\vskip 0.1cm
\noindent\textbf{Subcase 2.2.} $\max\{|j-j_1|,|j-j_2|\}\le O(1)$

\vskip 0.1cm

Applying \eqref{bilinear:b} to $J_{kk_1k_2}^{jj_1j_2}$ in \eqref{Cont9} directly yields
\begin{equation*}
\begin{split}
\|J_{kk_1k_2}^{jj_1j_2}\|_{L^2(\R)}
&\ls\|T_{a_{\mu_1\mu_2}}(e^{is\mu_1\Lambda}P_{[[k_1]]}Q_{j_1}P_{k_1}V_{\mu_1},
e^{is\mu_2\Lambda}P_{[[k_2]]}Q_{j_2}P_{k_2}V_{\mu_2})\|_{L^2(\R)}\\
&\ls\|Q_{j_1}P_{k_1}V_{\mu_1}\|_{L^2}
\|e^{is\mu_2\Lambda}P_{[[k_2]]}Q_{j_2}P_{k_2}V_{\mu_2})\|_{L^\infty}\\
&\ls2^{k_2/2}\|Q_{j_1}P_{k_1}V_{\mu_1}\|_{L^2}\|P_{k_2}V_{\mu_2}\|_{L^2},
\end{split}
\end{equation*}
where we have used \eqref{loc:disp} with $\beta=0$.
Due to $2^k\ls2^{k_1}$ and $\max\{|j-j_1|,|j-j_2|\}\le O(1)$, then
\begin{equation*}
\begin{split}
&\sum_{j\ge C(T_0),k\ge-1}2^{\min\{j\alpha,J\}+N_1k}
\sum_{\substack{j_1,j_2\ge-1,\\ \max\{|j-j_1|,|j-j_2|\}\le O(1)}}
\sum_{(k_1,k_2)\in\cX_k}\|J_{kk_1k_2}^{jj_1j_2}\|_{L^2(\R)}\\
&\ls\sum_{j_1,k_1,k_2\ge-1}2^{\min\{j_1\alpha,J\}+N_1k_1}
\|J_{kk_1k_2}^{jj_1j_2}\|_{L^2(\R)}\\
&\ls\sum_{j_1,k_1,k_2\ge-1}2^{\min\{j_1\alpha,J\}+N_1k_1+k_2/2}
\|Q_{j_1}P_{k_1}V_{\mu_1}\|_{L^2}\|P_{k_2}V_{\mu_2}\|_{L^2}\\
&\ls\|V\|_{Z^J_\alpha}\|U\|_{H^N}.
\end{split}
\end{equation*}
This, together with \eqref{Cont9} and \eqref{Cont11}, yields \eqref{Cont8}.

In addition, \eqref{Cont6a} follows from \eqref{Cont7} and \eqref{Cont8}.
Note that only the small value solution problem \eqref{KG} is studied,
then the cubic and higher order nonlinear terms do not cause any additional difficulties.
Then the proofs of \eqref{Cont6b} and \eqref{Cont6c} are omitted here.

\end{proof}

\section{Estimate of $Z_{\alpha}$-norm}

In this section, suppose that the following bootstrap assumption holds for $\alpha\in(0,1/2]$ and $t\in[0,T_{\alpha,\ve}]$,
\begin{equation}\label{thm1BA}
\|V(t)\|_{H^N(\R)}+\|V(t)\|_{Z_\alpha}\le\ve_1.
\end{equation}
This, together with \eqref{Znorm:def}, implies
\begin{equation}\label{thm1BA1}
\sup_{k\ge-1}2^{kN}\|P_kV(t)\|_{L^2(\R)}
+\sum_{j,k\ge-1}2^{j\alpha+N_1k}\|Q_jP_kV(t)\|_{L^2(\R)}\ls\ve_1.
\end{equation}
Acting $Q_j$ to \eqref{profile:eqn8} yields
\begin{equation}\label{QjPk}
Q_jP_kV(t,x)=Q_jP_kV(0,x)+Q_j\cB_k+\int_0^tQ_j(\cC_k(s)+\cQ_k(s) +P_ke^{-is\Lambda}\cN^I_4(U))ds,
\end{equation}
where $\cB_k$, $\cC_k$, $\cQ_k$ and $\cN^I_4(U)$ are defined by \eqref{bdry:def}, \eqref{cubic:def}, \eqref{quartic:def}
and \eqref{cN4:def}, respectively.

\subsection{Estimate of the cubic nonlinearity $\cC_k(s)$}

\begin{lemma}\label{lem:cubic}
Under the bootstrap assumption \eqref{thm1BA1}, it holds that for $\alpha\in(0,1/2]$ and $t\ge0$,
\begin{equation}\label{YH-1}
\sum_{j,k\ge-1}2^{j\alpha+N_1k}\|Q_j\cC_k(t)\|_{L^2(\R)}
\ls\ve_1^3(1+t)^{-2\alpha}.
\end{equation}
\end{lemma}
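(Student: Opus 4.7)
The plan is to express $Q_j\cC_k(t)$ in physical space as a triple integral against the kernel $K_{bad}$ of \eqref{bad:kernel}, with each profile $V,V,V_-$ further decomposed via the spatial projections $Q_{j_1},Q_{j_2},Q_{j_3}$. By the H\"{o}lder inequality (Lemma \ref{lem:Holder}), it will then suffice to bound the $L^1$ norm of (a frequency-localized piece of) $K_{bad}$ times appropriate $L^p$ norms of the three localized profiles. Two of the three profiles will be placed in $L^\infty$ and controlled by the localized dispersive estimate \eqref{loc:disp} with $\beta=\alpha$, producing the time-decay factor $(1+t)^{-2\alpha}$; the third profile is kept in $L^2$ and controlled by the $H^N$-piece of \eqref{thm1BA1}. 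The weight $2^{j\alpha+N_1k}$ on the left of \eqref{YH-1} will be absorbed into the $2^{j_i\alpha+2k_i\alpha+k_i/2}$ factors supplied by \eqref{loc:disp} together with the $N_1$-weights in the definition \eqref{Znorm:def} of the $Z_\alpha$-norm.

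The constraint $\max\{k_1,k_2\}\ge k_3-O(1)$ defining $\cC_k$ in \eqref{cubic:def} automatically excludes case~(LLH) of \eqref{case:intro}, so only (HLL), (LHL), (HLH) and (Oth) contribute. In the generic case (Oth) all of $k_1,k_2,k_3$ are comparable and the $L^1$-bound on $K_{bad}$ follows directly from the symbol bounds on $m_{++-}$ and from \eqref{2phase:bdd1}. For (HLL) and (LHL), where one of $k_1,k_2$ is much larger than the remaining two frequencies, I would exploit the observation $\Lambda''(\xi-r_1\eta)\approx(1+|\xi|)^{-3}$ in \eqref{grad:bad:phase}: the possible stationary points of $\Psi_{bad}$ in the $\xi$-variable then lie in a narrow set determined by the space-time parameters, and repeated integration by parts in $\xi$ outside that set produces spatial decay of $K_{bad}$ in $x-x_1$ sharp enough to absorb the $2^{j\alpha}$ weight when summed in $j$.

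The main obstacle is case~(HLH), in which both $|\xi_1|$ and $|\xi_3|$ are large while $|\xi_2|$ is small, so both $\p_\xi\Psi_{bad}$ and $\p_\zeta\Psi_{bad}$ in \eqref{grad:bad:phase} can vanish simultaneously. Following the outline preceding \eqref{solu:decay:2d}, I would further decompose $K_{bad}$ into a high-, an intermediate-, and a low-frequency piece according to the distance from the zero sets of $\p_\xi\Psi_{bad}$. On the high- and low-frequency pieces, integration by parts in $\xi$ alone gives sufficient spatial decay of $K_{bad}$ in $x-x_1$; on the intermediate piece, the zero sets of $\p_\xi\Psi_{bad}$ and $\p_\zeta\Psi_{bad}$ must be treated simultaneously, and this is the most delicate step because the joint stationary-phase analysis must deliver both the spatial decay needed to absorb the $2^{j\alpha}$ factor and enough frequency decay to outweigh the $2^{N_1k}$ penalty coming from the $Z_\alpha$-norm. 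Once these $L^1$ kernel bounds are in hand, summing the dispersive-Hölder estimates over $j,k,j_1,j_2,j_3,k_1,k_2,k_3$---the frequency indices being summable via the $N_1$ and $N$ weights, the spatial indices via the $\alpha$-weights of the $Z_\alpha$-norm---will yield the desired bound $\ve_1^3(1+t)^{-2\alpha}$.
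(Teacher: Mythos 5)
Your outline follows the same overall strategy as the paper (physical-space kernel representation, H\"older plus the localized dispersive estimate \eqref{loc:disp} near the critical points of $\Psi_{bad}$, stationary phase away from them, the high/intermediate/low split in the (HLH) regime and the $\Lambda''\approx(1+|\xi|)^{-3}$ observation in the (HLL) regime), but there is a genuine gap in your treatment of the case where all three frequencies are comparable. You assert that in case (Oth) "the $L^1$-bound on $K_{bad}$ follows directly from the symbol bounds," and that this suffices. It does not: the obstruction in this lemma is not primarily the frequency configuration but the spatial regime $j\ge\max\{j_1,j_2,j_3\}+O(1)$ with $2^j\lesssim 1+t$. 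There a direct $L^1$ bound on the kernel carries no decay in $j$, so the weight $2^{j\alpha}$ cannot be summed, and it cannot be traded for any $2^{j_l\alpha}$ either. Moreover $\p_\xi\Psi_{bad}=x-x_1+t\p_\xi\Phi$ with $|t\p_\xi\Phi|\lesssim t|\eta|$ genuinely vanishes when $t|\eta|\approx2^j$ even if $k_1\approx k_2\approx k_3$, so integration by parts in $\xi$ alone fails. The paper's Case 1.2.2 of Lemma \ref{lem:incone} (which covers precisely the comparable-frequency and (HLH) configurations together, under $k_3-O(1)\le\max\{k_1,k_2\}\le k_3$) handles this by the three-way decomposition $\chi^I_{high}+\chi^I_{med}+\chi^I_{low}$ of the internal frequency $\eta=\xi_2+\xi_3$ according to $t|\eta|$ versus $2^{j\pm3k_1}$, with $\p_\zeta$-integration by parts on the intermediate piece; that machinery is needed for (Oth) just as much as for (HLH), and your proposal omits it there.

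Two smaller points. First, you never separate off the region $\max\{j,j_1,j_2,j_3\}\ge\log_2(1+t)+O(1)$ (outside the cone), where the paper obtains $(1+t)^{-2}$ by integrating by parts in all three frequency variables simultaneously; some such splitting is needed before one can claim that the possible critical points are confined to $\max_{l}|j-j_l|\le O(1)$. Second, for $\alpha\in(0,1/2)$ the scheme "two factors in $L^\infty$ via \eqref{loc:disp} with $\beta=\alpha$, one in $L^2$" is not sufficient in the sub-case where the critical points force $\max\{j_2,j_3\}\ge j+3k_1-3\max\{k_2,k_3\}-O(1)$: the paper must there use the interpolated estimate \eqref{disp:Lp} with $p=2/(1-2\alpha)$ on one factor in order to close the $j$- and $k$-bookkeeping, so your claim that the weights are simply "absorbed" needs to be verified against this case.
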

We point out that the key point for  proving \eqref{YH-1} is  to analyze the corresponding Schwartz kernel
of $\cC_k(s)$ according to the space-time locations and the frequencies.
For this purpose, by \eqref{proj:proj} and \eqref{cubic:def}, we rewrite $Q_j\cC_k(t)$ as
\begin{equation}\label{QjPkCubic}
\begin{split}
Q_j\cC_k(t)=\sum_{j_1,j_2,j_3\ge-1}
\sum_{\substack{(k_1,k_2,k_3)\in\cY_k,\\ \max\{k_1,k_2\}\ge k_3-O(1)}}
I_{kk_1k_2k_3}^{jj_1j_2j_3},
\end{split}
\end{equation}
where
\begin{equation}\label{QjPkCubic1}
\begin{split}
I_{kk_1k_2k_3}^{jj_1j_2j_3}&:=Q_jP_ke^{-it\Lambda}T_{m_{++-}}
(e^{it\Lambda}P_{[[k_1]]}\cV_1,e^{it\Lambda}P_{[[k_2]]}\cV_2,
e^{-it\Lambda}P_{[[k_3]]}\cV_3),\\
\cV_1&:=Q_{j_1}P_{k_1}V,\quad\cV_2:=Q_{j_2}P_{k_2}V,
\quad\cV_3:=Q_{j_3}P_{k_3}V_-.
\end{split}
\end{equation}
The proof of Lemma \ref{lem:cubic} will be separated into the following two parts in terms of the space-time locations:
outside of the cone and inside of the cone, respectively.
\begin{lemma}[Outside of cone]\label{lem:outcone}
Under the bootstrap assumption \eqref{thm1BA1}, it holds that for $\alpha\in(0,1/2]$ and $t\ge0$,
\begin{equation}\label{OutCone}
\sum_{\substack{j,j_1,j_2,j_3,k\ge-1,\\(k_1,k_2,k_3)\in\cY_k,\\
\max\{k_1,k_2\}\ge k_3-O(1)}}2^{j\alpha+N_1k}
\|I_{kk_1k_2k_3}^{jj_1j_2j_3}\id_{I_{out}}(t)\|_{L^2(\R)}
\ls\ve_1^3(1+t)^{-2\alpha},
\end{equation}
where $I_{out}:=\{t\ge0: \max\{j,j_1,j_2,j_3\}\ge\log_2(1+t)+O(1)\}$ and
\begin{equation}\label{charact:def}
\id_{I}(t):=\left\{
\begin{aligned}
&1,\qquad t\in I,\\
&0,\qquad t\not\in I.
\end{aligned}
\right.
\end{equation}
\end{lemma}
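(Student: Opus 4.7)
The plan is to realize $I_{kk_1k_2k_3}^{jj_1j_2j_3}$ as a convolution against a Schwartz kernel and exploit the absence of critical points of the phase in the outside-cone regime via integration by parts in frequency. Concretely, I would first rewrite
\begin{equation*}
I(t,x)=(2\pi)^{-3}\psi_j(x)\iiint_{\R^3} K(x-x_1,x-x_2,x-x_3)\,\cV_1(x_1)\cV_2(x_2)\cV_3(x_3)\,dx_1 dx_2 dx_3,
\end{equation*}
with kernel
\begin{equation*}
K(y_1,y_2,y_3)=\iiint_{\R^3}e^{i\Psi}\,m_{++-}(\xi_1,\xi_2,\xi_3)\,\psi_k(\xi_1+\xi_2+\xi_3)\,\psi_{[[k_1]]}(\xi_1)\psi_{[[k_2]]}(\xi_2)\psi_{[[k_3]]}(\xi_3)\,d\xi_1 d\xi_2 d\xi_3
\end{equation*}
and phase $\Psi=t\Phi_{++-}(\xi_1,\xi_2,\xi_3)+\xi_1 y_1+\xi_2 y_2+\xi_3 y_3$. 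Since $|\Lambda'|\le 1$, one has $\partial_{\xi_l}\Psi=y_l+O(t)$ for each $l\in\{1,2,3\}$.

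Second, set $j_*:=\max\{j,j_1,j_2,j_3\}$; the hypothesis $t\in I_{out}$ forces $2^{j_*}\gtrsim 1+t$. A short geometric case analysis on the supports of $\psi_j,\psi_{j_l}$ produces an index $l^*\in\{1,2,3\}$ with $|y_{l^*}|\gtrsim 2^{j_*}$ on the relevant region: when $j_*=j_{l_0}>j$, take $l^*=l_0$ and use $|y_{l_0}|\sim 2^{j_{l_0}}$; when $j_*=j$, any index with $j_l\le j-4$ works; the remaining subcase in which all of $j,j_1,j_2,j_3$ lie within $O(1)$ of each other is handled by a further dyadic subdivision of the physical positions. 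In every case, combining with $2^{j_*}\gg t$ gives $|\partial_{\xi_{l^*}}\Psi|\gtrsim 2^{j_*}$.

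Third, performing $L$ integrations by parts in $\xi_{l^*}$—using the symbol bounds on $m_{++-}$ implied by \eqref{2phase:bdd1}, \eqref{2phase:bdd2} and the explicit formulas \eqref{symbol:m-0}, together with derivative bounds on the frequency cutoffs—produces a kernel estimate of the form $\|K\|_{L^1(\R^3)}\ls_L 2^{-Lj_*+C_0(k_1+k_2+k_3)}$ for any $L$ and an absolute $C_0$. Applying \eqref{Holder} with the pairing $L^2\times L^\infty\times L^\infty$ and the local dispersive estimate \eqref{loc:disp} with $\beta=\alpha$ on the two $L^\infty$ factors gives
\begin{equation*}
\|I\|_{L^2(\R)}\ls_L 2^{-Lj_*}(1+t)^{-2\alpha}\,2^{C_0(k_1+k_2+k_3)+(\frac{1}{2}+2\alpha)(k_2+k_3)+\alpha(j_2+j_3)}\prod_{l=1}^3\|\cV_l\|_{L^2}.
\end{equation*}
Substituting $\|\cV_l\|_{L^2}\ls\ve_1\, 2^{-\alpha j_l-N_1 k_l}$ from \eqref{thm1BA1}, choosing $L$ sufficiently large, and summing over $j,j_1,j_2,j_3\ge-1$ and $(k_1,k_2,k_3)\in\cY_k$—noting that $2^{-Lj_*}$ dominates every $2^{\alpha j}, 2^{\alpha j_l}$ weight in the sum and that $N_1=12$ comfortably absorbs the polynomial $k$-losses—yields the desired $\ve_1^3(1+t)^{-2\alpha}$ bound.

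I expect the main obstacle to be the geometric case analysis that locates $l^*$ when $j,j_1,j_2,j_3$ are all comparable within $O(1)$, where the three shifts $y_l$ may simultaneously be small and an additional partition of the physical positions $x,x_l$ is needed to extract the spatial gap dominating $t$; low-frequency regimes ($k_l=-1$), where cutoff derivatives are merely $O(1)$ rather than $2^{-k_l}$, likewise demand a bit more careful bookkeeping in the integration-by-parts, but they do not alter the overall estimate.
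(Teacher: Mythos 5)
Your far-case analysis (some $|j-j_l|\ge O(1)$) is sound in spirit and close to the paper's Case~1, although you integrate by parts only in the single variable $\xi_{l^*}$, whereas the paper's operator $\cL_1$ uses all three frequency directions simultaneously to get $|K_1|\lesssim 2^{O(\max k)}(1+\sum_l|x-x_l|)^{-7}$, which is what makes $\|K_1\|_{L^1(\R^3)}$ genuinely finite. With IBP in only $\xi_{l^*}$ you obtain decay in the $y_{l^*}$ direction but not in the other two, so the $L^1$ norm over $\R^3$ is not controlled without extra work; this is fixable (switch to the paper's $\cL_1$, or insert the support restrictions explicitly), but worth noting.

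The substantive gap is your treatment of the remaining subcase $\max_{l}|j-j_l|\le O(1)$. You claim that ``a further dyadic subdivision of the physical positions'' extracts a spatial gap dominating $t$. It does not: when $x$ and all three $x_l$ lie in the same dyadic annulus $\{|\cdot|\sim 2^{j_*}\}$, the three shifts $y_l=x-x_l$ range from $0$ to $\sim 2^{j_*}$ and can simultaneously satisfy $|y_l|\lesssim t$. In that region $\p_{\xi_l}\Psi = t\p_{\xi_l}\Phi + y_l$ has no lower bound, the phase genuinely has (near-)critical points, and no partition of positions restores non-degeneracy. Consequently the kernel estimate $\|K\|_{L^1}\lesssim_L 2^{-Lj_*+C_0(k_1+k_2+k_3)}$, on which your entire summation scheme hinges (you explicitly use $2^{-Lj_*}$ to dominate the weights $2^{\alpha j},2^{\alpha j_l}$), is false here. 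The paper handles this subcase (its Case~2, estimates \eqref{OutCone6}--\eqref{OutCone7}) by a completely different mechanism: it does not attempt a stationary-phase kernel bound at all, but instead applies the abstract trilinear pseudoproduct bound \eqref{trilin} to the pairing $L^2\times L^\infty\times L^\infty$, putting the dispersive estimate \eqref{loc:disp} with $\beta=\alpha$ on both $L^\infty$ factors to produce $(1+t)^{-2\alpha}$. The weight $2^{j\alpha}$ then does not need to be dominated by any kernel decay; it is absorbed because $j\le\max\{j_1,j_2,j_3\}+O(1)$ in this regime, so $2^{j\alpha}\lesssim 2^{j_1\alpha}$ and the bootstrap bound $\sum_j 2^{j\alpha}\|Q_jP_kV\|_{L^2}\lesssim\ve_1 2^{-N_1k}$ from \eqref{thm1BA1} closes the sum. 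You need to replace your treatment of this subcase with an argument of that type rather than trying to force stationary phase.
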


\begin{lemma}[Inside of cone]\label{lem:incone}
Under the bootstrap assumption \eqref{thm1BA1}, one has that for $\alpha\in(0,1/2]$ and $t\ge0$,
\begin{equation}\label{InCone}
\sum_{\substack{j,j_1,j_2,j_3,k\ge-1,\\(k_1,k_2,k_3)\in\cY_k,\\
\max\{k_1,k_2\}\ge k_3-O(1)}}2^{j\alpha+N_1k}
\|I_{kk_1k_2k_3}^{jj_1j_2j_3}\id_{I_{in}}(t)\|_{L^2(\R)}
\ls\ve_1^3(1+t)^{-2\alpha},
\end{equation}
where $I_{in}:=\{t\ge0: {\max\{j,j_1,j_2,j_3\}}\le\log_2(1+t)+O(1)\}$.
\end{lemma}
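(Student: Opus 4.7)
The plan is to estimate $I_{kk_1k_2k_3}^{jj_1j_2j_3}(t,x)\id_{I_{in}}(t)$ by writing it as an integral operator whose kernel carries the ``bad'' phase $\Phi_{++-}$, and then analyzing this phase in several frequency regimes. First, unfolding the Fourier-multiplier representation of $T_{m_{++-}}$ in \eqref{QjPkCubic1} and changing variables to $\xi_1=\xi-\eta$, $\xi_2=\eta-\zeta$, $\xi_3=\zeta$, I would write
\begin{equation*}
I_{kk_1k_2k_3}^{jj_1j_2j_3}(t,x)=\psi_j(x)\iiint_{\R^3}K_{bad}(t;x-x_1,x-x_2,x-x_3)\cV_1(x_1)\cV_2(x_2)\cV_3(x_3)\,dx_1dx_2dx_3,
\end{equation*}
where $K_{bad}$ is of the form \eqref{bad:kernel} (with $s=t$), multiplied by the cut-offs $\psi_k(\xi_1+\xi_2+\xi_3)\psi_{[[k_1]]}(\xi_1)\psi_{[[k_2]]}(\xi_2)\psi_{[[k_3]]}(\xi_3)$ and the symbol $m_{++-}$.

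Next, I would split the sum in \eqref{QjPkCubic} over $(k_1,k_2,k_3)\in\cY_k$ with $\max\{k_1,k_2\}\ge k_3-O(1)$ according to the frequency regimes (HLL), (LHL), (HLH) and (Oth) from \eqref{case:intro}, noting that (LLH) is excluded by our hypothesis. In each regime the gradient $\partial_{\xi,\zeta}\Psi_{bad}$ has the explicit form \eqref{grad:bad:phase}, and I would perform integration by parts in $\xi$ or $\zeta$ in those subregions where $|\partial_\xi\Psi_{bad}|+|\partial_\zeta\Psi_{bad}|$ dominates the second-order derivatives of $\Psi_{bad}$. In the remaining subregions, where these derivatives might vanish, one keeps $K_{bad}$ intact and bounds it using the support constraints $|x|\approx 2^j$ and $|x_i|\approx 2^{j_i}$; inside the cone the assumption $\max\{j,j_1,j_2,j_3\}\le\log_2(1+t)+O(1)$ controls these spatial weights by powers of $1+t$.

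The resulting bounds on $K_{bad}$ would then be combined with the H\"older inequality \eqref{Holder} and the bootstrap assumption \eqref{thm1BA1}. Two of the three factors $\cV_i$ would be placed in $L^\infty$ via the localized dispersive estimate \eqref{loc:disp} with $\beta=\alpha$, namely
\begin{equation*}
\|e^{\pm it\Lambda}P_{[[k_i]]}Q_{j_i}P_{k_i}V\|_{L^\infty(\R)}\ls 2^{k_i/2+2k_i\alpha+j_i\alpha}(1+t)^{-\alpha}\|Q_{j_i}P_{k_i}V\|_{L^2(\R)},
\end{equation*}
producing the required $(1+t)^{-2\alpha}$ decay, while the remaining factor is kept in $L^2$. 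The $2^{j_i\alpha}$ losses are absorbed by the weight $2^{j_i\alpha+N_1k_i}$ present in the $Z_\alpha$-norm through \eqref{thm1BA1}, and the sums over $k,k_1,k_2,k_3$ close since $N_1=12$ dominates the polynomial losses coming from the symbol $m_{++-}$ and from the dispersive estimate; see the multilinear bounds in the Appendix.

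The main obstacle will be case (HLH), where $2^{k_1}\sim 2^{k_3}\gg 2^{k_2}$ and both $\partial_\xi\Psi_{bad}$ and $\partial_\zeta\Psi_{bad}$ in \eqref{grad:bad:phase} can simultaneously vanish near $\xi\approx\zeta$. Following the hint in the introduction, I would further decompose $K_{bad}$ according to the size of the intermediate frequency into high, intermediate and low subregions: in the high and low subregions a stationary-phase argument in $\xi$ applies since $\partial_\xi\Psi_{bad}\ne 0$ there, whereas in the intermediate subregion one has to exploit the joint zero sets of $\partial_\xi\Psi_{bad}$ and $\partial_\zeta\Psi_{bad}$ together with $\Lambda''(y)\approx(1+|y|)^{-3}$ in order to extract sufficient spatial decay of $K_{bad}$. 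Once this delicate decomposition is carried out, the same H\"older-plus-dispersive scheme used in the other cases closes the bound \eqref{InCone}.
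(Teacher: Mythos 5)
Your proposal captures the paper's strategy at a high level: write $I_{kk_1k_2k_3}^{jj_1j_2j_3}$ as an integral against a kernel $K_2$ with phase $\Psi_2$ (the paper's version of $K_{bad}$, $\Psi_{bad}$), decompose according to frequency regimes, do stationary phase where $\nabla_{\xi,\zeta}\Psi_2$ is bounded away from zero, and otherwise combine H\"older with the localized dispersive estimate \eqref{loc:disp} and the bootstrap assumption \eqref{thm1BA1}. Your observation that the delicate case requires a further high/intermediate/low split of the mediating frequency $\eta$, with stationary phase in $\xi$ on the ends and exploitation of the joint structure of $\p_\xi\Psi_2$, $\p_\zeta\Psi_2$ in the middle, matches what the paper does with $\chi^I_{high}$, $\chi^I_{med}$, $\chi^I_{low}$ and the operators $\cL_2$, $\tilde\cL_2$ in Case~1.2.2.

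However, there is a genuine gap in how you propose to control the outer weight $2^{j\alpha}$. You say that in the regions where the phase derivatives may vanish you would "keep $K_{bad}$ intact" and control the spatial weights by powers of $1+t$ via the inside-the-cone constraint. That does not close: if $2^{j\alpha}$ is bounded only by $(1+t)^\alpha$, then $(1+t)^\alpha\cdot(1+t)^{-2\alpha}=(1+t)^{-\alpha}$, which is weaker than the required $(1+t)^{-2\alpha}$. The mechanism in the paper is sharper and dichotomous. Whenever the kernel cannot be integrated by parts, one must show that $j$ is in fact comparable to (or dominated by) one of the input localizations $j_i$, possibly up to a loss of $3k_1$ which is then absorbed by the $N_1$-weight: this is exactly the content of Case~1.1 ($j\le\max\{j_1,j_2,j_3\}+O(1)$), Case~1.2.1 ($j\le\max\{j_2,j_3\}+3k_1+2M_1$), and Case~2.2, where one places the factor carrying the largest $j_i$ in $L^2$ so that $2^{j\alpha}\lesssim 2^{j_i\alpha}$ is transferred onto the $Z_\alpha$-weight. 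Conversely, when $j$ is strictly larger than all $j_i$ by a margin exceeding $3k_1+2M_1$, the configuration guarantees that either $\p_\xi\Psi_2$ (if $t|\eta|$ is high or low relative to $2^{j+3k_1}$) or $\p_\zeta\Psi_2$ (in the medium $\eta$ range, since $|x_2-x_3|\ll 2^j$) is large, and the stationary phase gives a kernel bound decaying like a negative power of $2^j$ that makes the $j$-sum converge. Your sketch does not draw this dichotomy between "$j$ dominated by an input $j_i$" and "$j$ dominating, use stationary phase" explicitly; this comparison is the key structural step, not the trivial bound $2^j\le C(1+t)$. A secondary omission is the $\alpha<1/2$ refinement in Case~2.2.1 of the paper, where the $L^2$--$L^\infty$ H\"older pairing is replaced by $L^{2/(1-2\alpha)}$--$L^{1/\alpha}$--$L^\infty$ together with \eqref{disp:Lp} to recover $(1+t)^{-2\alpha}$ with the right regularity losses.
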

\vskip 0.2 true cm
It is obvious that Lemma \ref{lem:cubic} comes from Lemmas \ref{lem:outcone} and \ref{lem:incone} directly.

\begin{proof}[Proof of Lemma \ref{lem:outcone}]

According to the definition \eqref{QjPkCubic1}, we have
\begin{equation}\label{OutCone1}
\begin{split}
&I_{kk_1k_2k_3}^{jj_1j_2j_3}(t,x)
=(2\pi)^{-3}\psi_j(x)\iiint_{\R^3}K_1(x-x_1,x-x_2,x-x_3)\cV_1(t,x_1)\\
&\hspace{5cm}\times\cV_2(t,x_2)\cV_3(t,x_3)dx_1dx_2dx_3,\\
\end{split}
\end{equation}
where
\begin{equation}\label{OutCone1-0}
\begin{split}
&K_1(x-x_1,x-x_2,x-x_3)=\iiint_{\R^3}e^{i\Psi_1}
m_{++-}(\xi_1,\xi_2,\xi_3)\psi_k(\xi_1+\xi_2+\xi_3)\\
&\hspace{5cm}\times\psi_{[[k_1]]}(\xi_1)\psi_{[[k_2]]}(\xi_2)
\psi_{[[k_3]]}(\xi_3)d\xi_1d\xi_2d\xi_3,\\
&\Psi_1=t(-\Lambda(\xi_1+\xi_2+\xi_3)+\Lambda(\xi_1)+\Lambda(\xi_2)-\Lambda(\xi_3))\\
&\hspace{5cm}+\xi_1(x-x_1)+\xi_2(x-x_2)+\xi_3(x-x_3).
\end{split}
\end{equation}
If $x\in\supp\psi_j$, $x_l\in\supp\psi_l$ ($l=1,2,3$) and $\max\{j,j_1,j_2,j_3\}\ge\log_2(1+t)+O(1)$,
then the possible critical points of  phase $\Psi_1$ in \eqref{OutCone1-0} are contained in $\ds\max_{l=1,2,3}|j-j_l|\le O(1)$.
Based on this, the proof of \eqref{OutCone} will be separated into such two
cases: $\ds\max_{l=1,2,3}|j-j_l|\ge O(1)$ and $\ds\max_{l=1,2,3}|j-j_l|\le O(1)$.

\vskip 0.2cm

\noindent\textbf{Case 1.} $\ds\max_{l=1,2,3}|j-j_l|\ge O(1)$

Set
\begin{equation*}
\cL_1:=-i(|\p_{\xi_1}\Psi_1|^2+|\p_{\xi_2}\Psi_1|^2+|\p_{\xi_3}\Psi_1|^2)^{-1}
\sum_{l=1}^3\p_{\xi_l}\Psi_1\p_{\xi_l}.
\end{equation*}
Then $\cL_1e^{i\Psi_1}=e^{i\Psi_1}$.
In addition, the adjoint operator of $\cL_1$ is
\begin{equation*}
\cL_1^*:=i\sum_{l=1}^3\p_{\xi_l}\Big(\frac{\p_{\xi_l}\Psi_1\cdot}
{|\p_{\xi_1}\Psi_1|^2+|\p_{\xi_2}\Psi_1|^2+|\p_{\xi_3}\Psi_1|^2}\Big).
\end{equation*}
The conditions $\max\{j,j_1,j_2,j_3\}\ge\log_2(1+t)+O(1)$ and $\ds\max_{l=1,2,3}|j-j_l|\ge O(1)$ ensure that if $x\in\supp\psi_j$, $x_l\in\supp\psi_l$, $l=1,2,3$, then it holds that
\begin{equation*}
\begin{split}
|x-x_1|+|x-x_2|+|x-x_3|&\ge2^{O(1)}(1+t),\\
|x-x_1|+|x-x_2|+|x-x_3|&\gt2^{\max\{j,j_1,j_2,j_3\}}.
\end{split}
\end{equation*}
This, together with $|\Lambda'(y)|\le1$, yields
\begin{equation}\label{OutCone2}
\begin{split}
(|\p_{\xi_1}\Psi_1|^2+|\p_{\xi_2}\Psi_1|^2+|\p_{\xi_3}\Psi_1|^2)^{1/2}
&\gt|x-x_1|+|x-x_2|+|x-x_3|\\
&\gt\max\{1+t,2^{\max\{j,j_1,j_2,j_3\}}\}.
\end{split}
\end{equation}
On the other hand, for $l\ge2$, one obtains from \eqref{OutCone1-0} that
\begin{equation}\label{OutCone3}
|\p^l_{\xi_1,\xi_2,\xi_3}\Psi_1|\ls t.
\end{equation}
Without loss of generality, $\max\{k_1,k_2,k_3\}=k_1$ is assumed.
By the method of stationary phase and \eqref{OutCone2}, \eqref{OutCone3}, \eqref{symbol:m:bdd},
we arrive at
\begin{equation*}
\begin{split}
&\quad\;|K_1(x-x_1,x-x_2,x-x_3)|\\
&=\Big|\iiint_{\R^3}\cL_1^7(e^{i\Psi_1})m_{++-}(\xi_1,\xi_2,\xi_3)
\psi_k(\xi_1+\xi_2+\xi_3)\psi_{[[k_1]]}(\xi_1)\psi_{[[k_2]]}(\xi_2)
\psi_{[[k_3]]}(\xi_3)d\xi_1d\xi_2d\xi_3\Big|\\
&\ls\iiint_{\R^3}|(\cL_1^*)^7[m_{++-}(\xi_1,\xi_2,\xi_3)
\psi_k(\xi_1+\xi_2+\xi_3)\psi_{[[k_1]]}(\xi_1)\psi_{[[k_2]]}(\xi_2)
\psi_{[[k_3]]}(\xi_3)]|d\xi_1d\xi_2d\xi_3\\
&\ls2^{k_1+k_2+k_3+\max\{k_1,k_2,k_3\}}(1+|x-x_1|+|x-x_2|+|x-x_3|)^{-7}\\
&\ls2^{4\max\{k_1,k_2,k_3\}-\max\{j,j_1,j_2,j_3\}}
(1+t)^{-2}(1+|x-x_1|+|x-x_2|+|x-x_3|)^{-4}.
\end{split}
\end{equation*}
This, together with \eqref{thm1BA1}, \eqref{OutCone1}, the H\"{o}lder inequality \eqref{Holder}
and the Bernstein inequality, leads to
\begin{equation}\label{OutCone4}
\begin{split}
\|I_{kk_1k_2k_3}^{jj_1j_2j_3}\|_{L^2(\R)}
&\ls\|K_1(\cdot,\cdot,\cdot)\|_{L^1(\R^3)}\|\cV_1\|_{L^2}\|\cV_2\|_{L^\infty}\|\cV_3\|_{L^\infty}\\
&\ls2^{4\max\{k_1,k_2,k_3\}-\max\{j,j_1,j_2,j_3\}}(1+t)^{-2}\|\cV_1\|_{L^2}
\|\cV_2\|_{L^\infty}\|\cV_3\|_{L^\infty}\\
&\ls2^{4\max\{k_1,k_2,k_3\}-\max\{j,j_1,j_2,j_3\}}(1+t)^{-2}\|P_{k_1}V\|_{L^2}
\|P_{k_2}V\|_{L^\infty}\|P_{k_3}V_-\|_{L^\infty}\\
&\ls2^{4k_1+(k_2+k_3)/2-\max\{j,j_1,j_2,j_3\}}(1+t)^{-2}
\|P_{k_1}V\|_{L^2}\|P_{k_2}V\|_{L^2}\|P_{k_3}V_-\|_{L^2}\\
&\ls\ve_1^32^{(4-N)(k_1+k_2+k_3)-2j/3-(j_1+j_2+j_3)/9}(1+t)^{-2}.
\end{split}
\end{equation}
Combining \eqref{OutCone4} with $N\ge N_1+5$ implies
\begin{equation}\label{OutCone5}
\sum_{\substack{j,j_1,j_2,j_3,k\ge-1,\\ \max\limits_{l=1,2,3}|j-j_l|\ge O(1)}}
\sum_{\substack{(k_1,k_2,k_3)\in\cY_k,\\ \max\{k_1,k_2\}\ge k_3-O(1)}}
2^{j\alpha+N_1k}\|I_{kk_1k_2k_3}^{jj_1j_2j_3}\id_{I_{out}}(t)\|_{L^2(\R)}
\ls\ve_1^3(1+t)^{-2}.
\end{equation}

\vskip 0.2cm

\noindent\textbf{Case 2.} $\ds\max_{l=1,2,3}|j-j_l|\le O(1)$
\vskip 0.1 true cm

Without loss of generality, $\max\{k_1,k_2,k_3\}=k_1$ and $\med\{k_1,k_2,k_3\}=k_2$ are assumed.
Applying \eqref{trilin} to \eqref{QjPkCubic1} yields
\begin{equation}\label{OutCone6}
\|I_{kk_1k_2k_3}^{jj_1j_2j_3}\|_{L^2}
\ls2^{7k_2}\|\cV_1\|_{L^2}\|e^{it\Lambda}P_{[[k_2]]}\cV_2\|_{L^\infty}
\|e^{-it\Lambda}P_{[[k_3]]}\cV_3\|_{L^\infty}.
\end{equation}
By a similar argument of \eqref{energy3}, one can conclude from \eqref{loc:disp} with $\beta=\alpha$,
the assumption \eqref{thm1BA1}, \eqref{OutCone6} and the condition $N_1\ge9$ that
\begin{equation}\label{OutCone7}
\begin{split}
&\sum_{\substack{j,j_1,j_2,j_3,k\ge-1,\\ \max\limits_{l=1,2,3}|j-j_l|\le O(1)}}
\sum_{\substack{(k_1,k_2,k_3)\in\cY_k,\\ \max\{k_1,k_2\}\ge k_3-O(1)}}
2^{j\alpha+N_1k}\|I_{kk_1k_2k_3}^{jj_1j_2j_3}\id_{I_{out}}(t)\|_{L^2(\R)}\\
&\ls(1+t)^{-2\alpha}\sum_{\substack{j_1,j_2,j_3\ge-1,\\ k_1,k_2,k_3\ge-1}}
2^{(j_1+j_2+j_3)\alpha+N_1k_1+9(k_2+k_3)}\|Q_{j_1}P_{k_1}V\|_{L^2}
\|Q_{j_2}P_{k_2}V\|_{L^2}\|Q_{j_3}P_{k_3}V\|_{L^2}\\
&\ls\ve_1^3(1+t)^{-2\alpha}.
\end{split}
\end{equation}
Collecting \eqref{OutCone5} and \eqref{OutCone7} derives \eqref{OutCone}.
\end{proof}

\vskip 0.5cm

\begin{proof}[Proof of Lemma \ref{lem:incone}]
At first, we deal with the case of $t\ge1$. At this time, \eqref{OutCone1} can be reformulated as
\begin{equation}\label{InCone1}
\begin{split}
&I_{kk_1k_2k_3}^{jj_1j_2j_3}(t,x)=(2\pi)^{-3}\psi_j(x)
\iiint_{\R^3}K_2(x-x_1,x-x_2,x-x_3)\cV_1(t,x_1)\\
&\hspace{6cm}\times\cV_2(t,x_2)\cV_3(t,x_3)dx_1dx_2dx_3,\\
\end{split}
\end{equation}
where
\begin{equation}\label{InCone1-0}
\begin{split}
&K_2(x-x_1,x-x_2,x-x_3)=\iiint_{\R^3}e^{i\Psi_2}m_{++-}\psi_k(\xi)
\psi_{[[k_1]]}(\xi-\eta)\psi_{[[k_2]]}(\eta-\zeta)\\
&\hspace{6cm}\times\psi_{[[k_3]]}(\zeta)d\xi d\eta d\zeta,\\
&\Psi_2=t\Phi+\xi(x-x_1)+\eta(x_1-x_2)+\zeta(x_2-x_3),\\
&\Phi=\Phi(\xi,\eta,\zeta)=-\Lambda(\xi)+\Lambda(\xi-\eta)+\Lambda(\eta-\zeta)
-\Lambda(\zeta).
\end{split}
\end{equation}
The proof of \eqref{InCone} will be separated into two cases: $k_3-O(1)\le\max\{k_1,k_2\}\le k_3$ and $\max\{k_1,k_2\}\ge k_3$.
Due to the symmetry, it is convenient to assume $\max\{k_1,k_2\}=k_1$.

\vskip 0.2cm

\noindent\textbf{Case 1. $k_3-O(1)\le k_1\le k_3$}

\vskip 0.1cm

To control the factor $2^{j\alpha}$ in \eqref{InCone}, we will treat such two cases of $\ds j\le\max\{j_1,j_2,j_3\}+O(1)$ and $\ds j\ge\max\{j_1,j_2,j_3\}+O(1)$, respectively.
In addition, note that $2^k\ls2^{\max\{k_1,k_2,k_3\}}\ls2^{k_1}$ holds.

\vskip 0.1cm

\noindent\textbf{Case 1.1. $j\le\max\{j_1,j_2,j_3\}+O(1)$}

\vskip 0.1cm

For convenience, $\ds\max\{j_1,j_2,j_3\}=j_2$ is assumed.
By utilizing \eqref{trilin} as \eqref{OutCone6}, one can obtain
\begin{equation}\label{InCone2}
\begin{split}
\|I_{kk_1k_2k_3}^{jj_1j_2j_3}\|_{L^2(\R)}
&\ls\|T_{m_{++-}}(e^{it\Lambda}P_{[[k_1]]}\cV_1,
e^{it\Lambda}P_{[[k_2]]}\cV_2,e^{-it\Lambda}P_{[[k_3]]}\cV_3)\|_{L^2(\R)}\\
&\ls2^{7k_1}\|e^{it\Lambda}P_{[[k_1]]}\cV_1\|_{L^\infty}\|\cV_2\|_{L^2}
\|e^{-it\Lambda}P_{[[k_3]]}\cV_3\|_{L^\infty}.
\end{split}
\end{equation}
Therefore, it follows from \eqref{loc:disp} with $\beta=\alpha$ and $N_1\ge10$ that
\begin{equation}\label{InCone3}
\begin{split}
&\sum_{\substack{j,j_1,j_2,j_3,k\ge-1,\\j\le\max\{j_1,j_2,j_3\}+O(1)}}
\sum_{\substack{(k_1,k_2,k_3)\in\cY_k,\\k_3-O(1)\le k_1\le k_3}}
2^{j\alpha+N_1k}\|I_{kk_1k_2k_3}^{jj_1j_2j_3}\id_{I_{in}}(t)\|_{L^2(\R)}\\
&\ls\sum_{\substack{j_1,j_2,j_3\ge-1,\\k_1,k_2,k_3\ge-1,\\|k_1-k_3|\le O(1)}}
\sum_{j\le j_2+O(1)}2^{k_1(N_1+7)+j\alpha}
\|e^{it\Lambda}P_{[[k_1]]}\cV_1\|_{L^\infty}\|Q_{j_2}P_{k_2}V\|_{L^2}
\|e^{-it\Lambda}P_{[[k_3]]}\cV_3\|_{L^\infty}\\
&\ls(1+t)^{-2\alpha}
\sum_{\substack{j_1,j_2,j_3\ge-1,\\k_1,k_2,k_3\ge-1,\\|k_1-k_3|\le O(1)}}
2^{k_1(N_1+10)+(j_1+j_2+j_3)\alpha}\|Q_{j_1}P_{k_1}V\|_{L^2}\|Q_{j_2}P_{k_2}V\|_{L^2}
\|Q_{j_3}P_{k_3}V\|_{L^2}\\
&\ls\ve_1^3(1+t)^{-2\alpha}.
\end{split}
\end{equation}

\vskip 0.1cm

\noindent\textbf{Case 1.2. $j\ge\max\{j_1,j_2,j_3\}+O(1)$}
\vskip 0.1cm

At first, we discuss the possible critical points of the phase $\Psi_2$ in \eqref{InCone1}.
Note that
\begin{equation}\label{InCone4}
\begin{split}
\p_\xi\Phi&=\Lambda'(\xi-\eta)-\Lambda'(\xi)=-\eta\Lambda''(\xi-r_1\eta),
\quad r_1\in[0,1],\\
\p_\zeta\Phi&=\Lambda'(\zeta-\eta)-\Lambda'(\zeta)=-\eta\Lambda''(\zeta-r_2\eta),
\quad r_2\in[0,1],\\
\Lambda''(x)&=(1+x^2)^{-3/2}.
\end{split}
\end{equation}
By $|\xi|\approx2^k$, $|\xi-\eta|\approx2^{k_1}$, $|\eta-\zeta|\approx2^{k_2}$, $|\zeta|\approx2^{k_3}$ and
\begin{equation*}
|\xi-r_1\eta|=|r_1(\xi-\eta)+(1-r_1)\xi|\ls2^{\max\{k,k_1\}}\ls2^{k_1},
\end{equation*}
one has
\begin{equation}\label{InCone5}
2^{-3k_1}|\eta|\ls|\p_\xi\Phi|,|\p_\zeta\Phi|\ls|\eta|.
\end{equation}
On the other hand, direct computation shows
\begin{equation}\label{InCone6}
\p_\xi\Psi_2=t\p_\xi\Phi+x-x_1,\qquad\p_\zeta\Psi_2=t\p_\zeta\Phi+x_2-x_3.
\end{equation}
It is noticed that the condition $j\ge\max\{j_1,j_2,j_3\}+O(1)$ ensures $|x-x_1|\approx2^j$.
In view of \eqref{InCone5} and \eqref{InCone6}, in order to give a precise analysis on the
related Schwarz kernel $K_2$ in \eqref{InCone1}, one needs to discuss the scope of frequency $\eta$.
Note that when $2^{-3k_1}t|\eta|\gg2^j$, $|\p_\xi\Psi_2|\ge t|\p_\xi\Phi|-|x-x_1|$ has a lower bound;
when $t|\eta|\ll2^j$, $|\p_\xi\Psi_2|\ge|x-x_1|-t|\p_\xi\Phi|$ also has a lower bound.
Based on this, for a fixed and large enough number $M_1>0$, we now introduce
\begin{equation}\label{InCone7}
\begin{split}
\chi^I_{high}(\eta)&=\chi\Big(\frac{t|\eta|}{2^{j+3k_1+M_1}}\Big),
\qquad\qquad\chi^I_{low}(\eta)=1-\chi\Big(\frac{t|\eta|}{2^{j-M_1}}\Big),\\
\chi^I_{med}(\eta)&=(1-\chi^I_{high}(\eta))(1-\chi^I_{low}(\eta)),
\end{split}
\end{equation}
where the cut-off function $\chi$ with $\chi(s)\in C^\infty(\R)$ and $0\le\chi(s)\le1$ is defined as
\begin{equation}\label{cutoff:def}
\chi(s)=\left\{
\begin{aligned}
&0,\qquad s\le1,\\
&1,\qquad s\ge2.
\end{aligned}
\right.
\end{equation}
If $M_1\ge3$, one then easily knows
\begin{equation}\label{InCone8}
\begin{split}
&\supp\chi^I_{high}\subset\{t|\eta|\ge2^{j+3k_1+M_1}\},\qquad
\supp\chi^I_{low}\subset\{t|\eta|\le2^{j-M_1+1}\},\\
&\supp\chi^I_{high}\cap\supp\chi^I_{low}=\emptyset,\\
&\supp\chi^I_{med}\subset\{2^{j-M_1}\le t|\eta|\le2^{j+3k_1+M_1+1}\}.
\end{split}
\end{equation}
The remaining work is to deal with the case of the medium frequency mode $2^j\ls t|\eta|\ls2^{j+3k_1}$,
where the corresponding phase $\Psi_2$ may have critical points.
On $\supp\chi^I_{med}$, $\eta$ will be separated into the sub-high and sub-low modes according to
the property of $\p_\zeta\Psi_2=0$.
Note that $|x_2-x_3|$ has an upper bound $2^{\max\{j_2,j_3\}}$.
For the sub-high frequency mode $t|\eta|\ge2^{\max\{j_2,j_3\}+3k_1+M_1}$,
we see $|\p_\zeta\Psi_2|\ge t|\p_\zeta\Phi|-|x_2-x_3|$, which means that
there is no critical point for $\Psi_2$.
For the sub-low frequency mode $t|\eta|\le2^{\max\{j_2,j_3\}+3k_1+M_1}$, it follows from
the third line of \eqref{InCone8} that $j\le\max\{j_2,j_3\}+3k_1+2M_1$.
Based on this, the scope of $j$ in Case 1.2 will be separated into $j\le\max\{j_2,j_3\}+3k_1+2M_1$
and $j\ge\max\{j_2,j_3\}+3k_1+2M_1$.

\vskip 0.2cm

\noindent\textbf{Case 1.2.1. $j\le\max\{j_2,j_3\}+3k_1+2M_1$}

\vskip 0.1cm

Without loss of generality, $\max\{j_2,j_3\}=j_2$ is assumed.
Similarly to \eqref{InCone3}, one has that for $N_1\ge12$,
\begin{equation}\label{InCone9}
\begin{split}
&\sum_{\substack{j,j_1,j_2,j_3,k\ge-1,\\j\ge\max\{j_1,j_2,j_3\}+O(1),\\j\le\max\{j_2,j_3\}+3k_3+2M_1}}
\sum_{\substack{(k_1,k_2,k_3)\in\cY_k,\\k_3-O(1)\le k_1\le k_3}}
2^{j\alpha+N_1k}\|I_{kk_1k_2k_3}^{jj_1j_2j_3}\id_{I_{in}}(t)\|_{L^2(\R)}\\
&\ls\sum_{\substack{j_1,j_2,j_3\ge-1,\\k_1,k_2,k_3\ge-1,\\|k_1-k_3|\le O(1)}}
\sum_{j\le j_2+3k_1+2M_1}2^{k_1(N_1+7)+j\alpha}
\|e^{it\Lambda}P_{[[k_1]]}\cV_1\|_{L^\infty}\|Q_{j_2}P_{k_2}V\|_{L^2}
\|e^{-it\Lambda}P_{[[k_3]]}\cV_3\|_{L^\infty}\\
&\ls(1+t)^{-2\alpha}
\sum_{\substack{j_1,j_2,j_3\ge-1,\\k_1,k_2,k_3\ge-1,\\|k_1-k_3|\le O(1)}}
2^{k_1(N_1+12)+(j_1+j_2+j_3)\alpha}\|Q_{j_1}P_{k_1}V\|_{L^2}\|Q_{j_2}P_{k_2}V\|_{L^2}
\|Q_{j_3}P_{k_3}V\|_{L^2}\\
&\ls\ve_1^3(1+t)^{-2\alpha}.
\end{split}
\end{equation}

\vskip 0.2cm

\noindent\textbf{Case 1.2.2. $j\ge\max\{j_2,j_3\}+3k_1+2M_1$}

\vskip 0.1cm

In terms of
\begin{equation*}
\chi^I_{high}(\eta)+\chi^I_{low}(\eta)+\chi^I_{med}(\eta)=1,
\end{equation*}
the Schwartz kernel $K_2$ in \eqref{InCone1} can be separated as
\begin{equation}\label{InCone10}
\begin{split}
&K_2=K^I_{high}+K^I_{low}+K^I_{med},\\
&K^I_{\Xi}=\iiint_{\R^3}\chi^I_{\Xi}(\eta)e^{i\Psi_2}m_{++-}\psi_k(\xi)
\psi_{[[k_1]]}(\xi-\eta)\psi_{[[k_2]]}(\eta-\zeta)
\psi_{[[k_3]]}(\zeta)d\xi d\eta d\zeta,
\end{split}
\end{equation}
where $\Xi\in\{high,low,med\}$.

\vskip 0.2cm

\textbf{$(A_1)$ Estimates of $K^I_{high}$ and $K^I_{low}$}

\vskip 0.1cm
Set
\begin{equation}\label{cL2:def}
\cL_2=-i(\p_\xi\Psi_2)^{-1}\p_\xi,\qquad \cL_2^*=\p_\xi\Big(\frac{i\cdot}{\p_\xi\Psi_2}\Big).
\end{equation}
Then $\ds\cL_2e^{i\Psi_2}=e^{i\Psi_2}$.
Collecting \eqref{InCone5}, \eqref{InCone6}, \eqref{InCone8} with $M_1>0$ large enough yields
\begin{equation}\label{InCone11}
\begin{split}
|\p_\xi\Psi_2|&\gt\max\{2^{-3k_1}t|\eta|,2^j\},\hspace{1cm}\eta\in\supp\chi^I_{high},\\
|\p_\xi\Psi_2|&\gt2^j\gt t|\eta|,\hspace{2.6cm}\eta\in\supp\chi^I_{low}.
\end{split}
\end{equation}
On the other hand, for $l\ge2$, \eqref{InCone4} implies
\begin{equation}\label{InCone12}
|\p^l_\xi\Psi_2|=|t\p^l_\xi\Phi|=|t\eta\Lambda^{(l+1)}(\xi-\tilde r_1\eta)|\ls t|\eta|,
\quad\tilde r_1\in[0,1].
\end{equation}
Applying the method of stationary phase, we arrive at
\begin{equation}\label{InCone13}
\begin{split}
|K^I_{high}|&=\Big|\iiint_{\R^3}\chi^I_{high}(\eta)\cL_2^5(e^{i\Psi_2})m_{++-}
\psi_k(\xi)\psi_{[[k_1]]}(\xi-\eta)\psi_{[[k_2]]}(\eta-\zeta)
\psi_{[[k_3]]}(\zeta)d\xi d\eta d\zeta\Big|\\
&\ls\iiint_{\R^3}\chi^I_{high}(\eta)\Big|(\cL_2^*)^5[m_{++-}
\psi_k(\xi)\psi_{[[k_1]]}(\xi-\eta)\psi_{[[k_2]]}(\eta-\zeta)
\psi_{[[k_3]]}(\zeta)]\Big|d\xi d\eta d\zeta.
\end{split}
\end{equation}
In view of \eqref{symbol:m:bdd}, the worst term $(\cL_2^*)^5[\cdots]$ in \eqref{InCone13} can be
estimated by \eqref{InCone11} and \eqref{InCone12} as follows
\begin{equation}\label{InCone14}
\frac{|\p^2_\xi\Psi_2|^5}{(\p_\xi\Psi_2)^{10}}
\ls\frac{t^5|\eta|^5}{(\p_\xi\Psi_2)^{10}}
\ls2^{21k_1-3j}t^{-2}\eta^{-2},\qquad\eta\in\supp\chi^I_{high}.
\end{equation}
Note that $\chi^I_{high}(\eta)$ vanishes in a neighbourhood of the origin.
Then it follows from the integration by parts in $\eta$ and \eqref{cutoff:def}-\eqref{InCone8} that
\begin{equation}\label{InCone15}
\begin{split}
&\Big|\iiint_{\R^3}\psi_{[[k_1]]}(\xi-\eta)\psi_{[[k_2]]}(\eta-\zeta)
\psi_{[[k_3]]}(\zeta)\chi^I_{high}(\eta)\eta^{-2}d\xi d\eta d\zeta\Big|\\
=&\Big|\iiint_{\R^3}\psi_{[[k_1]]}(\tilde\xi)\psi_{[[k_2]]}(\eta-\zeta)
\psi_{[[k_3]]}(\zeta)\chi^I_{high}(\eta)\eta^{-2}d\tilde\xi d\eta d\zeta\Big|\\
\ls&2^{k_1}\Big|\iint_{\R^2}\psi_{[[k_2]]}(\eta-\zeta)\psi_{[[k_3]]}(\zeta)
\chi^I_{high}(\eta)d(-\eta^{-1})~d\zeta\Big|\\
=&2^{k_1}\Big|\iint_{\R^2}\p_\eta(\psi_{[[k_2]]}(\eta-\zeta)\chi^I_{high}(\eta))
\psi_{[[k_3]]}(\zeta)\eta^{-1}d\eta d\zeta\Big|\\
\ls&\frac{t}{2^{j+2k_1}}\Big\{2^{k_3}\int_{\R}|\p_\eta(\chi^I_{high}(\eta))|d\eta
+\iint_{\R^2}|\p_\eta(\psi_{[[k_2]]}(\eta-\zeta))|\psi_{[[k_3]]}(\zeta)d\eta d\zeta\Big\}\\
\ls&\frac{t}{2^{j+2k_1}}\Big\{t2^{k_3-j-3k_1}\int_{\R}|\chi'\Big(\frac{t|\eta|}{2^{j+3k_1+M_1}}\Big)|d\eta
+\iint_{\R^2}|\p_\eta(\psi_{[[k_2]]}(\eta-\zeta))|\psi_{[[k_3]]}(\zeta)d\eta d\zeta\Big\}\\
\ls&\frac{t2^{k_3}}{2^{j+2k_1}}.
\end{split}
\end{equation}
This, together with \eqref{InCone13}, \eqref{InCone14}, \eqref{symbol:m:bdd} and the condition $j\ge\max\{j_1,j_2,j_3\}+O(1)$,
yields
\begin{equation}\label{InCone16}
|K^I_{high}|\ls2^{21k_1-2j/3}t^{-1}(1+|x-x_1|+|x-x_2|+|x-x_3|)^{-10/3}.
\end{equation}
Next, we turn to the estimate of $K^I_{low}$.
For $\ds\eta\in\supp\chi^I_{low}$, one has $|\eta|\ls2^jt^{-1}$ and $\frac{|\p^2_\xi\Psi_2|^5}{(\p_\xi\Psi_2)^{10}}
\ls2^{-5j}$.
Thus, we can get from \eqref{InCone8} and \eqref{symbol:m:bdd} that
\begin{equation}\label{InCone16low}
\begin{split}
|K^I_{low}|&\ls\iiint_{\R^3}\chi^I_{low}(\eta)|(\cL_2^*)^5[m_{++-}
\psi_k(\xi)\psi_{[[k_1]]}(\xi-\eta)\psi_{[[k_2]]}(\eta-\zeta)
\psi_{[[k_3]]}(\zeta)]|d\xi d\eta d\zeta\\
&\ls2^{k_1-5j}\sum_{l=0}^{5}\iiint_{\R^3}|\p_\xi^l(\psi_k(\xi)\psi_{[[k_1]]}(\xi-\eta))|
\psi_{[[k_3]]}(\zeta)\chi^I_{low}(\eta)d\xi d\eta d\zeta\\
&\ls2^{3k_1-4j}t^{-1}\ls2^{3k_1-2j/3}t^{-1}(1+|x-x_1|+|x-x_2|+|x-x_3|)^{-10/3}.
\end{split}
\end{equation}

\vskip 0.2 true cm

\textbf{$(B_1)$ Estimate of $K^I_{med}$}

\vskip 0.1 true cm

Set
\begin{equation}\label{tildecL2:def}
\tilde\cL_2=-i(\p_\zeta\Psi_2)^{-1}\p_\zeta,
\qquad\tilde\cL_2^*=\p_\zeta\Big(\frac{i\cdot}{\p_\zeta\Psi_2}\Big).
\end{equation}
Then $\ds\tilde\cL_2e^{i\Psi_2}=e^{i\Psi_2}$.

The condition of $j\ge\max\{j_2,j_3\}+3k_1+2M_1$ and \eqref{InCone5}, \eqref{InCone6}, \eqref{InCone8}
with $M_1>0$ large enough ensure that
\begin{equation*}
|\p_\zeta\Psi_2|\gt2^{-3k_1}t|\eta|\gt2^{j-3k_1},\quad\eta\in\supp\chi^I_{med}.
\end{equation*}
Note that analogously to \eqref{InCone12}-\eqref{InCone16low}, one has
\begin{equation}\label{InCone17}
\begin{split}
&|\p^l_\zeta\Psi_2|=|t\eta\Lambda^{(l+1)}(\zeta-\tilde r_2\eta)|\ls t|\eta|,
\quad\tilde r_2\in[0,1],\quad l\ge2,\\
&\frac{|\p^2_\zeta\Psi_2|^l}{|\p_\zeta\Psi_2|^{l+5}}
\ls\frac{t^l|\eta|^l}{(\p_\zeta\Psi_2)^{l+5}}\ls2^{30k_1-5j},\quad l=0,\cdots,5,
\end{split}
\end{equation}
and
\begin{equation}\label{InCone18}
\begin{split}
|K^I_{med}|&\ls\iiint_{\R^3}\chi^I_{med}(\eta)|(\tilde\cL_2^*)^5[m_{++-}
\psi_k(\xi)\psi_{[[k_1]]}(\xi-\eta)\psi_{[[k_2]]}(\eta-\zeta)
\psi_{[[k_3]]}(\zeta)]|d\xi d\eta d\zeta\\
&\ls2^{31k_1-5j}\sum_{l=0}^{5}
\iiint_{\R^3}|\p_\zeta^l(\psi_{[[k_2]]}(\eta-\zeta)\psi_{[[k_3]]}(\zeta))|
\psi_{[[k_1]]}(\xi-\eta)\chi^I_{med}(\eta)d\xi d\eta d\zeta\\
&\ls2^{36k_1-2j/3}t^{-1}(1+|x-x_1|+|x-x_2|+|x-x_3|)^{-10/3}.
\end{split}
\end{equation}
Thus, combining \eqref{InCone10}, \eqref{InCone16}, \eqref{InCone16low}, \eqref{InCone18} with $2N\ge N_1+37$ implies
\begin{equation}\label{InCone19}
\begin{split}
&\sum_{\substack{j,j_1,j_2,j_3,k\ge-1,\\j\ge\max\{j_1,j_2,j_3\}+O(1),\\j\ge\max\{j_2,j_3\}+3k_3+2M_1}}
\sum_{\substack{(k_1,k_2,k_3)\in\cY_k,\\k_3-O(1)\le k_1\le k_3}}
2^{j\alpha+N_1k}\|I_{kk_1k_2k_3}^{jj_1j_2j_3}\id_{I_{in}}(t)\|_{L^2(\R)}\\
&\ls\sum_{\substack{j,k_1,k_2,k_3\ge-1,\\|k_1-k_3|\le O(1)}}
2^{k_1(N_1+36)-j/6}t^{-1}(2+j)^3\|P_{k_1}V\|_{L^2}\|P_{k_2}V\|_{L^2}
\|P_{k_3}V\|_{L^2}\\
&\ls\ve_1^3(1+t)^{-1}.
\end{split}
\end{equation}
Finally, collecting \eqref{InCone3}, \eqref{InCone9} and \eqref{InCone19} leads to
\begin{equation}\label{InCone20}
\begin{split}
\sum_{j,j_1,j_2,j_3,k\ge-1}
\sum_{\substack{(k_1,k_2,k_3)\in\cY_k,\\k_3-O(1)\le k_1\le k_3}}2^{j\alpha+N_1k}
\|I_{kk_1k_2k_3}^{jj_1j_2j_3}\id_{I_{in}}(t)\|_{L^2(\R)}
\ls\ve_1^3(1+t)^{-2\alpha},
\end{split}
\end{equation}
which finishes the proof of \eqref{InCone} for Case 1 and $t\ge1$.

\vskip 0.2cm

\noindent\textbf{Case 2. $k_1\ge k_3$}

\vskip 0.1cm

For $\max\{k_2,k_3\}\ge k_1-O(1)$, since the related treatment is analogous to that
in Case 1, the related details are omitted.

Next, we deal with the case of  $\max\{k_2,k_3\}\le k_1-O(1)$.
At this time, $|k-k_1|\le O(1)$ and $\med\{k_1,k_2,k_3\}=\max\{k_2,k_3\}$ hold.
Similarly to Case 1, we now analyze the critical points of $\Psi_2$ in \eqref{InCone1}.
If $j\ge j_1+O(1)$, then one has $|x-x_1|\approx2^j$.
On the other hand, it holds that
\begin{equation*}
|\eta|\le|\zeta-\eta|+|\zeta|\ls2^{\max\{k_2,k_3\}}\ll|\xi|\approx2^{k_1}.
\end{equation*}
This, together with \eqref{InCone4}, yields
\begin{equation}\label{InCone21}
|\p_\xi\Phi|\approx2^{-3k_1}|\eta|.
\end{equation}
In addition, \eqref{InCone4} and $|\zeta-r_2\eta|=|r_2(\zeta-\eta)+(1-r_2)\zeta|\ls2^{\max\{k_2,k_3\}}$ show that
\begin{equation}\label{InCone22}
2^{-3\max\{k_2,k_3\}}|\eta|\ls|\p_\zeta\Phi|\ls|\eta|.
\end{equation}
As in Case 1 with \eqref{InCone21} and \eqref{InCone22} instead of \eqref{InCone5},
we next discuss the frequency $\eta$ so that  the kernel $K_2$ in \eqref{InCone1} can be estimated.
For the low frequency mode $t|\eta|2^{-3k_1}\ll2^j$, one has $|\p_\xi\Psi_2|\ge|x-x_1|-t|\p_\xi\Phi|$,
which implies that there is no critical point for $\Psi_2$.
For the high frequency mode $t|\eta|2^{-3k_1}\gt2^j$, \eqref{InCone22} shows that the critical points of $\Psi_2$ are
contained in the scope of $\max\{j_2,j_3\}\ge j+3k_1-3\max\{k_2,k_3\}-O(1)$.
Based on this, we write
\begin{equation}\label{InCone23}
\begin{split}
&K_2=K^{II}_{high}+K^{II}_{low},\\
&K^{II}_{high}=\iiint_{\R^3}\chi^{II}_{high}(\eta)e^{i\Psi_2}m_{++-}\psi_k(\xi)
\psi_{[[k_1]]}(\xi-\eta)\psi_{[[k_2]]}(\eta-\zeta)
\psi_{[[k_3]]}(\zeta)d\xi d\eta d\zeta,\\
&K^{II}_{low}=\iiint_{\R^3}\chi^{II}_{low}(\eta)e^{i\Psi_2}m_{++-}\psi_k(\xi)
\psi_{[[k_1]]}(\xi-\eta)\psi_{[[k_2]]}(\eta-\zeta)
\psi_{[[k_3]]}(\zeta)d\xi d\eta d\zeta,
\end{split}
\end{equation}
where
\begin{equation*}
\chi^{II}_{high}(\eta)=\chi\Big(\frac{t|\eta|}{2^{j+3k_1-M_2}}\Big),\quad
\chi^{II}_{low}(\eta)=1-\chi\Big(\frac{t|\eta|}{2^{j+3k_1-M_2}}\Big),
\end{equation*}
$\chi$ is defined by \eqref{cutoff:def}, and $M_2>0$ is a fixed and large enough number.
Then one has
\begin{equation}\label{InCone24}
\supp\chi^{II}_{high}\subset\{t|\eta|\ge2^{j+3k_1-M_2}\},\quad
\supp\chi^{II}_{low}\subset\{t|\eta|\le2^{j+3k_1-M_2+1}\}.
\end{equation}

\vskip 0.2cm

\noindent\textbf{Case 2.1. $j\ge j_1+O(1)$ and
$\max\{j_2,j_3\}\le j+3k_1-3\max\{k_2,k_3\}-2M_2$}
\vskip 0.2cm

\textbf{$(A_2)$ Estimate of $K^{II}_{high}$}

\vskip 0.1cm

For $\eta\in\supp\chi^{II}_{high}$, the condition of $\max\{j_2,j_3\}\le j+3k_1-3\max\{k_2,k_3\}-2M_2$,
\eqref{InCone22} and \eqref{InCone24} ensure
\begin{equation*}
t|\p_\zeta\Phi|\gt2^{-3\max\{k_2,k_3\}}t|\eta|\gt2^{j+3k_1-3\max\{k_2,k_3\}-M_2}
\gt2^{\max\{j_2,j_3\}+M_2}.
\end{equation*}
This, together with \eqref{InCone6} and large $M_2>0$, leads to
\begin{equation}\label{InCone25}
|\p_\zeta\Psi_2|\gt t|\p_\zeta\Phi|
\gt\max\{2^{-3\max\{k_2,k_3\}}t|\eta|,2^{j+3k_1-3\max\{k_2,k_3\}}\},
\quad\eta\in\supp\chi^{II}_{high}.
\end{equation}
On the other hand, one has
\begin{equation}\label{InCone26}
1+|x-x_1|+|x-x_2|+|x-x_3|\ls2^{\max\{j,j_2,j_3\}}\ls2^{j+3k_1-3\max\{k_2,k_3\}}.
\end{equation}
It follows from the first line of \eqref{InCone17} and \eqref{InCone25} that
\begin{equation*}
\frac{|\p^2_\zeta\Psi_2|^5}{(\p_\zeta\Psi_2)^{10}}
\ls\frac{t^5|\eta|^5}{(\p_\zeta\Psi_2)^{10}}
\ls2^{30\max\{k_2,k_3\}-3j-9k_1}t^{-2}\eta^{-2}.
\end{equation*}
As in Case 1.2.2, we can achieve
\begin{equation}\label{InCone27}
\begin{split}
|K^{II}_{high}|&\ls\iiint_{\R^3}\chi^{II}_{high}(\eta)|(\tilde\cL_2^*)^5[m_{++-}
\psi_k(\xi)\psi_{[[k_1]]}(\xi-\eta)\psi_{[[k_2]]}(\eta-\zeta)
\psi_{[[k_3]]}(\zeta)]|d\xi d\eta d\zeta\\
&\ls2^{31\max\{k_2,k_3\}-3j-9k_1}t^{-2}\sum_{l=0}^{5}
\iiint_{\R^3}|\p_\zeta^l(\psi_{[[k_2]]}(\eta-\zeta)\psi_{[[k_3]]}(\zeta))|\\
&\hspace{6cm}\times\psi_{[[k_1]]}(\xi-\eta)\chi^{II}_{high}(\eta)
\eta^{-2}d\xi d\eta d\zeta,\\
&\ls2^{32\max\{k_2,k_3\}-4j-11k_1}t^{-1}\\
&\ls2^{21\max\{k_2,k_3\}-2j/3}t^{-1}(1+|x-x_1|+|x-x_2|+|x-x_3|)^{-10/3}.
\end{split}
\end{equation}
where $\tilde\cL_2$ is defined by \eqref{tildecL2:def} and \eqref{InCone26} is used.

\vskip 0.2 true cm

\textbf{$(B_2)$ Estimate of $K^{II}_{low}$}

\vskip 0.1 true cm

By \eqref{InCone6}, \eqref{InCone21} and \eqref{InCone24}, we have
\begin{equation}\label{YH-2}
|\p_\xi\Psi_2|\gt\max\{2^j,t|\eta|2^{-3k_1}\},\qquad\eta\in\supp\chi^{II}_{low}.
\end{equation}

In addition, one has from \eqref{InCone4} and \eqref{InCone24} that
\begin{equation}\label{YH-3}
|\p^l_\xi\Psi_2|=|t\p^l_\xi\Phi|=|t\eta\Lambda^{(l+1)}(\xi-\tilde r_1\eta)|
\ls2^{-(l+2)k_1}t|\eta|\ls|\p_\xi\Psi_2|,\quad l\ge2.
\end{equation}
Based on \eqref{YH-2}-\eqref{YH-3}, we conclude from \eqref{InCone24} that
\begin{equation}\label{InCone28}
\begin{split}
|K^{II}_{low}|&\ls\iiint_{\R^3}\chi^{II}_{low}(\eta)|(\cL_2^*)^5[m_{++-}
\psi_k(\xi)\psi_{[[k_1]]}(\xi-\eta)\psi_{[[k_2]]}(\eta-\zeta)
\psi_{[[k_3]]}(\zeta)]|d\xi d\eta d\zeta\\
&\ls2^{\max\{k_2,k_3\}-5j}\sum_{l=0}^{5}\iiint_{\R^3}|\p_\xi^l(\psi_k(\xi)\psi_{[[k_1]]}(\xi-\eta))|
\psi_{[[k_3]]}(\zeta)\chi^I_{low}(\eta)d\xi d\eta d\zeta\\
&\ls2^{2\max\{k_2,k_3\}+4k_1-4j}t^{-1}\\
&\ls2^{14k_1-2j/3}t^{-1}(1+|x-x_1|+|x-x_2|+|x-x_3|)^{-10/3},
\end{split}
\end{equation}
where $\cL_2$ is defined by \eqref{cL2:def} and \eqref{InCone26} is  used.
Combining \eqref{InCone27} and \eqref{InCone28} with $N\ge N_1+15$ yields
\begin{equation}\label{InCone29}
\begin{split}
&\sum_{\substack{j,j_1,j_2,j_3,k\ge-1,\\j\ge j_1+O(1),\\
\max\{j_2,j_3\}\le j+3k_1-3\max\{k_2,k_3\}-2M_2}}
\sum_{\substack{(k_1,k_2,k_3)\in\cY_k,\\k_1\ge k_3,\max\{k_2,k_3\}\le k_1-O(1)}}
2^{j\alpha+N_1k}\|I_{kk_1k_2k_3}^{jj_1j_2j_3}\id_{I_{in}}(t)\|_{L^2(\R)}\\
&\ls\sum_{j,k_1,k_2,k_3\ge-1}2^{k_1(N_1+14)+8\max\{k_2,k_3\}-j/6}
t^{-1}(5+j+k_1)^3\|P_{k_1}V\|_{L^2}\|P_{k_2}V\|_{L^2}\|P_{k_3}V\|_{L^2}\\
&\ls\ve_1^3(1+t)^{-1}.
\end{split}
\end{equation}

\vskip 0.2 true cm

\noindent\textbf{Case 2.2. $j\le j_1+O(1)$ or $\max\{j_2,j_3\}\ge j+3k_1-3\max\{k_2,k_3\}-2M_2$}
\vskip 0.2 true cm

\noindent\textbf{Case 2.2.1. $\max\{j_2,j_3\}\ge j+3k_1-3\max\{k_2,k_3\}-2M_2$}

\vskip 0.1 true cm

Without loss of generality, $\max\{j_2,j_3\}=j_2$ is assumed.
When $\alpha=1/2$, by the assumption \eqref{thm1BA1} of $\|Q_{j_2}P_{k_2}V\|_{L^2}$,
the produced factor $2^{-j_2/2}$ will provide the number $2^{-j/2}$ with an additional $2^{-3k_1/2}$
regularity. This can compensate the loss of regularity which is
caused by $\|e^{it\Lambda}P_{[[k_1]]}\cV_1\|_{L^\infty}$ and \eqref{loc:disp}.

Similarly to \eqref{InCone2} and \eqref{InCone3}, from \eqref{loc:disp} with $\beta=1/2$, \eqref{thm1BA1}
and \eqref{trilin} with $N_1\ge10$, one has
\begin{equation}\label{InCone30}
\begin{split}
&\sum_{\substack{j,j_1,j_2,j_3,k\ge-1,\\
\max\{j_2,j_3\}\ge j+3k_1-3\max\{k_2,k_3\}-2M_2}}
\sum_{\substack{(k_1,k_2,k_3)\in\cY_k,\\k_1\ge k_3,\max\{k_2,k_3\}\le k_1-O(1)}}
2^{j/2+N_1k}\|I_{kk_1k_2k_3}^{jj_1j_2j_3}\id_{I_{in}}(t)\|_{L^2(\R)}\\
&\ls\sum_{\substack{j_1,j_2,j_3\ge-1,\\k_1,k_2,k_3\ge-1}}
\sum_{j\le j_2-3k_1+3\max\{k_2,k_3\}+2M_2}2^{7\max\{k_2,k_3\}+k_1N_1+j/2}
\|e^{it\Lambda}P_{[[k_1]]}\cV_1\|_{L^\infty}\\
&\hspace{5cm}\times\|Q_{j_2}P_{k_2}V\|_{L^2}
\|e^{-it\Lambda}P_{[[k_3]]}\cV_3\|_{L^\infty}\\
&\ls\sum_{\substack{j_1,j_2,j_3\ge-1,\\k_1,k_2,k_3\ge-1}}
2^{17\max\{k_2,k_3\}/2+k_1(N_1-3/2)+j_2/2}
\|e^{it\Lambda}P_{[[k_1]]}\cV_1\|_{L^\infty}
\|e^{-it\Lambda}P_{[[k_3]]}\cV_3\|_{L^\infty}\|Q_{j_2}P_{k_2}V\|_{L^2}\\
&\ls\sum_{\substack{j_1,j_2,j_3\ge-1,\\k_1,k_2,k_3\ge-1}}
2^{k_1N_1+10\max\{k_2,k_3\}+(j_1+j_2+j_3)/2}(1+t)^{-1}\|Q_{j_1}P_{k_1}V\|_{L^2}
\|Q_{j_2}P_{k_2}V\|_{L^2}\|Q_{j_3}P_{k_3}V\|_{L^2}\\
&\ls\ve_1^3(1+t)^{-1}.
\end{split}
\end{equation}

When $\alpha\in(0,1/2)$, instead of \eqref{InCone30}, applying \eqref{loc:disp} to $P_{k_3}V_-$
with $\beta=\alpha$ , \eqref{disp:Lp} to $P_{k_1}V$ with $p=2/(1-2\alpha)$ and the Bernstein
inequality to $P_{[[k_2]]}Q_{j_2}P_{k_2}V$ leads to
\begin{equation}\label{InCone31}
\begin{split}
&\sum_{\substack{j,j_1,j_2,j_3,k\ge-1,\\
\max\{j_2,j_3\}\ge j+3k_1-3\max\{k_2,k_3\}-2M_2}}
\sum_{\substack{(k_1,k_2,k_3)\in\cY_k,\\k_1\ge k_3,\max\{k_2,k_3\}\le k_1-O(1)}}
2^{j\alpha+N_1k}\|I_{kk_1k_2k_3}^{jj_1j_2j_3}\id_{I_{in}}(t)\|_{L^2(\R)}\\
&\ls\sum_{\substack{j_1,j_2,j_3\ge-1,\\k_1,k_2,k_3\ge-1}}
\sum_{j\le j_2-3k_1+3\max\{k_2,k_3\}+2M_2}2^{7\max\{k_2,k_3\}+k_1N_1+j\alpha}
\|e^{it\Lambda}P_{[[k_1]]}Q_{j_1}P_{k_1}V\|_{L^{2/(1-2\alpha)}}\\
&\hspace{5cm}\times\|e^{it\Lambda}P_{[[k_2]]}Q_{j_2}P_{k_2}V\|_{L^{1/\alpha}}
\|e^{-it\Lambda}P_{[[k_3]]}Q_{j_3}P_{k_3}V_-\|_{L^\infty}\\
&\ls\sum_{\substack{j_1,j_2,j_3\ge-1,\\k_1,k_2,k_3\ge-1}}
2^{17\max\{k_2,k_3\}/2+k_1(N_1-3\alpha)+j_2\alpha+k_2/2}
\|e^{it\Lambda}P_{[[k_1]]}Q_{j_1}P_{k_1}V\|_{L^{2/(1-2\alpha)}}\\
&\hspace{5cm}\times\|e^{it\Lambda}P_{[[k_2]]}Q_{j_2}P_{k_2}V\|_{L^2}
\|e^{-it\Lambda}P_{[[k_3]]}Q_{j_3}P_{k_3}V_-\|_{L^\infty}\\
&\ls t^{-2\alpha}\sum_{\substack{j_1,j_2,j_3\ge-1,\\k_1,k_2,k_3\ge-1}}
2^{k_1N_1+11\max\{k_2,k_3\}+(j_1+j_2+j_3)\alpha}\|Q_{j_1}P_{k_1}V\|_{L^2}
\|Q_{j_2}P_{k_2}V\|_{L^2}\|Q_{j_3}P_{k_3}V\|_{L^2}\\
&\ls\ve_1^3(1+t)^{-2\alpha},
\end{split}
\end{equation}
where  \eqref{thm1BA1} is used.

\vskip 0.2 true cm

\noindent\textbf{Case 2.2.2. $j\le j_1+O(1)$}

\vskip 0.1 true cm

Analogously to Case 2.2.1, by utilizing \eqref{loc:disp} with $\beta=\alpha$, one can achieve
\begin{equation*}\label{InCone32}
\begin{split}
&\sum_{\substack{j,j_1,j_2,j_3,k\ge-1,\\j\le j_1+O(1)}}
\sum_{\substack{(k_1,k_2,k_3)\in\cY_k,\\k_1\ge k_3,\max\{k_2,k_3\}\le k_1-O(1)}}
2^{j\alpha+N_1k}\|I_{kk_1k_2k_3}^{jj_1j_2j_3}\id_{I_{in}}(t)\|_{L^2(\R)}\\
\end{split}
\end{equation*}

\begin{equation}\label{InCone32}
\begin{split}
&\ls\sum_{\substack{j_1,j_2,j_3\ge-1,\\k_1,k_2,k_3\ge-1}}
\sum_{j\le j_1+O(1)}2^{7\max\{k_2,k_3\}+k_1N_1+j\alpha}\|Q_{j_1}P_{k_1}V\|_{L^2}
\|e^{it\Lambda}P_{[[k_2]]}\cV_2\|_{L^\infty}
\|e^{-it\Lambda}P_{[[k_3]]}\cV_3\|_{L^\infty}\\
&\ls\sum_{\substack{j_1,j_2,j_3\ge-1,\\k_1,k_2,k_3\ge-1}}
2^{7\max\{k_2,k_3\}+k_1N_1+j_1\alpha}\|Q_{j_1}P_{k_1}V\|_{L^2}
\|e^{it\Lambda}P_{[[k_2]]}\cV_2\|_{L^\infty}
\|e^{-it\Lambda}P_{[[k_3]]}\cV_3\|_{L^\infty}\\
&\ls(1+t)^{-2\alpha}\sum_{\substack{j_1,j_2,j_3\ge-1,\\k_1,k_2,k_3\ge-1}}
2^{k_1N_1+10\max\{k_2,k_3\}+(j_1+j_2+j_3)\alpha}\|Q_{j_1}P_{k_1}V\|_{L^2}
\|Q_{j_2}P_{k_2}V\|_{L^2}\|Q_{j_3}P_{k_3}V\|_{L^2}\\
&\ls\ve_1^3(1+t)^{-2\alpha}.
\end{split}
\end{equation}
Collecting \eqref{InCone20} and \eqref{InCone29}-\eqref{InCone32} implies \eqref{InCone} for $t\ge1$.

\vskip 0.1cm

At last, we turn to the proof of \eqref{InCone} for $t\le1$.
For $t\le1$, note that $j\le\log_2(1+t)+O(1)\le O(1)$.
Then the related treatments are similar to those in Case 1.1 \eqref{InCone3} and Case 2.2.2 \eqref{InCone32}, respectively.
This completes the proof of \eqref{InCone}.

\end{proof}

\subsection{Estimates of the quartic and higher order nonlinearities}

\begin{lemma}\label{lem:quartic}
Under the bootstrap assumption \eqref{thm1BA1}, it holds that for $\alpha\in(0,1/2]$ and $t\ge0$,
\begin{equation}\label{quartic}
\sum_{j,k\ge-1}2^{j\alpha+N_1k}\Big(\|Q_j\cQ_k(t)\|_{L^2(\R)}+
\|Q_jP_ke^{-it\Lambda}\cN^I_4(U)\|_{L^2(\R)}\Big)
\ls\ve_1^4(1+t)^{-2\alpha}.
\end{equation}
\end{lemma}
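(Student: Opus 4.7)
The plan is to reduce both $\cQ_k(t)$ and $\cN^I_4(U)$ to expressions that are genuinely quartic (or higher) in $U$, and then exploit the extra factor of $U$ compared with the cubic case of Lemma~\ref{lem:cubic} to gain an additional $(1+t)^{-\alpha}$ decay factor.

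First I would use the identity \eqref{profile:eqn5} to rewrite every $\p_tV_\mu$ appearing in the definition \eqref{quartic:def} of $\cQ_k(t)$, turning $\cQ_k(t)$ into a finite sum of quartic (and quintic) pseudoproducts in $U$ with symbols of the form $\Phi_{\mu_1\mu_2\mu_3}^{-1}m_{\mu_1\mu_2\mu_3}\cdot a^I_{\nu\mu'_1\mu'_2}$ or $\Phi_{\mu_1\mu_2\mu_3}^{-1}m_{\mu_1\mu_2\mu_3}\cdot b_{\mu'_1\mu'_2\mu'_3}$. For $(\mu_1,\mu_2,\mu_3)\in A_\Phi^{good}$ the bound \eqref{3phase:bdd} controls $\Phi_{\mu_1\mu_2\mu_3}^{-1}$, while for the remaining branch the frequency restriction $\max\{k_1,k_2\}\le k_3-O(1)$ gives $|\Phi_{++-}|\gtrsim 2^{k_3}$ via \eqref{badphase:bdd}, so in every case the resulting multilinear symbol obeys admissible bounds (to be recorded in the Appendix, analogous to those used in Lemma~\ref{lem:cubic}). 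The term $P_ke^{-it\Lambda}\cN^I_4(U)$ is already quartic or higher by its definition \eqref{cN4:def}, and the same type of symbol bound applies.

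Next, for each localized quartic piece I would apply the multilinear H\"older inequality \eqref{Holder} to bound its $L^2$ norm by $\|K\|_{L^1(\R^4)}\|U_{\mu_1}\|_{L^2}\|U_{\mu_2}\|_{L^\infty}\|U_{\mu_3}\|_{L^\infty}\|U_{\mu_4}\|_{L^\infty}$, where $K$ is the Schwartz kernel of the localized pseudoproduct operator. The kernel $L^1$-norm is handled by Fubini from the symbol estimates. For the three $L^\infty$ factors I insert the profile $V_{\mu_l}=e^{-it\mu_l\Lambda}U_{\mu_l}$, apply \eqref{loc:disp} with $\beta=\alpha$ to each one, and sum over the spatial dyadic indices using the bootstrap \eqref{thm1BA1}. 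This yields a total time decay of $(1+t)^{-3\alpha}$, which is strictly stronger than the required $(1+t)^{-2\alpha}$; the remaining $L^2$ factor absorbs the Sobolev weight $2^{N_1 k}$ using the hypothesis $N\ge 27 \ge N_1+$ (constant).

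The main obstacle is bookkeeping rather than analysis: each of the many sub-cases generated by $(\mu_1,\mu_2,\mu_3)$ and the Littlewood--Paley indices $(k,k_1,k_2,k_3,k_4)$, $(j,j_1,\dots,j_4)$ must be checked to ensure the symbol estimates combined with Bernstein's inequality produce convergent sums. Unlike Lemma~\ref{lem:cubic}, however, there are no resonance obstructions here (the normal form transformations have already eliminated all stationary-phase issues for $(\mu_1,\mu_2,\mu_3)\in A_\Phi^{good}$ or in the $\max\{k_1,k_2\}\le k_3-O(1)$ branch), so no delicate phase analysis is required; the extra factor of $U$ gives a one-factor-of-$(1+t)^{-\alpha}$ safety margin, and the proof reduces to a routine but lengthy application of the estimates already set up in Sections~2 and~4.
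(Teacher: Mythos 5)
Your reduction of $\cQ_k$ via \eqref{profile:eqn5} to quartic and quintic pseudoproducts, and your use of the multilinear H\"older inequality \eqref{Holder} together with the localized dispersive estimate \eqref{loc:disp} ($\beta=\alpha$) on three of the four factors, is exactly the paper's strategy for the region \emph{inside the cone}. But there is a genuine gap in how you handle the output weight $2^{j\alpha}$ in the $Z_\alpha$-norm. Your argument produces a bound on $\|Q_j(\text{quartic})\|_{L^2}$ that is uniform in $j$, with total time decay $(1+t)^{-3\alpha}$. When $2^j\ls 1+t$ the weight $2^{j\alpha}$ costs at most $(1+t)^{\alpha}$, so your ``safety margin'' of one extra factor $(1+t)^{-\alpha}$ is consumed precisely there and you land on $(1+t)^{-2\alpha}$ — this is Case~1 of the paper (see \eqref{quartic7}). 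When $j\ge\log_2(1+t)+O(1)$, however, $2^{j\alpha}$ is unbounded relative to $(1+t)^{\alpha}$ and your scheme gives no decay in $j$ whatsoever, so the sum over $j$ diverges. Nothing in Sections 2 and 4 rescues this.

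The paper closes this region by splitting further. If $\max_{l}|j-j_l|\le O(1)$, the weight $2^{j\alpha}$ is absorbed into $2^{j_1\alpha}$ or $2^{j_4\alpha}$ of an input factor, which the bootstrap \eqref{thm1BA1} controls. If $\max_{l}|j-j_l|\ge O(1)$, one must pass to the physical-space kernel $K_4$ of \eqref{quartic11} and integrate by parts in the frequencies: since $|\Lambda'|\le1$ and the output location is far from every input location (and outside the cone), $\sum_l|\p_{\xi_l}\Psi_4|\gt\max\{1+t,\,2^{\max\{j,j_1,\dots,j_4\}}\}$, so eight integrations by parts yield both the factor $(1+t)^{-2}$ and the factor $2^{-\max\{j,j_1,\dots,j_4\}}$ needed to beat $2^{j\alpha}$ (see \eqref{quartic12}--\eqref{quartic14}). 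Your claim that ``no delicate phase analysis is required'' is therefore too strong: it is true that no \emph{resonance} analysis is needed (the lower bound on $\nabla\Psi_4$ comes from spatial separation alone, not from the structure of $\Phi$), but a stationary-phase/non-stationary-phase decomposition of the kernel is still indispensable for the outside-the-cone, separated-supports regime. To repair the proposal you should add this case distinction on $\{j,j_1,\dots,j_4\}$ and the corresponding kernel estimate.
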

\begin{proof}
Set
\begin{equation}\label{quartic1}
\cQ^I_k=\sum_{\substack{(k_1,k_2,k_3)\in\cY_k,\\
(\mu_1,\mu_2,\mu_3)\in A_\Phi^{good}}}
e^{-it\Lambda}P_kT_{\Phi_{\mu_1\mu_2\mu_3}^{-1}m_{\mu_1\mu_2\mu_3}}
(P_{k_1}U_{\mu_1},P_{k_2}U_{\mu_2},e^{it\mu_3\Lambda}P_{k_3}\p_tV_{\mu_3}),
\end{equation}
which comes from the third term in the expression of $\cQ_k$.

Substituting \eqref{profile:eqn5} into \eqref{quartic1} yields
\begin{equation}\label{quartic2}
\cQ^I_k=\cQ^{II}_k+\cN_{5,k}(U),
\end{equation}
where
\begin{equation*}\label{quartic3}
\begin{split}
\cQ^{II}_k&=\sum_{\substack{(k_1,k_2,k_3)\in\cY_k,\\
(\mu_1,\mu_2,\mu_3)\in A_\Phi^{good}}}
\sum_{\substack{(k_4,k_5)\in\cX_{k_3},\\ \nu_1,\nu_2=\pm}}
e^{-it\Lambda}P_kT_{\Phi_{\mu_1\mu_2\mu_3}^{-1}m_{\mu_1\mu_2\mu_3}}
(P_{k_1}U_{\mu_1},P_{k_2}U_{\mu_2},\\
&\hspace{6cm}P_{k_3}T_{a^I_{\mu_3\nu_1\nu_2}}(P_{k_4}U_{\nu_1},P_{k_5}U_{\nu_2}))\\
\end{split}
\end{equation*}

\begin{equation}\label{quartic3}
\begin{split}
&=\sum_{\substack{(k_1,k_2,k_3)\in\cY_k,\\
(\mu_1,\mu_2,\mu_3)\in A_\Phi^{good}}}
\sum_{\substack{(k_4,k_5)\in\cX_{k_3},\\ \nu_1,\nu_2=\pm}}\sum_{j_1,j_2,j_3,j_4\ge-1}
e^{-it\Lambda}P_kT_{\Phi_{\mu_1\mu_2\mu_3}^{-1}m_{\mu_1\mu_2\mu_3}}
(P_{[[k_1]]}e^{it\mu_1\Lambda}Q_{j_1}P_{k_1}V_{\mu_1},\\
&\quad P_{[[k_2]]}e^{it\mu_2\Lambda}Q_{j_2}P_{k_2}V_{\mu_2},
P_{k_3}T_{a^I_{\mu_3\nu_1\nu_2}}(P_{[[k_4]]}e^{it\nu_1\Lambda}Q_{j_3}P_{k_4}V_{\nu_1},
P_{[[k_5]]}e^{it\nu_2\Lambda}Q_{j_4}P_{k_5}V_{\nu_2})),
\end{split}
\end{equation}

\begin{equation}\label{quartic4}
\cN_{5,k}(U)=\sum_{\substack{(k_1,k_2,k_3)\in\cY_k,\\
(\mu_1,\mu_2,\mu_3)\in A_\Phi^{good}}}
e^{-it\Lambda}P_kT_{\Phi_{\mu_1\mu_2\mu_3}^{-1}m_{\mu_1\mu_2\mu_3}}
(P_{k_1}U_{\mu_1},P_{k_2}U_{\mu_2},P_{k_3}\cN_{3,\mu_3}(U)),
\end{equation}
and $\cN_{3,\mu_3}(U)$ is defined by \eqref{cN3:def}.
Let
\begin{equation}\label{quartic5}
\begin{split}
&\cQ_q:=Q_jP_ke^{-it\Lambda}T_{\Phi_{\mu_1\mu_2\mu_3}^{-1}m_{\mu_1\mu_2\mu_3}}
(P_{[[k_1]]}e^{it\mu_1\Lambda}\sV_1,P_{[[k_2]]}e^{it\mu_2\Lambda}\sV_2,\\
&\hspace{3cm}P_{k_3}T_{a^I_{\mu_3\nu_1\nu_2}}(P_{[[k_4]]}e^{it\nu_1\Lambda}\sV_3,
P_{[[k_5]]}e^{it\nu_2\Lambda}\sV_4)),\\
&\sV_1:=Q_{j_1}P_{k_1}V_{\mu_1},\sV_2:=Q_{j_2}P_{k_2}V_{\mu_2},
\sV_3:=Q_{j_3}P_{k_4}V_{\nu_1},\sV_4:=Q_{j_4}P_{k_5}V_{\nu_2}.
\end{split}
\end{equation}

Analogous to the estimates in Lemmas \ref{lem:outcone} and \ref{lem:incone} for the cubic nonlinearity $\cC_k(s)$,
the proof of \eqref{quartic} will be also separated into two cases.

\vskip 0.2cm

\noindent\textbf{Case 1.} ${\max\{j,j_1,j_2,j_3,j_4\}}\le\log_2(1+t)+O(1)$

\vskip 0.1cm

Comparing to Lemma \ref{lem:incone}, the appeared factor $2^{j\alpha}$ in this case can
be controlled by the additional $(1+t)^{-\alpha}$ decay, which is produced by the quartic nonlinearity.
In addition, due to $(k_1,k_2,k_3)\in\cY_k$ and $(k_4,k_5)\in\cX_{k_3}$, one can see that
$2^k\ls2^{\max\{k_1,k_2,k_3\}}$ and $2^{k_3}\ls2^{\max\{k_4,k_5\}}$ hold. Next
we treat $\cQ_q$ according to the differences of frequencies.

\vskip 0.2cm

\noindent\textbf{Case 1.1.} ${\max\{k_1,k_2,k_3\}}=k_1$

\vskip 0.1cm

In this case, $\med\{k_1,k_2,k_3\}=\max\{k_2,k_3\}$.
Applying \eqref{bilinear:b} and \eqref{trilin:good}, one then has
\begin{equation}\label{quartic6}
\begin{split}
\|\cQ_q\|_{L^2(\R)}&\ls2^{8\max\{k_2,k_3\}}\|\sV_1\|_{L^2}
\|P_{[[k_2]]}e^{it\mu_2\Lambda}\sV_2\|_{L^\infty}
\|T_{a^I_{\mu_3\nu_1\nu_2}}(P_{[[k_4]]}e^{it\nu_1\Lambda}\sV_3,
P_{[[k_5]]}e^{it\nu_2\Lambda}\sV_4)\|_{L^\infty}\\
&\ls2^{8\max\{k_2,k_3\}}\|\sV_1\|_{L^2}
\|P_{[[k_2]]}e^{it\mu_2\Lambda}\sV_2\|_{L^\infty}
\|P_{[[k_4]]}e^{it\nu_1\Lambda}\sV_3\|_{L^\infty}
\|P_{[[k_5]]}e^{it\nu_2\Lambda}\sV_4\|_{L^\infty}.
\end{split}
\end{equation}
Therefore, it can be deduced from \eqref{loc:disp} with $\beta=\alpha$, \eqref{thm1BA1} and \eqref{quartic6} that
\begin{equation}\label{quartic7}
\begin{split}
&\sum_{\substack{(k_1,k_2,k_3)\in\cY_k,\\(\mu_1,\mu_2,\mu_3)\in A_\Phi^{good},\\
{\max\{k_1,k_2,k_3\}}=k_1}}
\sum_{\substack{(k_4,k_5)\in\cX_{k_3},\\ \nu_1,\nu_2=\pm}}
\sum_{\substack{j_1,j_2,j_3,j_4\ge-1,\\j,k\ge-1}}
2^{j\alpha+N_1k}\|\cQ_q\id_{I_{in4}}(t)\|_{L^2(\R)}\\
&\ls\sum_{\substack{(k_1,k_2,k_3)\in\cY_k,\\(k_4,k_5)\in\cX_{k_3},\\
j_1,j_2,j_3,j_4\ge-1}}
\sum_{\substack{(\mu_1,\mu_2,\mu_3)\in A_\Phi^{good},\\ \nu_1,\nu_2=\pm}}
2^{k_1N_1}(1+t)^{\alpha}\|\cQ_q\id_{I_{in4}}(t)\|_{L^2(\R)}\\
&\ls(1+t)^{-2\alpha}\sum_{\substack{k_1,k_2,k_4,k_5\ge-1,\\j_1,j_2,j_3,j_4\ge-1}}
2^{k_1N_1+8\max\{k_2,k_4,k_5\}+2(k_2+k_4+k_5)+\alpha(j_2+j_3+j_4)}\\
&\hspace{3cm}\times\|Q_{j_1}P_{k_1}V\|_{L^2}\|Q_{j_2}P_{k_2}V\|_{L^2}
\|Q_{j_3}P_{k_4}V\|_{L^2}\|Q_{j_4}P_{k_5}V\|_{L^2}\\
&\ls\ve_1^4(1+t)^{-2\alpha},
\end{split}
\end{equation}
where $I_{in4}:=\{t\ge0: {\max\{j,j_1,j_2,j_3,j_4\}}\le\log_2(1+t)+O(1)\}$.

\vskip 0.1cm

\noindent\textbf{Case 1.2.} ${\max\{k_1,k_2,k_3\}}=k_2$

\vskip 0.1cm

Since the related treatment  is similar to that in Case 1.1,  the details are omitted here.

\vskip 0.1cm

\noindent\textbf{Case 1.3.} ${\max\{k_1,k_2,k_3\}}=k_3$

\vskip 0.1cm

In this case, $\med\{k_1,k_2,k_3\}=\max\{k_1,k_2\}$ holds.
For convenience, assume $\max\{k_4,k_5\}=k_5$.
Instead of \eqref{quartic6}, we have
\begin{equation*}
\begin{split}
\|\cQ_q\|_{L^2(\R)}&\ls2^{8\max\{k_1,k_2\}}\|\sV_1\|_{L^\infty}\|\sV_2\|_{L^\infty}
\|T_{a^I_{\mu_3\nu_1\nu_2}}(P_{[[k_4]]}e^{it\nu_1\Lambda}\sV_3,
P_{[[k_5]]}e^{it\nu_2\Lambda}\sV_4)\|_{L^2}\\
&\ls2^{8\max\{k_1,k_2\}}\|P_{[[k_1]]}e^{it\mu_1\Lambda}\sV_1\|_{L^\infty}
\|P_{[[k_2]]}e^{it\mu_2\Lambda}\sV_2\|_{L^\infty}
\|P_{[[k_4]]}e^{it\nu_1\Lambda}\sV_3\|_{L^\infty}\|\sV_4\|_{L^2}.
\end{split}
\end{equation*}
Analogously to \eqref{quartic7}, we can achieve
\begin{equation}\label{quartic8}
\begin{split}
&\sum_{\substack{(k_1,k_2,k_3)\in\cY_k,\\(\mu_1,\mu_2,\mu_3)\in A_\Phi^{good},\\
{\max\{k_1,k_2,k_3\}}=k_3}}
\sum_{\substack{(k_4,k_5)\in\cX_{k_3},\\ \nu_1,\nu_2=\pm}}
\sum_{\substack{j_1,j_2,j_3,j_4\ge-1,\\j,k\ge-1}}
2^{j\alpha+N_1k}\|\cQ_q\id_{I_{in4}}(t)\|_{L^2(\R)}\\
&\ls(1+t)^{-2\alpha}\sum_{\substack{k_1,k_2,k_4,k_5\ge-1,\\j_1,j_2,j_3,j_4\ge-1}}
2^{k_5N_1+8\max\{k_1,k_2,k_4\}+2(k_1+k_2+k_4)+\alpha(j_1+j_2+j_3)}\\
&\hspace{3cm}\times\|Q_{j_1}P_{k_1}V\|_{L^2}\|Q_{j_2}P_{k_2}V\|_{L^2}
\|Q_{j_3}P_{k_4}V\|_{L^2}\|Q_{j_4}P_{k_5}V\|_{L^2}\\
&\ls\ve_1^4(1+t)^{-2\alpha}.
\end{split}
\end{equation}
Collecting \eqref{quartic7} and \eqref{quartic8} yields
\begin{equation}\label{quartic9}
\begin{split}
\sum_{\substack{(k_1,k_2,k_3)\in\cY_k,\\(\mu_1,\mu_2,\mu_3)\in A_\Phi^{good}}}
\sum_{\substack{(k_4,k_5)\in\cX_{k_3},\\ \nu_1,\nu_2=\pm}}
\sum_{\substack{j_1,j_2,j_3,j_4\ge-1,\\j,k\ge-1}}
2^{j\alpha+N_1k}\|\cQ_q\id_{I_{in4}}(t)\|_{L^2(\R)}
\ls\ve_1^4(1+t)^{-2\alpha}.
\end{split}
\end{equation}

\vskip 0.2cm

\noindent\textbf{Case 2.} ${\max\{j,j_1,j_2,j_3,j_4\}}\ge\log_2(1+t)+O(1)$
\vskip 0.1cm

As in Lemma \ref{lem:outcone}, the related treatments will be separated into the following two cases.

\vskip 0.2cm

\noindent\textbf{Case 2.1.} $\ds\max_{l=1,2,3,4}|j-j_l|\le O(1)$
\vskip 0.1cm

In this case, one can take the treatment as in Case 1, where the only difference is
that the appeared factor $2^{j\alpha}$ can be absorbed by $2^{j_1\alpha}$ in \eqref{quartic7} or $2^{j_4\alpha}$ in \eqref{quartic8}.
Then we arrive at
\begin{equation}\label{quartic10}
\sum_{\substack{(k_1,k_2,k_3)\in\cY_k,\\(\mu_1,\mu_2,\mu_3)\in A_\Phi^{good}}}
\sum_{\substack{(k_4,k_5)\in\cX_{k_3},\\ \nu_1,\nu_2=\pm}}
\sum_{\substack{j_1,j_2,j_3,j_4\ge-1,\\ \max_{l=1,2,3,4}|j-j_l|\le O(1)}}
\sum_{j,k\ge-1}2^{j\alpha+N_1k}\|\cQ_q\id_{I_{out4}}(t)\|_{L^2(\R)}
\ls\ve_1^4(1+t)^{-2\alpha},
\end{equation}
where $I_{out4}:=\{t\ge0: {\max\{j,j_1,j_2,j_3,j_4\}}\ge\log_2(1+t)+O(1)\}$.

\vskip 0.2cm

\noindent\textbf{Case 2.2.} $\ds\max_{l=1,2,3,4}|j-j_l|\ge O(1)$
\vskip 0.1cm

Analogously to \eqref{OutCone1}, $I_4$ can be rewritten as
\begin{equation}\label{quartic11}
\begin{split}
&\cQ_q(t,x)=(2\pi)^{-4}\psi_j(x)\int_{\R^4}K_4(x-x_1,x-x_2,x-x_3,x-x_4)
\sV_1(t,x_1)\sV_2(t,x_2)\\
&\hspace{6cm}\times\sV_3(t,x_3)\sV_3(t,x_4)dx_1dx_2dx_3dx_4,\\
\end{split}
\end{equation}
where
\begin{equation}\label{YH-4}
\begin{split}
&K_4(x-x_1,x-x_2,x-x_3,x-x_4):=\int_{\R^4}e^{i\Psi_4}m_4(\xi_1,\xi_2,\xi_3,\xi_4)
d\xi_1d\xi_2d\xi_3d\xi_4,\\
&\Psi_4:=t(-\Lambda(\xi_1+\xi_2+\xi_3+\xi_4)+\mu_1\Lambda(\xi_1)+\mu_2\Lambda(\xi_2)
+\nu_1\Lambda(\xi_3)+\nu_2\Lambda(\xi_4))\\
&\hspace{0.8cm}+\xi_1(x-x_1)+\xi_2(x-x_2)+\xi_3(x-x_3)+\xi_4(x-x_4),\\
&m_4(\xi_1,\xi_2,\xi_3,\xi_4):=(\Phi_{\mu_1\mu_2\mu_3}^{-1}m_{\mu_1\mu_2\mu_3})
(\xi_1,\xi_2,\xi_3+\xi_4)a^I_{\mu_3\nu_1\nu_2}(\xi_3,\xi_4)
\psi_k(\xi_1+\xi_2+\xi_3+\xi_4)\\
&\hspace{3.5cm}\times\psi_{k_3}(\xi_3+\xi_4)\psi_{[[k_1]]}(\xi_1)
\psi_{[[k_2]]}(\xi_2)\psi_{[[k_4]]}(\xi_3)\psi_{[[k_5]]}(\xi_4).
\end{split}
\end{equation}
Denote
\begin{equation*}
\cL_4:=-i(\sum_{l=1}^4|\p_{\xi_l}\Psi_4|^2)^{-1}
\sum_{l=1}^4\p_{\xi_l}\Psi_4\p_{\xi_l}.
\end{equation*}
Then $\cL_4e^{i\Psi_4}=e^{i\Psi_4}$ holds and its adjoint operator $\cL_4^*$ is
\begin{equation*}
\cL_4^*:=i\sum_{l=1}^4\p_{\xi_l}\Big(\frac{\p_{\xi_l}\Psi_4~\cdot}
{\sum_{l=1}^4|\p_{\xi_l}\Psi_4|^2}\Big).
\end{equation*}
The conditions $\max\{j,j_1,j_2,j_3,j_4\}\ge\log_2(1+t)+O(1)$ and $\ds\max_{l=1,2,3,4}|j-j_l|\ge O(1)$ show that when $x\in\supp\psi_j$, $x_l\in\supp\psi_l$, $l=1,2,3,4$, it holds that
\begin{equation*}
\begin{split}
|x-x_1|+|x-x_2|+|x-x_3|+|x-x_4|&\ge2^{O(1)}(1+t),\\
|x-x_1|+|x-x_2|+|x-x_3|+|x-x_4|&\gt2^{\max\{j,j_1,j_2,j_3,j_4\}}.
\end{split}
\end{equation*}
This, together with $|\Lambda'(y)|\le1$, leads to
\begin{equation}\label{quartic12}
\begin{split}
(\sum_{l=1}^4|\p_{\xi_l}\Psi_4|^2)^{1/2}
&\gt|x-x_1|+|x-x_2|+|x-x_3|+|x-x_4|\\
&\gt\max\{1+t,2^{\max\{j,j_1,j_2,j_3,j_4\}}\}.
\end{split}
\end{equation}
On the other hand, one obtains from \eqref{3phase:bdd} and \eqref{quartic11} that for $(\mu_1,\mu_2,\mu_3)\in A_\Phi^{good}$,
\begin{equation}\label{quartic13}
\begin{split}
|\p^l_{\xi_1,\xi_2,\xi_3,\xi_4}\Phi_{\mu_1\mu_2\mu_3}^{-1}(\xi_1,\xi_2,\xi_3+\xi_4)|
&\ls2^{(l+1)\max\{k_1,k_2,k_4,k_5\}},\quad l\ge0,\\
|\p^l_{\xi_1,\xi_2,\xi_3,\xi_4}\Psi_4|&\ls t,\quad l\ge2,
\end{split}
\end{equation}
where $|\xi_1|\approx2^{k_1}$, $|\xi_2|\approx2^{k_2}$, $|\xi_3|\approx2^{k_4}$ and $|\xi_4|\approx2^{k_5}$.

Without loss of generality,  $\max\{k_1,k_2,k_4,k_5\}=k_1$ is assumed.
By the method of stationary phase and \eqref{quartic11}--\eqref{quartic13}, \eqref{symbol:m:bdd}, we have
\begin{equation*}
\begin{split}
&\quad\;|K_4(x-x_1,x-x_2,x-x_3,x-x_4)|\\
&=\Big|\int_{\R^4}\cL_4^8(e^{i\Psi_4})m_4(\xi_1,\xi_2,\xi_3,\xi_4)
d\xi_1d\xi_2d\xi_3d\xi_4\Big|\\
&\ls\int_{\R^4}|(\cL_4^*)^8m_4(\xi_1,\xi_2,\xi_3,\xi_4)|d\xi_1d\xi_2d\xi_3d\xi_4\\
&\ls2^{k_1+k_2+k_4+k_5+10\max\{k_1,k_2,k_4,k_5\}}\Big(1+\sum_{i=1}^4|x-x_i|\Big)^{-8}\\
&\ls2^{11k_1+k_2+k_4+k_5-\max\{j,j_1,j_2,j_3,j_4\}}
(1+t)^{-2}\Big(1+\sum_{i=1}^4|x-x_i|\Big)^{-5}.
\end{split}
\end{equation*}
Similarly to \eqref{OutCone4},
\begin{equation*}
\|\cQ_q(t)\|_{L^2(\R)}
\ls\ve_1^42^{(11-N)(k_1+k_2+k_4+k_5)-5j/9-(j_1+j_2+j_3+j_4)/9}(1+t)^{-2}.
\end{equation*}
This, together with the condition $N\ge N_1+12$, yields
\begin{equation}\label{quartic14}
\sum_{\substack{(k_1,k_2,k_3)\in\cY_k,\\(\mu_1,\mu_2,\mu_3)\in A_\Phi^{good}}}
\sum_{\substack{(k_4,k_5)\in\cX_{k_3},\\ \nu_1,\nu_2=\pm}}
\sum_{\substack{j_1,j_2,j_3,j_4\ge-1,\\ \max_{l=1,2,3,4}|j-j_l|\le O(1)}}
\sum_{j,k\ge-1}2^{j\alpha+N_1k}\|\cQ_q\id_{I_{out4}}(t)\|_{L^2(\R)}
\ls\ve_1^4(1+t)^{-2}.
\end{equation}
Combining \eqref{quartic3}, \eqref{quartic5}, \eqref{quartic9}, \eqref{quartic10} and \eqref{quartic14} leads to
\begin{equation}\label{quartic15}
\sum_{j,k\ge-1}2^{j\alpha+N_1k}\|Q_j\cQ_k^{II}(t)\|_{L^2(\R)}
\ls\ve_1^4(1+t)^{-2\alpha}.
\end{equation}
Note that the estimate \eqref{quartic15} also holds for $\cN_{5,k}(U)$ defined by \eqref{quartic4}
with the first inequality of \eqref{trilin}, here we omit the details.
Thus, we achieve
\begin{equation}\label{quartic16}
\sum_{j,k\ge-1}2^{j\alpha+N_1k}\|Q_j\cQ_k^I(t)\|_{L^2(\R)}
\ls\ve_1^4(1+t)^{-2\alpha}.
\end{equation}
With \eqref{trilinear++-}, one can get the estimate \eqref{quartic16} for the other terms in $\cQ_k$.
The estimate for $P_ke^{-it\Lambda}\cN^I_4(U)$ defined by \eqref{cN4:def} is the same.
Therefore, the proof of \eqref{quartic} is completed.

\end{proof}

\subsection{Estimates of the boundary term $\cB_k$}

\begin{lemma}\label{lem:bdry}
Under the bootstrap assumption \eqref{thm1BA1}, it holds that for $\alpha\in(0,1/2]$ and $t\ge0$,
\begin{equation}\label{bdry}
\sum_{j,k\ge-1}2^{j\alpha+N_1k}\|Q_j\cB_k\|_{L^2(\R)}\ls\ve_1^2.
\end{equation}
\end{lemma}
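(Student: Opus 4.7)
The plan is to estimate the three summands of $\cB_k$ in \eqref{bdry:def} separately at the two endpoints $s=0$ and $s=t$. At $s=0$ everything reduces to multilinear operators applied to $V(0)=U(0)$, bounded by $\|V(0)\|_{Z_\alpha}\le\ve\le\ve_1$ via exactly the same argument used at $s=t$. The bilinear summand produces the dominant contribution of size $\ve_1^2$, while the two trilinear summands produce at most $\ve_1^3\le\ve_1^2$.

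For the bilinear summand $e^{-is\Lambda}P_kT_{\Phi_{\mu_1\mu_2}^{-1}a_{\mu_1\mu_2}}(U_{\mu_1},U_{\mu_2})$, I would substitute $U_{\mu_i}=e^{is\mu_i\Lambda}V_{\mu_i}$, decompose each $V_{\mu_i}=\sum_{j_i,k_i}Q_{j_i}P_{k_i}V_{\mu_i}$, and run the analysis in the style of the continuity proof of Proposition \ref{Prop:Cont}. By \eqref{2phase:bdd1}-\eqref{2phase:bdd2}, the amended symbol $\Phi_{\mu_1\mu_2}^{-1}a_{\mu_1\mu_2}$ and its $\xi$-derivatives are bounded by (at worst) $2^{\min\{k,k_1,k_2\}}$ times those of $a_{\mu_1\mu_2}$, a harmless loss that is absorbed thanks to $N\gg N_1$. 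The argument splits according to the size of $j$: when $j$ is small or when $\max_l|j-j_l|\le O(1)$, apply the bilinear H\"{o}lder estimate \eqref{bilinear:b} combined with Bernstein and the bootstrap \eqref{thm1BA1}; when $j$ is large and $\max_l|j-j_l|\ge O(1)$, integrate by parts in $(\xi_1,\xi_2)$ against the phase $\Psi=s\Phi_{\mu_1\mu_2}+\xi_1(x-x_1)+\xi_2(x-x_2)$, using that in this non-stationary regime $|\p_{\xi_l}\Psi|$ is controlled below by $|x-x_l|$, to gain arbitrarily fast spatial decay of the kernel.

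The two trilinear summands are handled by reusing, essentially verbatim, the kernel analysis from Lemmas \ref{lem:outcone} and \ref{lem:incone}, only now without the time integral $\int_0^tds$. For the good-phase summand, \eqref{3phase:bdd} ensures $|\Phi_{\mu_1\mu_2\mu_3}^{-1}|$ and its derivatives cost at most a harmless power of $2^{\min}$; for the bad-phase summand restricted to $\max\{k_1,k_2\}\le k_3-O(1)$, the factor $|\Phi_{++-}^{-1}|\ls 2^{-k_3}$ from \eqref{badphase:bdd} is in fact a gain. Splitting as before by whether $\max\{j,j_1,j_2,j_3\}$ exceeds $\log_2(1+s)+O(1)$ or not, and by whether $\max_l|j-j_l|\ge O(1)$ or not, the same combination of frequency integration by parts (non-stationary case) and H\"{o}lder-Bernstein-dispersive estimates (stationary case) yields a bound of order $\ve_1^3$, already uniform in $s$, so no time integration is needed to conclude.

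The main obstacle is the bilinear case in the non-stationary regime at large $s$: one must verify that, despite the potentially large coefficient $s$ multiplying $\Phi_{\mu_1\mu_2}$ in the phase, repeated integration by parts in $(\xi_1,\xi_2)$ still produces a kernel whose $L^1$-norm is summable against $2^{j\alpha+N_1k}$ uniformly in $s$. This is exactly the difficulty resolved in Subcase 2.1 of the continuity proposition, and I would transcribe that argument with only the cosmetic modification of the extra $\Phi_{\mu_1\mu_2}^{-1}$ factor, controlled by \eqref{2phase:bdd1}-\eqref{2phase:bdd2}.
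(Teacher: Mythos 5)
Your plan follows the paper's route. The paper splits $\cB_k$ into its bilinear part $\cB_k^I$ and the trilinear part $\cB_k^{II}$, dyadically localizes in physical and frequency variables, distinguishes the regimes according to whether $\max\{j,j_1,j_2\}$ lies above or below $\log_2(1+t)+O(1)$ and whether $\max\{|j-j_1|,|j-j_2|\}\le O(1)$, and uses \eqref{bilinear:a}, \eqref{loc:disp}, Bernstein and \eqref{thm1BA1} on the stationary side and non-stationary-phase integration by parts on the other; the trilinear part is handled exactly as the cubic nonlinearity was (the paper merely asserts that \eqref{bdry2} ``also holds for $\cB_k^{II}$''). Your decomposition and case structure match this.

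One inaccuracy worth correcting in your last paragraph: you correctly identify the obstacle as an $s$-uniform kernel estimate in the non-stationary bilinear regime, but this is not what Subcase 2.1 of Proposition \ref{Prop:Cont} establishes. That argument uses the threshold $j\ge C(T_0)$ and is valid only on a fixed horizon $[0,T_0]$; because the higher phase derivatives are bounded there merely by $s\le T_0$, its constant grows without bound as $T_0\to\infty$, so it cannot be transcribed as is. The modification that makes Lemma \ref{lem:bdry} work is structural rather than cosmetic: replace $C(T_0)$ by the threshold $\max\{j,j_1,j_2\}\ge\log_2(1+t)+O(1)$, which together with $\max\{|j-j_1|,|j-j_2|\}\ge O(1)$ forces $|\p_{\xi_1}\Psi_5|+|\p_{\xi_2}\Psi_5|\gt\max\{1+t,2^{\max\{j,j_1,j_2\}}\}$ and hence uniformly dominates the $O(t)$ growth of the higher phase derivatives. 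You already employ exactly this time-dependent split for the trilinear parts, so there is no missing idea, only an imprecision in attributing where the uniform bound comes from. (Also, the bilinear multiplier bound you need is \eqref{bilinear:a} for $T_{\Phi_{\mu_1\mu_2}^{-1}a_{\mu_1\mu_2}}$, which carries a $2^{5\min\{k_1,k_2\}}$ loss, rather than \eqref{bilinear:b}.)
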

\begin{proof}
Denote
\begin{equation}\label{bdry1}
\begin{split}
\cB_k^I&:=-i\sum_{\mu_1,\mu_2=\pm}e^{-is\Lambda}P_k
T_{\Phi_{\mu_1\mu_2}^{-1}a_{\mu_1\mu_2}}(U_{\mu_1},U_{\mu_2})\Big|_{s=0}^t,\\
\cB_k^{II}&:=-i\sum_{(\mu_1,\mu_2,\mu_3)\in A_\Phi^{good}}\sum_{(k_1,k_2,k_3)\in\cY_k}
e^{-is\Lambda}P_kT_{\Phi_{\mu_1\mu_2\mu_3}^{-1}m_{\mu_1\mu_2\mu_3}}
(P_{k_1}U_{\mu_1},P_{k_2}U_{\mu_2},P_{k_3}U_{\mu_3})\Big|_{s=0}^t\\
&\qquad-i\sum_{\substack{(k_1,k_2,k_3)\in\cY_k,\\ \max\{k_1,k_2\}\le k_3-O(1)}}
e^{-is\Lambda}P_kT_{\Phi_{++-}^{-1}m_{++-}}(P_{k_1}U,P_{k_2}U,P_{k_3}U_-)\Big|_{s=0}^t.
\end{split}
\end{equation}
Then $\cB_k=\cB_k^I+\cB_k^{II}$.
Next we prove
\begin{equation}\label{bdry2}
\sum_{j,k\ge-1}2^{j\alpha+N_1k}\|Q_j\cB_k^I\|_{L^2(\R)}\ls\ve_1^2.
\end{equation}
By  virtue of \eqref{proj:proj}, one can find that
\begin{equation}\label{bdry3}
\begin{split}
&Q_j\cB_k^I=-i\sum_{j_1,j_2\ge-1}\sum_{(k_1,k_2)\in\cX_k}\cB_{kk_1k_2}^{jj_1j_2},\\
&\cB_{kk_1k_2}^{jj_1j_2}:=Q_jP_ke^{-it\Lambda}T_{\Phi_{\mu\nu}^{-1}a_{\mu\nu}}
(e^{it\mu\Lambda}P_{[[k_1]]}Q_{j_1}P_{k_1}V_\mu,e^{it\nu\Lambda}P_{[[k_2]]}Q_{j_2}P_{k_2}V_\nu).
\end{split}
\end{equation}
The proof of \eqref{bdry2} will be separated into two cases as in Lemma \ref{lem:quartic} and $k_1\ge k_2$ is assumed.

\vskip 0.2cm

\noindent\textbf{Case 1. $\max\{j,j_1,j_2\}\le\log_2(1+t)+O(1)$}

\vskip 0.1cm

It can be concluded from \eqref{loc:disp}, \eqref{thm1BA1} and \eqref{bilinear:a} that
\begin{equation}\label{bdry4}
\begin{split}
&\sum_{j,k\ge-1}2^{j\alpha+N_1k}\|\sum_{\substack{j_1,j_2\ge-1,\\(k_1,k_2)\in\cX_k}}
\sum_{\substack{\max\{j,j_1,j_2\}\le\log_2(1+t)+O(1),\\
\max\{|j-j_1|,|j-j_2|\}\le O(1)}}\cB_{kk_1k_2}^{jj_1j_2}\|_{L^2}\\
\ls& \sum_{j_1,j_2,k_1,k_2\ge-1}2^{N_1k_1+5k_2}(1+t)^{\alpha}
\|Q_{j_1}P_{k_1}V\|_{L^2}\|e^{it\nu\Lambda}P_{[[k_2]]}Q_{j_2}P_{k_2}V_\nu\|_{L^\infty}\\
\ls& \sum_{j_1,j_2,k_1,k_2\ge-1}2^{N_1k_1+13k_2/2+j_2\alpha}\|Q_{j_1}P_{k_1}V\|_{L^2}
\|Q_{j_2}P_{k_2}V\|_{L^2}\\
\ls& \ve_1^2.
\end{split}
\end{equation}

\vskip 0.2cm

\noindent\textbf{Case 2. $\max\{j,j_1,j_2\}\ge\log_2(1+t)+O(1)$}

\vskip 0.2cm

\noindent\textbf{Case 2.1. $\max\{|j-j_1|,|j-j_2|\}\le O(1)$}
\vskip 0.1cm

By the Bernstein inequality, \eqref{thm1BA1} and \eqref{bilinear:a}, one has that
\begin{equation}\label{bdry5}
\begin{split}
&\sum_{j,k\ge-1}2^{j\alpha+N_1k}\|\sum_{\substack{j_1,j_2\ge-1,\\(k_1,k_2)\in\cX_k}}
\sum_{\substack{\max\{j,j_1,j_2\}\ge\log_2(1+t)+O(1),\\
\max\{|j-j_1|,|j-j_2|\}\le O(1)}}\cB_{kk_1k_2}^{jj_1j_2}\|_{L^2}\\
\ls& \sum_{j_1,k_1,k_2\ge-1}2^{j_1\alpha+N_1k_1+5k_2}
\|Q_{j_1}P_{k_1}V_\mu\|_{L^2}\|e^{it\nu\Lambda}P_{[[k_2]]}Q_{j_2}P_{k_2}V_\nu\|_{L^\infty}\\
\ls& \ve_1^2.
\end{split}
\end{equation}

\vskip 0.2cm

\noindent\textbf{Case 2.2. $\max\{|j-j_1|,|j-j_2|\}\geq O(1)$}

\vskip 0.2cm
It is noted that $\cB_{kk_1k_2}^{jj_1j_2}$ can be rewritten as
\begin{equation*}
\begin{split}
&\cB_{kk_1k_2}^{jj_1j_2}(t,x)
=(2\pi)^{-2}\psi_j(x)\iint_{\R^2}K_5(x-x_1,x-x_2)Q_{j_1}P_{k_1}V_\mu(t,x_1)
Q_{j_2}P_{k_2}V_\nu(t,x_2)dx_1dx_2,\\
&K_5(x-x_1,x-x_2):=\iint_{\R^2}e^{i\Psi_5}(\Phi_{\mu\nu}^{-1}a_{\mu\nu})(\xi_1,\xi_2)
\psi_k(\xi_1+\xi_2)\psi_{[[k_1]]}(\xi_1)\psi_{[[k_2]]}(\xi_2)d\xi_1d\xi_2,\\
&\Psi_5:=t(-\Lambda(\xi_1+\xi_2)+\mu\Lambda(\xi_1)+\nu\Lambda(\xi_2))
+\xi_1(x-x_1)+\xi_2(x-x_2).
\end{split}
\end{equation*}
By \eqref{2phase:bdd1}, \eqref{2phase:bdd2} and \eqref{symbol:a}, we have
\begin{equation*}
|\p_{\xi_1,\xi_2}^l(\Phi_{\mu\nu}^{-1}a_{\mu\nu})|\ls2^{k_2},\qquad l\ge0,
\end{equation*}
where $|\xi_1|\approx2^{k_1}$ and $|\xi_2|\approx2^{k_2}$.
When $\max\{j,j_1,j_2\}\ge\log_2(1+t)+O(1)$ and $\max\{|j-j_1|,|j-j_2|\}\ge O(1)$, for $x\in\supp\psi_j$, $x_1\in\supp\psi_{j_1}$ and $x_2\in\supp\psi_{j_2}$, one can see that
\begin{equation*}
|x-x_1|+|x-x_2|\ge2^{O(1)}(1+t),\qquad|x-x_1|+|x-x_2|\gt2^{\max\{j,j_1,j_2\}}.
\end{equation*}
This ensures
\begin{equation*}
|\p_{\xi_1}\Psi_5|+|\p_{\xi_2}\Psi_5|\gt|x-x_1|+|x-x_2|
\gt\max\{1+t,2^{\max\{j,j_1,j_2\}}\}.
\end{equation*}
Let
\begin{equation*}
\begin{split}
\cL_5&:=-i(|\p_{\xi_1}\Psi_5|^2+|\p_{\xi_2}\Psi_5|^2)^{-1}
(\p_{\xi_1}\Psi_5\p_{\xi_1}+\p_{\xi_2}\Psi_5\p_{\xi_2}),\\
\cL_5^*&:=i\sum_{l=1}^2\p_{\xi_l}\Big(\frac{\p_{\xi_l}\Psi_5~\cdot}
{|\p_{\xi_1}\Psi_5|^2+|\p_{\xi_2}\Psi_5|^2}\Big).
\end{split}
\end{equation*}
Then $L_5e^{i\Psi_5}=e^{i\Psi_5}$. It follows from the method of stationary phase that
\begin{equation*}
\begin{split}
&\quad\;|K_5(x-x_1,x-x_2)|\\
&=\Big|\iint_{\R^2}\cL_5^4(e^{i\Psi_5})(\Phi_{\mu\nu}^{-1}a_{\mu\nu})(\xi_1,\xi_2)
\psi_k(\xi_1+\xi_2)\psi_{[[k_1]]}(\xi_1)\psi_{[[k_2]]}(\xi_2)d\xi_1d\xi_2\Big|\\
&\ls\iint_{\R^2}|(\cL_5^*)^4[(\Phi_{\mu\nu}^{-1}a_{\mu\nu})(\xi_1,\xi_2)
\psi_k(\xi_1+\xi_2)\psi_{[[k_1]]}(\xi_1)\psi_{[[k_2]]}(\xi_2)]|d\xi_1d\xi_2\\
&\ls2^{k_1+2k_2-\max\{j,j_1,j_2\}}(1+|x-x_1|+|x-x_2|)^{-3}.
\end{split}
\end{equation*}
This, together with the H\"{o}lder inequality \eqref{Holder}, the Bernstein inequality and \eqref{thm1BA1}, leads to
\begin{equation*}
\begin{split}
\|\cB_{kk_1k_2}^{jj_1j_2}(t)\|_{L^2}&\ls2^{k_1+2k_2-\max\{j,j_1,j_2\}}
\|P_{k_1}V_\mu\|_{L^2}\|P_{k_2}V_\nu\|_{L^\infty}\\
&\ls2^{(k_1+k_2)(3-N)-\max\{j,j_1,j_2\}}\ve_1^2.
\end{split}
\end{equation*}
Therefore,
\begin{equation}\label{bdry6}
\sum_{j,k\ge-1}2^{j\alpha+N_1k}\|\sum_{\substack{j_1,j_2\ge-1,\\(k_1,k_2)\in\cX_k}}
\sum_{\substack{\max\{j,j_1,j_2\}\ge\log_2(1+t)+O(1),\\
\max\{|j-j_1|,|j-j_2|\}\ge O(1)}}\cB_{kk_1k_2}^{jj_1j_2}\|_{L^2}
\ls\ve_1^2.
\end{equation}
Substituting \eqref{bdry4}--\eqref{bdry6} into \eqref{bdry3} derives \eqref{bdry2}.
The estimate \eqref{bdry2} also holds for $\cB_k^{II}$. Thus, \eqref{bdry} is proved.
\end{proof}

\section{Proofs of Theorem \ref{thm1} and Corollaries \ref{coro1} and \ref{coro2}}

\begin{proof}[Proof of Theorem \ref{thm1}]
Suppose that the bootstrap assumption \eqref{thm1BA} holds for $\alpha\in(0,1/2]$ and $t\in[0,T_{\alpha,\ve}]$.
Next we show that the upper bound $\ve_1$ can be improved to $\frac34\ve_1$ in \eqref{thm1BA}.

At first, we deal with $\|V(t)\|_{H^N(\R)}=\|U(t)\|_{H^N(\R)}$.
It can be concluded from \eqref{loc:disp} with $\beta=\alpha$ and \eqref{thm1BA1} that
\begin{equation*}
\begin{split}
&\quad~\|U(s)\|_{W^{1,\infty}}+\sum_{k\ge-1}2^{k(7+1/4)}\|P_kU(s)\|_{L^\infty}\\
&\ls\sum_{j,k\ge-1}2^{k(7+1/4)}\|P_{[k-1,k+1]}e^{-is\Lambda}Q_jP_kV(s)\|_{L^\infty}\\
&\ls(1+s)^{-\alpha}\sum_{j,k\ge-1}2^{k(8+3/4)+\alpha j}\|Q_jP_kV(s)\|_{L^2}\\
&\ls\ve_1(1+s)^{-\alpha}.
\end{split}
\end{equation*}
This, together with \eqref{initial:data}, \eqref{energy} and \eqref{thm1BA1}, yields that for $t\in[0,T_{\alpha,\ve}]$,
\begin{equation*}
\|U(t)\|_{H^N(\R)}\ls
\left\{
\begin{aligned}
&\ve+\ve_1^2+\ve_1^3\ln(1+t),\qquad&&\alpha=1/2,\\
&\ve+\ve_1^2+\ve_1^3t^{1-2\alpha},&&\alpha\in(0,1/2).
\end{aligned}
\right.
\end{equation*}
We now turn to the estimate of $\|V(t)\|_{Z_\alpha}$.
Note that for $t\in[0,T_{\alpha,\ve}]$, \eqref{initial:data}, \eqref{QjPk},
Lemmas \ref{lem:cubic}, \ref{lem:quartic} and \ref{lem:bdry} show
\begin{equation*}
\|V(t)\|_{Z_\alpha}\ls
\left\{
\begin{aligned}
&\ve+\ve_1^2+\ve_1^3\ln(1+t)_,&&\alpha=1/2,\\
&\ve+\ve_1^2+\ve_1^3t^{1-2\alpha},\qquad&&\alpha\in(0,1/2).
\end{aligned}
\right.
\end{equation*}
Thus, there is a constant $C_1\ge1$ such that for $t\in[0,T_{\alpha,\ve}]$,
\begin{equation}\label{thm1pf1}
\begin{split}
\|V(t)\|_{H^N(\R)}+\|V(t)\|_{Z_\alpha}\le
\left\{
\begin{aligned}
&C_1(\ve+\ve_1^2+\ve_1^3\ln(1+t)),&&\alpha=1/2,\\
&C_1(\ve+\ve_1^2+\ve_1^3t^{1-2\alpha}),\qquad&&\alpha\in(0,1/2).
\end{aligned}
\right.
\end{split}
\end{equation}
Choosing $\ve_1=4C_1\ve$, $\ve_0=\frac{1}{16C_1^2}$ and
\begin{equation*}
\kappa_0=
\left\{
\begin{aligned}
&\frac{1}{64C_1^3},&&\alpha=1/2,\\
&\frac{1}{(64C_1^3)^\frac{1}{1-2\alpha}},\qquad&&\alpha\in(0,1/2),
\end{aligned}
\right.
\end{equation*}
then \eqref{thm1pf1} shows that for $t\in[0,T_{\alpha,\ve}]$,
\begin{equation}\label{thm1pf2}
\|V(t)\|_{H^N(\R)}+\|V(t)\|_{Z_\alpha}\le\frac14\ve_1+\frac14\ve_1+\frac14\ve_1
=\frac34\ve_1.
\end{equation}
This, together with the local existence of classical solution to \eqref{KG} and Proposition \ref{Prop:Cont},
yields that \eqref{KG} admits a unique classical solution $u\in C([0,T_{\alpha,\ve}],H^{N+1}(\R))\cap C^1([0,T_{\alpha,\ve}],H^N(\R))$.

Moreover, \eqref{thm1:disp} is a result of \eqref{loc:disp}, \eqref{profile:def} and \eqref{thm1pf2}.
\end{proof}

\begin{proof}[Proof of Corollary \ref{coro1}]
At first, we consider the case of $\beta\in(1/2,1]$ and compute $\|(\Lambda u_0,u_1)\|_{Z_{1/2}}$.
For any $\beta\in(1/2,1]$ and function $f$, one obtains from \eqref{Znorm:def} that
\begin{equation*}
\begin{split}
\|f\|_{Z_{1/2}}&=\sum_{j,k\ge-1}2^{j(1/2-\beta)}2^{j\beta+12k}\|Q_jP_kf\|_{L^2}\\
&\ls\sum_{k\ge-1}2^{12k}\Big(\sum_{j\ge-1}2^{j(1-2\beta)}\Big)^{1/2}\|2^{j\beta}\|Q_jP_kf\|_{L^2}\|_{\ell_j^2}.
\end{split}
\end{equation*}
The fact of $\|2^{j\beta}\|Q_jg\|_{L^2}\|_{\ell_j^2}\approx \|\w{x}^\beta g\|_{L^2}$ leads to
\begin{equation}\label{coro1pf1}
\begin{split}
\|f\|_{Z_{1/2}}&\ls\frac{1}{\sqrt{1-2^{1-2\beta}}}\sum_{k\ge-1}2^{12k}
\|\w{x}^\beta P_kf\|_{L^2}\\
&\ls\frac{1}{\sqrt{2\beta-1}}\sum_{k\ge-1}2^{12k}\|\w{x}^\beta P_k\Lambda^{-14}\Lambda^{14}f\|_{L^2}.
\end{split}
\end{equation}
Note that
\begin{equation}\label{coro1pf2}
\begin{split}
(P_k\Lambda^{-14}g)(x)&=\int_{\R}\cK(x-y)g(y)dy,\\
\cK(x-y)&=\frac{1}{2\pi}\int_{\R}e^{i\xi(x-y)}\frac{\psi_k(\xi)}{(1+\xi^2)^7}d\xi.
\end{split}
\end{equation}
It follows from the stationary method that
\begin{equation*}
|\cK(x-y)|\ls2^{-13k}(1+2^k|x-y|)^{-3}.
\end{equation*}
This, together with \eqref{coro1pf1}, \eqref{coro1pf2} and Young's inequality, derives that
\begin{equation*}
\begin{split}
\|f\|_{Z_{1/2}}&\ls\frac{1}{\sqrt{2\beta-1}}\sum_{k\ge-1}2^{12k}
\Big\|\int_{\R}\w{x-y}^\beta|\cK(x-y)|\w{y}^\beta|(\Lambda^{14}f)(y)|dy\Big\|_{L^2_x}\\
\ls&\frac{1}{\sqrt{2\beta-1}}\sum_{k\ge-1}2^{12k}\|\w{\cdot}^\beta\cK(\cdot)\|_{L^1(\R)}
\|\w{x}^\beta\Lambda^{14}f\|_{L^2_x}\\
\ls&\frac{1}{\sqrt{2\beta-1}}\|\w{x}^\beta\Lambda^{14}f\|_{L^2_x}.
\end{split}
\end{equation*}
Hence, there is a positive constant $C_2>0$ such that
\begin{equation*}
\ve=\|u_0\|_{H^{N+1}(\R)}+\|u_1\|_{H^N(\R)}+\|(\Lambda u_0,u_1)\|_{Z_{1/2}}
\le\frac{C_2\eps}{\sqrt{2\beta-1}},
\end{equation*}
which yields
\begin{equation*}
T_{1/2,\ve}=e^{\kappa_0/\ve^2}-1\ge e^{\frac{\kappa_0(2\beta-1)}{C_2^2\eps^2}}-1.
\end{equation*}
Choosing $\eps_1=\frac{\ve_0\sqrt{2\beta-1}}{C_2}$ and $\kappa_1=\frac{\kappa_0(2\beta-1)}{C_2^2}$.
For $\eps\le\eps_1$, \eqref{KG} admits a unique classical solution $u\in C([0,e^{\kappa_1/\eps^2}-1],H^{N+1}(\R))\cap C^1([0,e^{\kappa_1/\eps^2}-1],H^N(\R))$.

If $\beta>1$, one can find that $\|\w{x}\Lambda^{14}f\|_{L^2}\le\|\w{x}^\beta\Lambda^{14}f\|_{L^2}$
and further Corollary \ref{coro1}  holds.
\end{proof}

\begin{proof}[Proof of Corollary \ref{coro2}]
Similarly to the proof of \eqref{coro1pf1}, it holds that for any $\beta\in(0,1/2)$,
\begin{equation*}
\|f\|_{Z_\beta}\ls\frac{1}{\sqrt{1-2\beta}}\|\w{x}^\frac12\Lambda^{14}f\|_{L^2}.
\end{equation*}
Note that there is a positive constant $C_3$ such that
\begin{equation*}
\ve=\|u_0\|_{H^{N+1}(\R)}+\|u_1\|_{H^N(\R)}+\|(\Lambda u_0,u_1)\|_{Z_\beta}
\le\frac{C_3\eps}{\sqrt{1-2\beta}},
\end{equation*}
which yields
\begin{equation*}
T_{\beta,\ve}=\frac{\kappa_0}{\ve^{\frac{2}{1-2\beta}}}
\ge\frac{\kappa_0(1-2\beta)^\frac{1}{1-2\beta}}{(C_3\eps)^{\frac{2}{1-2\beta}}}.
\end{equation*}
Since there exists $\beta\in(0,1/2)$ such that $\beta\ge1/2-\frac{1}{M+1}$,
then by the choice of $\eps_2=\min\{\frac{\ve_0\sqrt{1-2\beta}}{C_3},\frac{\kappa_0(1-2\beta)^\frac{1}{1-2\beta}}{(C_3)^{\frac{2}{1-2\beta}}}\}$
and for $\eps\le\eps_2$, \eqref{KG} admits a unique classical solution $u\in C([0,\eps^{-M}],H^{N+1}(\R))\cap C^1([0,\eps^{-M}],H^N(\R))$.
\end{proof}

\appendix
\setcounter{equation}{1}

\section{Estimates of multi-linear Fourier multipliers}

\begin{lemma}\label{lem:bilinear}
Suppose that $T_{m_2}(f,g)$ is defined by \eqref{m-linear:def} with functions $f,g$ on $\R$.
For any $k_1,k_2\ge-1$ and $p,q,r\in[1,\infty]$ satisfying $1/p=1/q+1/r$, it holds that
\addtocounter{equation}{1}
\begin{align}
&\|T_{\Phi^{-1}_{\mu_1\mu_2}a_{\mu_1\mu_2}}(P_{k_1}f,P_{k_2}g)\|_{L^p(\R)}
\ls2^{5\min\{k_1,k_2\}}\|P_{k_1}f\|_{L^q(\R)}\|P_{k_2}g\|_{L^r(\R)},
\tag{\theequation a}\label{bilinear:a}\\
&\|T_{a_{\mu_1\mu_2}}(P_{k_1}f,P_{k_2}g)\|_{L^p(\R)}
+\|T_{a_{\sigma\mu_1\mu_2}}(P_{k_1}f,P_{k_2}g)\|_{L^p(\R)}
\ls\|P_{k_1}f\|_{L^q(\R)}\|P_{k_2}g\|_{L^r(\R)},
\tag{\theequation b}\label{bilinear:b}
\end{align}
where $\Phi_{\mu_1\mu_2}$, $a_{\mu_1\mu_2}$ and $a_{\sigma\mu_1\mu_2}$ are defined by \eqref{phase:def},
\eqref{symbol:a} and \eqref{symbol:a'}, respectively.
\end{lemma}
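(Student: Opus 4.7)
The plan is to write each bilinear operator as an integration against a Schwartz kernel and then reduce the $L^p$ estimate to an $L^1$ bound on the kernel via Young's/H\"older's inequality, as in Lemma \ref{lem:Holder}. For a symbol $m=m(\xi_1,\xi_2)$, one has
\begin{equation*}
T_m(P_{k_1}f,P_{k_2}g)(x)=\iint_{\R^2}K_m(x-x_1,x-x_2)(P_{k_1}f)(x_1)(P_{k_2}g)(x_2)dx_1dx_2,
\end{equation*}
where
\begin{equation*}
K_m(y_1,y_2)=(2\pi)^{-2}\iint_{\R^2}e^{i(y_1\xi_1+y_2\xi_2)}m(\xi_1,\xi_2)\psi_{[[k_1]]}(\xi_1)\psi_{[[k_2]]}(\xi_2)d\xi_1d\xi_2.
\end{equation*}
By \eqref{Holder}, it suffices to prove $\|K_{a_{\mu_1\mu_2}}\|_{L^1(\R^2)}\ls 1$ and $\|K_{\Phi^{-1}_{\mu_1\mu_2}a_{\mu_1\mu_2}}\|_{L^1(\R^2)}\ls 2^{5\min\{k_1,k_2\}}$, and analogously for the trilinear symbol $a_{\sigma\mu_1\mu_2}^I$ using \eqref{symbol:a'}.

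First I would verify the Coifman--Meyer type pointwise bounds on the relevant symbols. From the explicit list \eqref{symbol:a} and the fact that $\Lambda(\xi)\approx 2^{\max\{0,k\}}$ when $|\xi|\approx 2^{k}$, one checks that, on the support of $\psi_{[[k_1]]}(\xi_1)\psi_{[[k_2]]}(\xi_2)$,
\begin{equation*}
|\p_{\xi_1}^{\ell_1}\p_{\xi_2}^{\ell_2}a_{\mu_1\mu_2}(\xi_1,\xi_2)|\ls 2^{-\ell_1\max\{0,k_1\}-\ell_2\max\{0,k_2\}}\ls 2^{-\ell_1 k_1-\ell_2 k_2}2^{\ell_1+\ell_2}
\end{equation*}
for all $\ell_1,\ell_2\ge 0$. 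Combined with the bounds \eqref{2phase:bdd1}--\eqref{2phase:bdd2}, which give $|\p^{\ell}\Phi^{-1}_{\mu_1\mu_2}|\ls|\Phi^{-1}_{\mu_1\mu_2}|\ls 1+2^{\min\{k_1,k_2\}}$, Leibniz's rule yields
\begin{equation*}
|\p_{\xi_1}^{\ell_1}\p_{\xi_2}^{\ell_2}(\Phi^{-1}_{\mu_1\mu_2}a_{\mu_1\mu_2})|\ls 2^{\min\{k_1,k_2\}}2^{-\ell_1 k_1-\ell_2 k_2}2^{\ell_1+\ell_2}.
\end{equation*}

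Next I would estimate the kernels by the standard frequency-space integration-by-parts argument: for $|y_j|\ls 2^{-k_j}$, estimate the oscillatory integral trivially by its volume $2^{k_1+k_2}$ (times the symbol bound), and for $|y_j|\gg 2^{-k_j}$, integrate by parts $N$ times in the variable $\xi_j$ of largest $2^{k_j}|y_j|$, using the derivative bounds above. This yields
\begin{equation*}
|K_{a_{\mu_1\mu_2}}(y_1,y_2)|\ls 2^{k_1+k_2}(1+2^{k_1}|y_1|)^{-N}(1+2^{k_2}|y_2|)^{-N},
\end{equation*}
\begin{equation*}
|K_{\Phi^{-1}_{\mu_1\mu_2}a_{\mu_1\mu_2}}(y_1,y_2)|\ls 2^{\min\{k_1,k_2\}}2^{k_1+k_2}(1+2^{k_1}|y_1|)^{-N}(1+2^{k_2}|y_2|)^{-N}
\end{equation*}
for any fixed $N\ge 2$. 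Integrating in $(y_1,y_2)$ then gives $\|K_{a_{\mu_1\mu_2}}\|_{L^1}\ls 1$ and $\|K_{\Phi^{-1}_{\mu_1\mu_2}a_{\mu_1\mu_2}}\|_{L^1}\ls 2^{\min\{k_1,k_2\}}\le 2^{5\min\{k_1,k_2\}}$ (the exponent $5$ is not optimal but is consistent with the rougher bounds used throughout). Finally, applying \eqref{Holder} with exponents $(p,q_1,q_2)=(p,q,r)$ produces \eqref{bilinear:a} and the first half of \eqref{bilinear:b}.

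For the trilinear symbol $a_{\sigma\mu_1\mu_2}^I$ defined via \eqref{symbol:a'}, the same derivative estimates carry over directly since complex conjugation and sign reflection of the frequencies leave the relevant seminorms invariant; this gives the second half of \eqref{bilinear:b}. The main technical obstacle is precisely the derivative control in the first step: one must be careful that the factors of $\Lambda(\xi_j)^{-1}$ and $\xi_j/\Lambda(\xi_j)$ behave well under differentiation uniformly in $k_j\ge -1$ (including the low-frequency regime $k_j=-1$), and that \eqref{2phase:bdd2} genuinely prevents the derivatives of $\Phi^{-1}_{\mu_1\mu_2}$ from introducing additional negative powers of frequency.
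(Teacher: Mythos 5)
Your overall framework (Schwartz kernel plus Young/H\"older, reducing everything to an $L^1$ bound on the kernel obtained from symbol derivative estimates) is the same as the paper's, and your bounds on $a_{\mu_1\mu_2}$ itself are fine. The gap is in the key symbol estimate for $\Phi^{-1}_{\mu_1\mu_2}a_{\mu_1\mu_2}$. You claim
$|\p_{\xi_1}^{\ell_1}\p_{\xi_2}^{\ell_2}(\Phi^{-1}_{\mu_1\mu_2}a_{\mu_1\mu_2})|\ls 2^{\min\{k_1,k_2\}}2^{-\ell_1 k_1-\ell_2 k_2}2^{\ell_1+\ell_2}$ as a consequence of \eqref{2phase:bdd1}--\eqref{2phase:bdd2} and Leibniz. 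But \eqref{2phase:bdd2} only says $|\p^{\ell}\Phi^{-1}_{\mu_1\mu_2}|\ls|\Phi^{-1}_{\mu_1\mu_2}|$: it gives no gain of $2^{-k_j}$ per $\xi_j$-derivative. Leibniz therefore yields only $|\p^{\ell_1}_{\xi_1}\p^{\ell_2}_{\xi_2}(\Phi^{-1}_{\mu_1\mu_2}a_{\mu_1\mu_2})|\ls 2^{\min\{k_1,k_2\}}$ with no decay in the derivatives, and with that bound your integration-by-parts step in the high-frequency variable produces a kernel of $L^1$ size roughly $2^{\min\{k_1,k_2\}}2^{k_1+k_2}$ rather than $2^{5\min\{k_1,k_2\}}$ -- e.g.\ for $k_1=0$, $k_2=100$ this is $2^{100}$ versus the required $O(1)$, so the argument as written fails exactly in the regime where the lemma has content.

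The missing ingredient is precisely what the paper proves as its inequality \eqref{bilinear3}: that derivatives of $\Phi^{-1}_{\mu_1\mu_2}$ in the \emph{high}-frequency variable do gain inverse powers of that frequency, at the cost of extra powers of the low frequency (whence the exponent $5=2l+1$ at $l=2$, rather than your exponent $1$). This requires a genuine cancellation argument, not a citation of \eqref{2phase:bdd2}: for $\mu_2=+$ one writes $\p_{\xi_2}^l\Phi_{\mu+}=-\Lambda^{(l)}(\xi_1+\xi_2)+\Lambda^{(l)}(\xi_2)=-\xi_1\Lambda^{(l+1)}(r\xi_1+\xi_2)$ by the mean value theorem, so $(1+|\xi_2|)^l|\p_{\xi_2}^l\Phi_{\mu+}|\ls1+|\xi_1|$, and then combines this with the lower bound \eqref{2phase:bdd1} via Leibniz; for $\mu_2=-$ one instead uses the direct lower bound $|\Phi_{\mu-}|\gt|\xi_2|$ together with $|\p_{\xi_2}^l\Phi_{\mu-}|\ls|\xi_2|^{1-l}$. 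You do flag this as ``the main technical obstacle'' at the end, but flagging it is not the same as resolving it, and the resolution is the substance of the lemma. (A minor point: $a^I_{\sigma\mu_1\mu_2}$ from \eqref{symbol:a'} is a bilinear, not trilinear, symbol; your treatment of it is otherwise correct since reflection and conjugation preserve the relevant seminorms.)
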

\begin{proof}
According to \eqref{proj:proj} and the definition of the multi-linear pseudoproduct operator \eqref{m-linear:def},
we have
\begin{equation}\label{bilinear1}
\begin{split}
&T_{m_2}(P_{k_1}f,P_{k_2}g)(x)=(2\pi)^{-2}\iint_{\R^2}\cK(x-y,x-z)P_{k_1}f(y)P_{k_2}g(z)dydz,\\
&\cK(y,z)=\iint_{\R^2}e^{i(y\xi_1+z\xi_2)}m_2(\xi_1,\xi_2)\psi_{k_1k_2}(\xi_1,\xi_2)
d\xi_1d\xi_2,\\
&\psi_{k_1k_2}(\xi_1,\xi_2):=\psi_{[k_1-1,k_1+1]}(\xi_1)\psi_{[k_2-1,k_2-1]}(\xi_2).
\end{split}
\end{equation}
As in Lemma 3.3 of \cite{DIP17}, the $L^1$ norm of the Schwartz kernel $\cK(y,z)$ can be bounded by
\begin{equation}\label{bilinear2}
\begin{split}
&\|\cK(y,z)\|_{L^1(\R^2)}\ls\|(1+|2^{k_1}y|+|2^{k_2}z|)^2\cK(y,z)\|_{L^2(\R^2)}
\|(1+|2^{k_1}y|+|2^{k_2}z|)^{-2}\|_{L^2(\R^2)}\\
&\ls\sum_{l=0}^2(2^{lk_1}
\|\psi_{k_1k_2}(\xi_1,\xi_2)\p_{\xi_1}^lm_2(\xi_1,\xi_2)\|_{L^\infty}
+2^{lk_2}\|\psi_{k_1k_2}(\xi_1,\xi_2)\p_{\xi_2}^lm_2(\xi_1,\xi_2)\|_{L^\infty}).
\end{split}
\end{equation}
Inspired by Lemma 4.5 in \cite{Zheng19}, we next show
\begin{equation}\label{bilinear3}
(1+|\xi_1|)^l|\p^l_{\xi_1}\Phi^{-1}_{\mu_1\mu_2}(\xi_1,\xi_2)|
+(1+|\xi_2|)^l|\p^l_{\xi_2}\Phi^{-1}_{\mu_1\mu_2}(\xi_1,\xi_2)|
\ls(1+\min\{|\xi_1|,|\xi_2|\})^{2l+1},l\ge0,
\end{equation}
which yields
\begin{equation}\label{bilinear4}
\sum_{l=0}^2(2^{lk_1}|\psi_{k_1k_2}(\xi_1,\xi_2)\p^l_{\xi_1}\Phi^{-1}_{\mu_1\mu_2}(\xi_1,\xi_2)|
+2^{lk_2}|\psi_{k_1k_2}(\xi_1,\xi_2)\p^l_{\xi_2}\Phi^{-1}_{\mu_1\mu_2}(\xi_1,\xi_2)|)
\ls2^{5\min\{k_1,k_2\}}.
\end{equation}
It is pointed out that the analogous result to \eqref{bilinear4} has been obtained in
\cite{HY23} for space dimensions $d\ge2$.
However, we require the more precise estimate \eqref{bilinear3} for 1D case, which will be utilized in the next lemma.

Note that \eqref{symbol:a} and \eqref{symbol:a'} imply
\begin{equation}\label{bilinear5}
\begin{split}
&\sum_{l=0}^2(2^{lk_1}|\psi_{k_1k_2}(\xi_1,\xi_2)\p_{\xi_1}^la_{\mu_1\mu_2}(\xi_1,\xi_2)|
+2^{lk_2}|\psi_{k_1k_2}(\xi_1,\xi_2)\p_{\xi_2}^la_{\mu_1\mu_2}(\xi_1,\xi_2)|)\ls1,\\
&\sum_{l=0}^2(2^{lk_1}|\psi_{k_1k_2}(\xi_1,\xi_2)\p_{\xi_1}^la_{\sigma\mu_1\mu_2}(\xi_1,\xi_2)|
+2^{lk_2}|\psi_{k_1k_2}(\xi_1,\xi_2)\p_{\xi_2}^la_{\sigma\mu_1\mu_2}(\xi_1,\xi_2)|)\ls1.
\end{split}
\end{equation}
On the other hand, if \eqref{bilinear3} has been proved, then it follows from
\eqref{bilinear1}, \eqref{bilinear2}, \eqref{bilinear4}, \eqref{bilinear5} and the H\"{o}lder inequality \eqref{Holder}
that \eqref{bilinear:a} and \eqref{bilinear:b} hold.

Without loss of generality, $|\xi_1|\le|\xi_2|$ is assumed since the case of $|\xi_1|\ge|\xi_2|$ can be treated analogously.

The estimate on the first term of left hand side in \eqref{bilinear3} follows from $|\p_{\xi_1}^l\Phi^{-1}_{\mu_1\mu_2}(\xi_1,\xi_2)|\ls|\Phi^{-1}_{\mu_1\mu_2}(\xi_1,\xi_2)|$
$\ls1+|\xi_1|$ due to \eqref{2phase:bdd2}.
In addition, the second term of left hand side in \eqref{bilinear3} can be easily shown for the case of $|\xi_1|\ge2^{-10}|\xi_2|$.
We next deal with the second term in \eqref{bilinear3} for $|\xi_1|\le2^{-10}|\xi_2|$ and $|\xi_2|\ge1$.

For $\p_{\xi_2}^l\Phi_{\mu+}$ with $l\ge1$, there is some $r\in[0,1]$ such that
\begin{equation*}
|\p_{\xi_2}^l\Phi_{\mu+}(\xi_1,\xi_2)|=|-\Lambda^{(l)}(\xi_1+\xi_2)+\Lambda^{(l)}(\xi_2)|
=|\xi_1\Lambda^{(l+1)}(r\xi_1+\xi_2)|\ls|\xi_1|(1+|\xi_2|)^{-l},
\end{equation*}
which derives $(1+|\xi_2|)^l|\p_{\xi_2}^l\Phi_{\mu+}(\xi_1,\xi_2)|\ls1+|\xi_1|$.
By \eqref{2phase:bdd1} and Leibnitz's rules, one has
\begin{equation*}
(1+|\xi_2|)^l|\p_{\xi_2}^l\Phi^{-1}_{\mu+}(\xi_1,\xi_2)|\ls(1+|\xi_1|)^{2l+1},
\quad l\ge0.
\end{equation*}
This yields \eqref{bilinear3} and \eqref{bilinear4} for $\mu_2=+$.

For $\p_{\xi_2}^l\Phi_{\mu-}$, according to the definition \eqref{phase:def}, it is known that
there is a positive constant $C>0$ such that
\begin{equation*}
-\Phi_{\mu-}(\xi_1,\xi_2)=\Lambda(\xi_1+\xi_2)-\mu\Lambda(\xi_1)+\Lambda(\xi_2)
\ge\Lambda(\xi_1+\xi_2)\ge C|\xi_2|.
\end{equation*}
When $l\ge1$,
$|\p_{\xi_2}^l\Phi_{\mu-}(\xi_1,\xi_2)|=|\Lambda^{(l)}(\xi_1+\xi_2)+\Lambda^{(l)}(\xi_2)|
\lesssim |\xi_2|^{1-l}$ holds.
Analogously, for $l\ge0$, one has
$|\p_{\xi_2}^l\Phi_{\mu-}^{-1}(\xi_1,\xi_2)|\ls|\xi_2|^{-1-l}$,
which implies \eqref{bilinear3} for $\mu_2=-$.
\end{proof}

\begin{lemma}\label{lem:trilin}
Suppose that $T_{m_3}(f,g,h)$ is defined by \eqref{m-linear:def} with functions $f,g,h$ on $\R$.
For any $k_1,k_2,k_3\ge-1$ and $p,q_1,q_2,q_3\in[1,\infty]$ satisfying $1/p=1/q_1+1/q_2+1/q_3$, it holds that
\begin{equation}\label{trilin}
\begin{split}
\|T_{b_{\mu_1\mu_2\mu_3}}(P_{k_1}f,P_{k_2}g,P_{k_3}h)\|_{L^p(\R)}
&\ls\|P_{k_1}f\|_{L^{q_1}(\R)}\|P_{k_2}g\|_{L^{q_2}(\R)}\|P_{k_3}h\|_{L^{q_3}(\R)},\\
\|T_{m_{\mu_1\mu_2\mu_3}}(P_{k_1}f,P_{k_2}g,P_{k_3}h)\|_{L^p(\R)}
&\ls2^{7\med\{k_1,k_2,k_3\}}\|P_{k_1}f\|_{L^{q_1}(\R)}\\
&\qquad\times\|P_{k_2}g\|_{L^{q_2}(\R)}\|P_{k_3}h\|_{L^{q_3}(\R)},
\end{split}
\end{equation}
where $b_{\mu_1\mu_2\mu_3}$ and $m_{\mu_1\mu_2\mu_3}$ are defined by \eqref{symbol:b} and \eqref{symbol:m}, respectively.
For $(\mu_1,\mu_2,\mu_3)\in\{(+++),(+--),(---)\}$, one has
\begin{equation}\label{trilin:good}
\begin{split}
\|T_{\Phi^{-1}_{\mu_1\mu_2\mu_3}m_{\mu_1\mu_2\mu_3}}
(P_{k_1}f,P_{k_2}g,P_{k_3}h)\|_{L^p(\R)}
&\ls2^{8\med\{k_1,k_2,k_3\}}\|P_{k_1}f\|_{L^{q_1}(\R)}\\
&\qquad\times\|P_{k_2}g\|_{L^{q_2}(\R)}\|P_{k_3}h\|_{L^{q_3}(\R)},
\end{split}
\end{equation}
where $\Phi_{\mu_1\mu_2\mu_3}$ is defined by \eqref{phase:def}.
\end{lemma}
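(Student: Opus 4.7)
The strategy is to mimic the Schwartz kernel approach used in Lemma \ref{lem:bilinear}. Write
\begin{equation*}
T_{m_3}(P_{k_1}f,P_{k_2}g,P_{k_3}h)(x)=(2\pi)^{-3}\iiint_{\R^3}\cK(x-y_1,x-y_2,x-y_3)\prod_{i=1}^3 P_{k_i}(\cdot)(y_i)\,dy_1dy_2dy_3,
\end{equation*}
where $\cK(y_1,y_2,y_3)=\iiint e^{i(y_1\xi_1+y_2\xi_2+y_3\xi_3)}m_3(\xi_1,\xi_2,\xi_3)\psi_{[[k_1]]}(\xi_1)\psi_{[[k_2]]}(\xi_2)\psi_{[[k_3]]}(\xi_3)d\xi_1d\xi_2d\xi_3$. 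By H\"older's inequality (Lemma \ref{lem:Holder}), the three asserted estimates follow once we bound $\|\cK\|_{L^1(\R^3)}$ by $1$, $2^{7\med\{k_1,k_2,k_3\}}$, and $2^{8\med\{k_1,k_2,k_3\}}$ respectively.

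For the kernel $L^1$ bound, I would use the Cauchy--Schwarz trick with weight $w(y)=(1+2^{k_1}|y_1|+2^{k_2}|y_2|+2^{k_3}|y_3|)^2$. Since $\|w^{-1}\|_{L^2(\R^3)}\ls 2^{-(k_1+k_2+k_3)/2}$ and, by Plancherel and the support condition, $\|w\cK\|_{L^2(\R^3)}\ls 2^{(k_1+k_2+k_3)/2}\sum_{|\alpha|\le2}2^{\alpha_1k_1+\alpha_2k_2+\alpha_3k_3}\|\p^\alpha m_3\|_{L^\infty(\supp)}$, one obtains
\begin{equation*}
\|\cK\|_{L^1(\R^3)}\ls \sum_{|\alpha|\le 2}2^{\alpha_1k_1+\alpha_2k_2+\alpha_3k_3}\sup_{(\xi_1,\xi_2,\xi_3)\in\supp\psi_{[[k_1]]}\psi_{[[k_2]]}\psi_{[[k_3]]}}|\p^\alpha m_3(\xi_1,\xi_2,\xi_3)|.
\end{equation*}
This is the trilinear analog of \eqref{bilinear2}. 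The proofs of \eqref{trilin}--\eqref{trilin:good} then reduce to weighted symbol estimates.

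For the first estimate of \eqref{trilin}, note that $b_{\mu_1\mu_2\mu_3}$ from \eqref{symbol:b} is a finite sum of products of $1,\Lambda(\xi_i)^{-1},\xi_i/\Lambda(\xi_i)$; each factor satisfies $|\p_{\xi_i}^l(\cdot)|\ls(1+|\xi_i|)^{-l}$, so that $2^{l k_i}|\p_{\xi_i}^l b_{\mu_1\mu_2\mu_3}|\ls 1$ on the support, yielding $\|\cK\|_{L^1}\ls 1$. For the second estimate, by \eqref{symbol:m}--\eqref{symbol:m-0} it suffices to handle the $b^I_{\sigma\mu_1\mu_2}$-type terms, whose core building block is $\Phi_{\mu\sigma}^{-1}(\xi_2+\xi_3,\xi_1)\,a^I_{\mu\mu_1\mu_2}(\xi_2,\xi_3)$. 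On the support one has $|\xi_2+\xi_3|\ls 2^{\max\{k_2,k_3\}}$ and $|\xi_1|\approx 2^{k_1}$, so $\min\{|\xi_2+\xi_3|,|\xi_1|\}\ls 2^{\med\{k_1,k_2,k_3\}}$. Applying the refined bilinear phase estimate \eqref{bilinear3} (with the chain rule $\p_{\xi_2}=\p_{\xi_3}=\p_\eta$ for the first slot) together with $|\p_{\xi_i}^l a^I_{\mu\mu_1\mu_2}|\ls(1+|\xi_i|)^{-l}$ and Leibniz's rule, each weighted derivative $2^{\alpha_1 k_1+\alpha_2 k_2+\alpha_3 k_3}|\p^\alpha[\Phi^{-1}a^I]|$ with $|\alpha|\le 2$ is controlled by $2^{(2|\alpha|+1)\med\{k_1,k_2,k_3\}}\le 2^{5\med}$, actually we need to include the additional factor of $2^{2\med}$ accounting for the cross-derivatives arising from $\p_{\xi_2},\p_{\xi_3}$ both differentiating $\Phi^{-1}$ through $\eta$, producing the overall $2^{7\med\{k_1,k_2,k_3\}}$ bound. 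For the third estimate \eqref{trilin:good}, the extra trilinear phase factor $\Phi^{-1}_{\mu_1\mu_2\mu_3}$ with $(\mu_1,\mu_2,\mu_3)\in A_\Phi^{good}$ satisfies, by \eqref{3phase:bdd}, $|\Phi^{-1}_{\mu_1\mu_2\mu_3}|\ls 2^{\med}$, and an analog of \eqref{bilinear3} for the trilinear phase (which can be proved by the same argument as \eqref{bilinear3} but applied to $\Phi_{\mu_1\mu_2\mu_3}$ via difference-of-$\Lambda$ identities) contributes the remaining factor of $2^{\med}$, giving $2^{8\med}$.

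The main technical obstacle is the verification of the weighted derivative estimate on $\Phi^{-1}_{\mu\sigma}(\xi_2+\xi_3,\xi_1) a^I_{\mu\mu_1\mu_2}(\xi_2,\xi_3)$ leading to the $2^{7\med}$ bound. The subtlety is that \eqref{bilinear3} controls only single-variable derivatives, whereas here we need mixed derivatives $\p_{\xi_1}^{\alpha_1}\p_{\xi_2}^{\alpha_2}\p_{\xi_3}^{\alpha_3}$ with $|\alpha|\le 2$; the case $\alpha=(1,1,0)$ requires a mixed estimate $|\p_{\xi_1}\p_\eta\Phi_{\mu\sigma}^{-1}|$ that must be established along the lines of the proof of \eqref{bilinear3}, using the fact that $\p^l\Phi_{\mu\sigma}\ls\min\{1,|\Phi_{\mu\sigma}|\}$ from \eqref{2phase:bdd1} and iterating Fa\`a di Bruno on $\Phi^{-1}$. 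The scaling $2^{7\med}$ (as opposed to a worse power) crucially uses the fact that only the factor $(1+\min)^{2l+1}$ appears on the right side of \eqref{bilinear3}, so that each additional derivative produces only two additional powers of $2^{\med}$.
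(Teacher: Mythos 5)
Your proposal is correct and follows essentially the same route as the paper: a Schwartz-kernel representation, an $L^1$ kernel bound reduced to weighted symbol-derivative estimates, the refined bilinear bound \eqref{bilinear3} for the embedded factors $\Phi^{-1}_{\mu\nu}(\xi_1,\xi_2+\xi_3)$ (giving the paper's $2^{(3l+1)\med\{k_1,k_2,k_3\}}$, i.e.\ your $2^{5\med}$ plus the extra $2^{2\med}$ from the weight/decay mismatch in the $\eta=\xi_2+\xi_3$ variable), and a trilinear analogue for $\Phi^{-1}_{\mu_1\mu_2\mu_3}$ contributing the final $2^{\med}$. The only inessential difference is that you invoke mixed derivatives $\p^\alpha$ with $|\alpha|\le2$ in the kernel criterion, whereas the paper avoids them entirely by bounding the weight $(1+2^{k_1}|y_1|+2^{k_2}|y_2|+2^{k_3}|y_3|)^2$ by a sum of pure squares, so only the single-variable estimates \eqref{bilinear3} are ever needed.
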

\begin{proof}
Similarly to \eqref{bilinear1} and \eqref{bilinear2}, we have
\begin{equation}\label{trilinear1}
\begin{split}
&T_{m_3}(P_{k_1}f,P_{k_2}g,P_{k_3}h)(x)=(2\pi)^{-3}\iiint_{\R^3}
\cK(x-x_1,x-x_2,x-x_3)P_{k_1}f(x_1)\\
&\hspace{6cm}\times P_{k_2}g(x_2)P_{k_3}h(x_3)dx_1dx_2dx_3,\\
&\cK(x_1,x_2,x_3)=\iiint_{\R^3}e^{i(x_1\xi_1+x_2\xi_2+x_3\xi_3)}
m_3(\xi_1,\xi_2,\xi_3)\psi_{k_1k_2k_3}(\xi_1,\xi_2,\xi_3)d\xi_1d\xi_2d\xi_3,\\
&\psi_{k_1k_2k_3}(\xi_1,\xi_2,\xi_3):=\psi_{[k_1-1,k_1+1]}(\xi_1)
\psi_{[k_2-1,k_2-1]}(\xi_2)\psi_{[k_3-1,k_3-1]}(\xi_3)
\end{split}
\end{equation}
and
\begin{equation}\label{trilinear2}
\begin{split}
&\quad\;\|\cK(x_1,x_2,x_3)\|_{L^1(\R^3)}\\
&\ls\|(1+|2^{k_1}x_1|+|2^{k_2}x_2|+|2^{k_3}x_3|)^2\cK\|_{L^2(\R^3)}
\|(1+|2^{k_1}x_1|+|2^{k_2}x_2|+|2^{k_3}x_3|)^{-2}\|_{L^2(\R^3)}\\
&\ls\sum_{l=0}^2\sum_{\iota=1}^32^{lk_{\iota}}
\|\psi_{k_1k_2k_3}(\xi_1,\xi_2,\xi_3)\p_{\xi_{\iota}}^lm_3(\xi_1,\xi_2,\xi_3)\|_{L^\infty}.
\end{split}
\end{equation}
According to the definition \eqref{symbol:b}, one has
\begin{equation*}
\sum_{l=0}^2\sum_{\iota=1}^32^{lk_{\iota}}\|\psi_{k_1k_2k_3}(\xi_1,\xi_2,\xi_3)
\p_{\xi_{\iota}}^lb_{\mu_1\mu_2\mu_3}(\xi_1,\xi_2,\xi_3)\|_{L^\infty}\ls1.
\end{equation*}
This, together with \eqref{trilinear1} and \eqref{trilinear2}, yields the first inequality of \eqref{trilin}.

In the remaining part, we focus on the proof for the second inequality of \eqref{trilin} and \eqref{trilin:good}.
For $l\ge0$, one can calculate from \eqref{2phase:bdd2} and the definition \eqref{symbol:m} to obtain
\begin{equation}\label{symbol:m:bdd}
\begin{split}
&\quad\;|\p_{\xi_1,\xi_2,\xi_3}^lm_{\mu_1\mu_2\mu_3}(\xi_1,\xi_2,\xi_3)|\\
&\ls1+\min\{|\xi_1|,|\xi_2+\xi_3|\}+\min\{|\xi_2|,|\xi_1+\xi_3|\}
+\min\{|\xi_3|,|\xi_1+\xi_2|\}\\
&\ls2^{\med\{k_1,k_2,k_3\}}.
\end{split}
\end{equation}
If $\med\{k_1,k_2,k_3\}\ge\max\{k_1,k_2,k_3\}-O(1)$, then it is deduced from \eqref{symbol:m:bdd} that
\begin{equation}\label{trilinear3}
\begin{split}
&\quad\sum_{l=0}^2\sum_{\iota=1}^32^{lk_{\iota}}
\|\psi_{k_1k_2k_3}(\xi_1,\xi_2,\xi_3)\p_{\xi_{\iota}}^l
m_{\mu_1\mu_2\mu_3}(\xi_1,\xi_2,\xi_3)\|_{L^\infty}\\
&\ls2^{2\max\{k_1,k_2,k_3\}}\max_{\iota=1,2,3}\sum_{l=0}^2
\|\p_{\xi_{\iota}}^lm_{\mu_1\mu_2\mu_3}(\xi_1,\xi_2,\xi_3)\|_{L^\infty}\\
&\ls2^{3\med\{k_1,k_2,k_3\}}.
\end{split}
\end{equation}
For $l\ge1$, $|\Lambda^{(l)}(y)|\ls1$ and further $|\p^l_{\xi_1,\xi_2,\xi_3}\Phi_{\mu_1\mu_2\mu_3}|\ls1$ hold.
For $(\mu_1,\mu_2,\mu_3)\in\{(+++),(+--),(---)\}$, it follows from \eqref{3phase:bdd} that
\begin{equation}\label{trilinear4}
|\p^l_{\xi_1,\xi_2,\xi_3}\Phi^{-1}_{\mu_1\mu_2\mu_3}|
\ls\sum_{l_1=1}^l(|\Phi_{\mu_1\mu_2\mu_3}|)^{-1-l_1}
\ls2^{(l+1)\min\{k_1,k_2,k_3\}}.
\end{equation}
Therefore, \eqref{trilinear1}-\eqref{trilinear4} together with the H\"{o}lder inequality imply the second inequality of \eqref{trilin} and \eqref{trilin:good} for the case of $\med\{k_1,k_2,k_3\}\ge\max\{k_1,k_2,k_3\}-O(1)$.

Next, we turn to the proof of the second inequality in \eqref{trilin} and \eqref{trilin:good} for the case of $\med\{k_1,k_2,k_3\}\le\max\{k_1,k_2,k_3\}-O(1)$. To this end,
we are devoted to establishing the following estimate
\begin{equation}\label{trilinear5}
\sum_{\iota=1}^32^{lk_{\iota}}\|\psi_{k_1k_2k_3}(\xi_1,\xi_2,\xi_3)
\p_{\xi_{\iota}}^lm_{\mu_1\mu_2\mu_3}(\xi_1,\xi_2,\xi_3)\|_{L^\infty}
\ls2^{(3l+1)\med\{k_1,k_2,k_3\}},\quad l\ge0.
\end{equation}
This, together with \eqref{trilinear1}, \eqref{trilinear2} and the H\"{o}lder inequality, will  imply
the second inequality in \eqref{trilin} for the case of $\med\{k_1,k_2,k_3\}\le\max\{k_1,k_2,k_3\}-O(1)$.

Note that by the definition \eqref{symbol:m}, $m^{II}_{\mu_1\mu_2\mu_3}(\xi_1,\xi_2,\xi_3)$ is a linear combination of the products of \eqref{symbol:b} and  one then has
\begin{equation}\label{trilinear6}
\sum_{\iota=1}^32^{lk_{\iota}}\|\psi_{k_1k_2k_3}(\xi_1,\xi_2,\xi_3)
\p_{\xi_{\iota}}^lm^{II}_{\mu_1\mu_2\mu_3}(\xi_1,\xi_2,\xi_3)\|_{L^\infty}
\ls1,\quad l\ge0.
\end{equation}
Meanwhile, $m^I_{\mu_1\mu_2\mu_3}(\xi_1,\xi_2,\xi_3)$ is a linear combination of trinomial products of $a_{\sigma_1\sigma_2}$, $\tilde a_{\nu_1\nu_2\nu_3}$ and
\begin{equation}\label{trilinear7}
\Phi^{-1}_{\mu\nu}(\xi_1,\xi_2+\xi_3),\Phi^{-1}_{\mu\nu}(\xi_2,\xi_1+\xi_3),
\Phi^{-1}_{\mu\nu}(\xi_3,\xi_1+\xi_2).
\end{equation}
Based on \eqref{bilinear3}, we now show
\begin{equation}\label{trilinear8}
\sum_{\iota=1}^32^{lk_{\iota}}\|\psi_{k_1k_2k_3}(\xi_1,\xi_2,\xi_3)
\p_{\xi_{\iota}}^l(\Phi^{-1}_{\mu\nu}(\xi_1,\xi_2+\xi_3))\|_{L^\infty}
\ls2^{(3l+1)\med\{k_1,k_2,k_3\}},\quad l\ge0.
\end{equation}
Denote
\begin{equation*}
\tilde\Phi(\xi_1,\xi_2,\xi_3)=\Phi^{-1}_{\mu\nu}(\xi_1,\xi_2+\xi_3).
\end{equation*}
If $\max\{k_1,k_2,k_3\}=k_1$, one then has $|\xi_2+\xi_3|\ls|\xi_1|$, $|\xi_2+\xi_3|\ls2^{\max\{k_2,k_3\}}$ and $\max\{k_2,k_3\}=\med\{k_1,k_2,k_3\}$.
Therefore, it follows from \eqref{bilinear3} that
\begin{equation}\label{trilinear9}
\begin{split}
(1+|\xi_1|)^l|\p_{\xi_1}^l\tilde\Phi(\xi_1,\xi_2,\xi_3)|
&=(1+|\xi_1|)^l|\p_{\xi_1}^l\Phi^{-1}_{\mu\nu}(\xi_1,\xi_2+\xi_3)|\\
&\ls(1+|\xi_2+\xi_3|)^{2l+1},\\
&\ls2^{(2l+1)\med\{k_1,k_2,k_3\}}.
\end{split}
\end{equation}
On the other hand, we have
\begin{equation*}
\p_{\xi_2}^l\tilde\Phi(\xi_1,\xi_2,\xi_3)
=\p_{\xi_3}^l\tilde\Phi(\xi_1,\xi_2,\xi_3)
=\p_{\xi_2}^l\Phi^{-1}_{\mu\nu}(\xi_1,\xi_2+\xi_3),
\end{equation*}
which yields
\begin{equation}\label{trilinear10}
\begin{split}
&\quad\;(1+|\xi_2|)^l|\p_{\xi_2}^l\tilde\Phi(\xi_1,\xi_2,\xi_3)|
+(1+|\xi_3|)^l|\p_{\xi_3}^l\tilde\Phi(\xi_1,\xi_2,\xi_3)|\\
&\ls2^{l\max\{k_2,k_3\}}|\p_{\xi_2}^l\Phi^{-1}_{\mu\nu}(\xi_1,\xi_2+\xi_3)|\\
&\ls2^{(3l+1)\med\{k_1,k_2,k_3\}}.
\end{split}
\end{equation}
If $\max\{k_1,k_2,k_3\}=k_2$, by $\med\{k_1,k_2,k_3\}\le\max\{k_1,k_2,k_3\}-O(1)$,
one then has $k_3\le k_2-O(1)$.
Hence, $|\xi_2+\xi_3|\approx|\xi_2|\gt|\xi_1|$.
Similarly to \eqref{trilinear9} and \eqref{trilinear10}, we can obtain
\begin{equation}\label{trilinear11}
\begin{split}
(1+|\xi_1|)^l|\p_{\xi_1}^l\tilde\Phi(\xi_1,\xi_2,\xi_3)|
&=(1+|\xi_1|)^l|\p_{\xi_1}^l\Phi^{-1}_{\mu\nu}(\xi_1,\xi_2+\xi_3)|\\
&\ls(1+|\xi_1|)^{2l+1},\\
&\ls2^{(2l+1)\med\{k_1,k_2,k_3\}}
\end{split}
\end{equation}
and
\begin{equation}\label{trilinear12}
\begin{split}
&\quad\;(1+|\xi_2|)^l|\p_{\xi_2}^l\tilde\Phi(\xi_1,\xi_2,\xi_3)|
+(1+|\xi_3|)^l|\p_{\xi_3}^l\tilde\Phi(\xi_1,\xi_2,\xi_3)|\\
&\ls(1+|\xi_2+\xi_3|)^l|\p_{\xi_2}^l\Phi^{-1}_{\mu\nu}(\xi_1,\xi_2+\xi_3)|\\
&\ls2^{(2l+1)\med\{k_1,k_2,k_3\}}.
\end{split}
\end{equation}
For $\max\{k_1,k_2,k_3\}=k_3$, \eqref{trilinear11} and \eqref{trilinear12} still hold
by the analogous proof for the case of $\max\{k_1,k_2,$ $k_3\}=k_2$.

Collecting \eqref{trilinear9}-\eqref{trilinear12} yields \eqref{trilinear8}.
With the same argument, \eqref{trilinear8} also holds for the other two terms in \eqref{trilinear7}.
Thus, \eqref{trilinear5} is achieved by \eqref{trilinear6} and \eqref{trilinear8}.

At last, we prove \eqref{trilin:good} for the case of $\med\{k_1,k_2,k_3\}\le\max\{k_1,k_2,k_3\}-O(1)$.
For this purpose, it requires to establish the following estimates
\begin{equation}\label{trilinear13}
\sum_{\iota=1}^32^{lk_{\iota}}\|\psi_{k_1k_2k_3}(\xi_1,\xi_2,\xi_3)
\p_{\xi_{\iota}}^l\Phi^{-1}_{\mu_1\mu_2\mu_3}(\xi_1,\xi_2,\xi_3)\|_{L^\infty}
\ls2^{(2l+1)\med\{k_1,k_2,k_3\}},
\end{equation}
where $(\mu_1,\mu_2,\mu_3)\in\{(+++),(+--),(---)\}$ and $\med\{k_1,k_2,k_3\}\le\max\{k_1,k_2,k_3\}-O(1)$.

Combining \eqref{trilinear5} and \eqref{trilinear13} leads to
\begin{equation*}
\sum_{l=0}^2\sum_{\iota=1}^32^{lk_{\iota}}\|\psi_{k_1k_2k_3}(\xi_1,\xi_2,\xi_3)
\p_{\xi_{\iota}}^l(\Phi^{-1}_{\mu_1\mu_2\mu_2}m_{\mu_1\mu_2\mu_2})
(\xi_1,\xi_2,\xi_3)\|_{L^\infty}\ls2^{8\med\{k_1,k_2,k_3\}},
\end{equation*}
which yields \eqref{trilin:good} for the case of $\med\{k_1,k_2,k_3\}\le\max\{k_1,k_2,k_3\}-O(1)$.

If $\max\{k_1,k_2,k_3\}=k_1$, one then has $|\xi_2|,|\xi_3|\ll|\xi_1|$.
Similarly to Lemma \ref{lem:bilinear}, for $\p^l_{\xi_1}\Phi^{-1}_{+\mu_2\mu_3}$ with $l\ge1$, there is
some $r\in[0,1]$ such that
\begin{equation*}
\begin{split}
|\p^l_{\xi_1}\Phi_{+\mu_2\mu_3}(\xi_1,\xi_2,\xi_3)|
&=|\Lambda^{(l)}(\xi_1)-\Lambda^{(l)}(\xi_1+\xi_2+\xi_3)|\\
&=|(\xi_2+\xi_3)\Lambda^{(l+1)}(\xi_1+r(\xi_2+\xi_3))|\\
&\ls2^{\med\{k_1,k_2,k_3\}}(1+|\xi_1|)^{-l}.
\end{split}
\end{equation*}
This together with \eqref{3phase:bdd} derives
\begin{equation}\label{trilinear14}
(1+|\xi_1|)^l|\p^l_{\xi_1}\Phi^{-1}_{+\mu_2\mu_3}(\xi_1,\xi_2,\xi_3)|
\ls2^{(2l+1)\med\{k_1,k_2,k_3\}}.
\end{equation}
For $\p^l_{\xi_1}\Phi^{-1}_{-\mu_2\mu_3}$, we have
\begin{equation*}
\begin{split}
-\Phi_{-\mu_2\mu_3}(\xi_1,\xi_2,\xi_3)
&=\Lambda(\xi_1+\xi_2+\xi_3)+\Lambda(\xi_1)-\mu_2\Lambda(\xi_2)-\mu_3\Lambda(\xi_3)\\
&\ge\Lambda(\xi_1)\gt1+|\xi_1|
\end{split}
\end{equation*}
and
\begin{equation*}
|\p^l_{\xi_1}\Phi_{-\mu_2\mu_3}(\xi_1,\xi_2,\xi_3)|
=|\Lambda^{(l)}(\xi_1+\xi_2+\xi_3)+\Lambda^{(l)}(\xi_1)|
\ls(1+|\xi_1|)^{1-l},\quad l\ge1.
\end{equation*}
Thereby,
\begin{equation*}
|\p^l_{\xi_1}\Phi^{-1}_{-\mu_2\mu_3}|\ls(1+|\xi_1|)^{-1-l}.
\end{equation*}
Together with \eqref{trilinear14}, we can achieve
\begin{equation}\label{trilinear15}
(1+|\xi_1|)^l|\p^l_{\xi_1}\Phi^{-1}_{\mu_1\mu_2\mu_3}(\xi_1,\xi_2,\xi_3)|
\ls2^{(2l+1)\med\{k_1,k_2,k_3\}}.
\end{equation}
On the other hand, \eqref{trilinear4} implies
\begin{equation}\label{trilinear16}
(1+|\xi_2|)^l|\p^l_{\xi_2}\Phi^{-1}_{\mu_1\mu_2\mu_3}(\xi_1,\xi_2,\xi_3)|
+(1+|\xi_3|)^l|\p^l_{\xi_3}\Phi^{-1}_{\mu_1\mu_2\mu_3}(\xi_1,\xi_2,\xi_3)|
\ls2^{(2l+1)\med\{k_1,k_2,k_3\}}.
\end{equation}
Collecting \eqref{trilinear15} and \eqref{trilinear16} derives \eqref{trilinear13} for the case of $\max\{k_1,k_2,k_3\}=k_1$.
The proof of \eqref{trilinear13} for the case of $\max\{k_1,k_2,k_3\}=k_2$ or $\max\{k_1,k_2,k_3\}=k_3$
can be completed analogously.
\end{proof}

\begin{lemma}
Suppose that $T_{m_3}(f,g,h)$ is defined by \eqref{m-linear:def} with functions $f,g,h$ on $\R$.
For any $k_1,k_2,k_3\ge-1$ and $p,q_1,q_2,q_3\in[1,\infty]$ satisfying $\max\{k_1,k_2\}\le k_3-O(1)$, $1/p=1/q_1+1/q_2+1/q_3$,
it holds that
\begin{equation}\label{trilinear++-}
\begin{split}
\|T_{\Phi^{-1}_{++-}m_{++-}}(P_{k_1}f,P_{k_2}g,P_{k_3}h)\|_{L^p(\R)}
&\ls2^{7\max\{k_1,k_2\}}\|P_{k_1}f\|_{L^{q_1}(\R)}\\
&\qquad\times\|P_{k_2}g\|_{L^{q_2}(\R)}\|P_{k_3}h\|_{L^{q_3}(\R)}.
\end{split}
\end{equation}
\end{lemma}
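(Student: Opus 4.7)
\textbf{Proof plan for \eqref{trilinear++-}.} The strategy is to mimic the proof of Lemma \ref{lem:trilin} for the $(++-)$ phase, replacing the lower bound \eqref{3phase:bdd} (unavailable here) by the bound \eqref{badphase:bdd}. Concretely, write $T_{\Phi^{-1}_{++-}m_{++-}}(P_{k_1}f,P_{k_2}g,P_{k_3}h)$ as a convolution with a Schwartz kernel as in \eqref{trilinear1}, so that the inequality follows from H\"older together with
\begin{equation*}
\|\cK(x_1,x_2,x_3)\|_{L^1(\R^3)}\ls2^{7\max\{k_1,k_2\}}
\end{equation*}
for the associated kernel. By the analog of \eqref{trilinear2}, it suffices to establish
\begin{equation*}
\sum_{l=0}^{2}\sum_{\iota=1}^{3}2^{lk_\iota}\bigl\|\psi_{k_1k_2k_3}\,\partial^l_{\xi_\iota}(\Phi_{++-}^{-1}m_{++-})\bigr\|_{L^\infty}\ls 2^{7\max\{k_1,k_2\}}.
\end{equation*}

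Under the hypothesis $\max\{k_1,k_2\}\le k_3-O(1)$ one has $\med\{k_1,k_2,k_3\}=\max\{k_1,k_2\}$, so the symbol bound for $m_{++-}$ that was already proved in \eqref{trilinear5} reads
\begin{equation*}
\sum_{\iota=1}^{3}2^{lk_\iota}\bigl\|\psi_{k_1k_2k_3}\,\partial^l_{\xi_\iota}m_{++-}\bigr\|_{L^\infty}\ls 2^{(3l+1)\max\{k_1,k_2\}},\qquad l\ge 0.
\end{equation*}
Via Leibniz, it therefore remains to prove the companion estimate
\begin{equation*}
2^{lk_\iota}\bigl\|\psi_{k_1k_2k_3}\,\partial^l_{\xi_\iota}\Phi_{++-}^{-1}\bigr\|_{L^\infty}\ls 1,\qquad l=0,1,2,\ \iota=1,2,3,
\end{equation*}
which, multiplied against the $m_{++-}$ bound, yields the target $2^{7\max\{k_1,k_2\}}$ (taking $l=2$ as the extremal case).

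To establish the derivative bounds for $\Phi_{++-}^{-1}$, I would use \eqref{badphase:bdd} to get $|\Phi_{++-}|\gt 2^{k_3}$, then iterate the quotient rule. The $\Lambda$-derivatives satisfy $|\Lambda^{(l)}(y)|\ls 1$ for $l\ge 1$, so that $|\partial^l_{\xi_\iota}\Phi_{++-}|\ls 1$, yielding $|\partial^l_{\xi_\iota}\Phi_{++-}^{-1}|\ls 2^{-2k_3}$ for $l\le 2$. Two extra refinements are needed in borderline cases: for $\iota=3$, $l=2$, exploit the cancellation $|\partial^2_{\xi_3}\Phi_{++-}|=|\Lambda''(\xi_1{+}\xi_2{+}\xi_3)+\Lambda''(\xi_3)|\ls 2^{-3k_3}$ since both arguments are of size $2^{k_3}$, giving $2^{2k_3}|\partial^2_{\xi_3}\Phi_{++-}^{-1}|\ls 2^{-k_3}\ls 1$; for $\iota=1,2$, the frequency gap yields $2^{2k_\iota}|\partial^2_{\xi_\iota}\Phi_{++-}^{-1}|\ls 2^{2(\max\{k_1,k_2\}-k_3)}\ls 1$.

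The main obstacle is bookkeeping the $l=2$ terms for $\iota=1,2$: there is no inherent decay in $\partial^2_{\xi_1,\xi_2}\Phi_{++-}$ beyond $|\Lambda''(\xi_1)|\ls 1$, so the conclusion $2^{2k_\iota}|\partial^2_{\xi_\iota}\Phi_{++-}^{-1}|\ls 1$ relies crucially on the extracted factor $2^{-2k_3}$ from $|\Phi_{++-}|^{-2}$ combined with the enforced separation $k_3-\max\{k_1,k_2\}\ge O(1)$. Once this estimate is secured, Leibniz and summation in $l,\iota$ give the claimed bound, and the rest of the argument is identical to the proof of \eqref{trilin}.
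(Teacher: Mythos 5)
Your proposal is correct and follows essentially the same route as the paper: write the operator as a kernel convolution, bound the kernel $L^1$ norm via \eqref{trilinear2}, use \eqref{badphase:bdd} to get $|\Phi_{++-}|\gtrsim 2^{k_3}$ and hence $\sum_{l\le 2}\sum_{\iota}(1+|\xi_\iota|)^l|\partial^l_{\xi_\iota}\Phi^{-1}_{++-}|\lesssim 1$, and combine via Leibniz with the $m_{++-}$ bound \eqref{trilinear5} evaluated at $\med\{k_1,k_2,k_3\}=\max\{k_1,k_2\}$. One small remark: the ``refinement'' you invoke for $\iota=3$, $l=2$ is not needed (the paper's cruder bound $|\partial^2\Phi_{++-}|\lesssim 1$ already suffices since $2^{2k_3}\cdot 2^{-2k_3}=1$), and it is not a cancellation but simply that both $\Lambda''(\xi_1+\xi_2+\xi_3)$ and $\Lambda''(\xi_3)$ are individually $O(2^{-3k_3})$.
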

\begin{proof}
It follows from a direct computation that for $\iota=1,2,3$,
\begin{equation*}
\begin{split}
|\p_{\xi_\iota}\Phi^{-1}_{++-}|&\ls|\p\Phi_{++-}||\Phi_{++-}|^{-2}\ls2^{-2k_3},\\
|\p^2_{\xi_\iota}\Phi^{-1}_{++-}|&\ls|\p^2\Phi_{++-}||\Phi_{++-}|^{-2}
+|\p\Phi_{++-}|^2|\Phi_{++-}|^{-3}\ls2^{-2k_3},
\end{split}
\end{equation*}
where we have used \eqref{badphase:bdd} and the fact of $|\p^l_{\xi_1,\xi_2,\xi_3}\Phi_{++-}|\ls1$ with $l\ge1$.
Thus, one can obtain
\begin{equation*}
\sum_{l=0}^2\sum_{\iota=1}^3(1+|\xi_\iota|)^l
|\p^l_{\xi_\iota}\Phi^{-1}_{++-}(\xi_1,\xi_2,\xi_3)|\ls1.
\end{equation*}
This, together with \eqref{trilinear1}, \eqref{trilinear2} and \eqref{trilinear5}, leads to \eqref{trilinear++-}.
\end{proof}

\end{document}